\theoremstyle{definition}
	\newtheorem{definition}{Definition}[section]
	\numberwithin{definition}{section}
\theoremstyle{plain}
	\newtheorem{lemma}[definition]{Lemma}
	\newtheorem{proposition}[definition]{Proposition}
	\newtheorem{theorem}[definition]{Theorem}
	\newtheorem{corollary}[definition]{Corollary}
	\newtheorem*{claim*}{Claim}
\theoremstyle{remark}
	\newtheorem{remark}[definition]{Remark}
\renewcommand{\labelenumi}{(\roman{enumi})}
\DeclareMathOperator{\Aut}{Aut}
\DeclareMathOperator{\dop}{d}
\DeclareMathOperator{\dvol}{dvol}
\DeclareMathOperator{\volume}{Vol}
\DeclareMathOperator{\xop}{\times}
\DeclareMathOperator{\delbar}{\overline{\partial}}
\DeclareMathOperator{\del}{\partial}
\newcommand{\bs}{\boldsymbol}
\renewcommand{\rm}{\textrm}
\newcommand{\al}{\alpha}
\newcommand{\be}{\beta}
\newcommand{\ga}{\gamma}
\newcommand{\de}{\delta}
\newcommand{\De}{\Delta}
\newcommand{\eps}{\epsilon}
\newcommand{\veps}{\varepsilon}
\newcommand{\io}{\iota}
\newcommand{\lam}{\lambda}
\newcommand{\Lam}{\Lambda}
\newcommand{\om}{\omega}
\newcommand{\sig}{\sigma}
\newcommand{\Sig}{\Sigma}
\renewcommand{\th}{\theta}
\newcommand{\vphi}{\varphi}
\newcommand{\C}{\mathbb{C}}
\newcommand{\R}{\mathbb{R}}
\newcommand{\PSL}[2]{\rm{PSL}(#1,#2)}
\newcommand{\gfr}{\mathfrak{g}}
\newcommand{\id}{\rm{id}}
\newcommand{\proj}[1]{\rm{$\C P$}^{#1}}
\newcommand{\abs}[1]{\lvert#1\rvert}
\newcommand{\Abs}[1]{\left\lvert#1\right\rvert}
\newcommand{\norm}[1]{\lVert#1\rVert}
\newcommand{\Norm}[1]{\left\lVert#1\right\rVert}
\newcommand{\map}[3]{#1\colon#2\rightarrow#3}
\newcommand{\mapin}[3]{#1\colon#2\hookrightarrow#3}
\newcommand{\lto}{\longrightarrow}
\newcommand{\lmapsimeq}[3]{#1\colon#2\stackrel{\simeq}{\lto}#3}
\newcommand{\deq}{\mathrel{\mathop:}=}
\newcommand{\fall}[2]{\noindent Case #1: \textit{#2.}}
\renewcommand{\l}{\langle}
\renewcommand{\r}{\rangle}
\newcommand{\la}{\De}
\newcommand{\schritt}[1]{\noindent \textbf{Step #1} \,}
\newcommand{\rv}{0}
\newcommand{\pc}{\Sig_{\rv}}
\newcommand{\bub}{\Sig_{\al}}
\newcommand{\edge}[2]{#1E#2}
\newcommand{\sv}{A,\textbf{u},\textbf{z}}
\newcommand{\G}{\mathcal{G}}
\newcommand{\vtilde}{\tilde{v}}
\newcommand{\utilde}{\tilde{u}}
\newcommand{\Xtilde}{\widetilde{\X}}
\newcommand{\Jtilde}{\tilde{J}}
\newcommand{\X}{M}
\newcommand{\vnu}{A_{\nu},u_{\nu},\textbf{z}_{\nu}}
\newcommand{\A}{\mathcal{A}}
\newcommand{\loc}{\rm{loc}}
\newcommand{\hor}{\rm{hor}}
\newcommand{\ver}{\rm{vert}}
\newcommand{\Atilde}{\tilde{A}}
\newcommand{\vphitilde}{\widetilde{\vphi}}
\newcommand{\omtilde}{\widetilde{\om}}
\newcommand{\sigtilde}{\widetilde{\sig}}
\newcommand{\Ahat}{\hat{A}}
\newcommand{\uhat}{\hat{u}}
\begin{document}

\title[Gromov compactness for vortices]{Removal of singularities and Gromov compactness for symplectic vortices}

\author[A. Ott]{Andreas Ott}
\address{Centre for Mathematical Sciences, University of Cambridge, Wilberforce Road, Cambridge CB3 0WB, United Kingdom}
\email{a.ott@dpmms.cam.ac.uk}

\subjclass[2000]{37J15, 53D20, 53D45, 58J05}

\begin{abstract}
	We prove that the moduli space of gauge equivalence classes of symplectic vortices with uniformly bounded energy in a compact Hamiltonian manifold admits a Gromov compactification by polystable vortices. This extends results of Mundet i Riera for circle actions to the case of arbitrary compact Lie groups. Our argument relies on an a priori estimate for vortices that allows us to apply techniques used by McDuff and Salamon in their proof of Gromov compactness for pseudoholomorphic curves. As an intermediate result we prove a removable singularity theorem for symplectic vortices.
\end{abstract}

\maketitle

\tableofcontents

\section{Introduction and main results}
\label{SectionIntroductionAndMainResults}

For any compact Lie group $G$ and any Hamiltonian $G$-manifold $(\X,\om,\mu)$ with moment map $\map{\mu}{M}{\gfr^{\ast} \cong \gfr}$, Cieliebak, Gaio, Mundet i Riera, and Salamon \cite{Cieliebak/J-holomorphic-curves-moment-maps-and-invariants-of-Hamiltonian-group-actions, Cieliebak/The-symplectic-vortex-equations-and-invariants-of-Hamiltonian-group-actions, Gaio/Gromov-Witten-invariants-of-symplectic-quotients-and-adiabatic-limits} and Mundet i Riera and Tian \cite{Mundet-i-Riera/A-Hitchin-Kobayashi-correspondence-for-Kahler-fibrations, Mundet-i-Riera/Hamiltonian-Gromov-Witten-invariants, Mundet-i-Riera/A-compactification-of-the-moduli-space-of-twisted-holomorphic-maps} studied the \emph{symplectic vortex equations}
\begin{equation} \label{EquationVortexEquations}
	\delbar_{J,A}(u) = 0, \quad F_{A} + \mu(u) \dvol_{\Sig} = 0
\end{equation}
for pairs $(A,u)$, where $A$ is a connection on a fixed principal $G$-bundle $P$ over a compact Riemann surface $\Sig$ equipped with a fixed complex structure and a fixed area form $\dvol_{\Sig}$, $F_{A}$ denotes the curvature of $A$, $u$ is a $G$-equivariant map $P\to\X$, and $J$ is a $G$-invariant $\om$-compatible almost complex structure on $\X$. Solutions of these equations are called \emph{vortices} and may be regarded as gauge-theoretic deformations of $J$-holomorphic curves in $\X$. For a proper moment map $\mu$ and~$\X$ equivariantly convex at infinity, Cieliebak, Gaio, Mundet i Riera, and Salamon \cite{Cieliebak/The-symplectic-vortex-equations-and-invariants-of-Hamiltonian-group-actions} proved that the moduli space of gauge equivalence classes of vortices with uniformly bounded energy is compact under the additional assumption that~$\X$ is symplectically aspherical. The latter condition means that the symplectic form $\om$ vanishes on all spherical homology classes in~$\X$ and ensures that no bubbling off of spheres occurs. If this condition is dropped, the moduli space will in general no longer be compact, and the question arises as to whether it admits a compactification in a way similar to the Gromov compactification of the moduli space of pseudoholomorphic curves as in \cite{Gromov/Pseudoholomorphic-curves-in-symplectic-manifolds, Kontsevich/Enumeration-of-rational-curves-via-torus-actions, Hofer/Floer-homology-and-Novikov-rings, McDuff/J-holomorphic-curves-and-symplectic-topology, Parker/Pseudo-holomorphic-maps-and-bubble-trees, Ye/Gromovs-compactness-theorem-for-pseudo-holomorphic-curves, Hummel/Gromovs-compactness-theorem-for-pseudo-holomorphic-curves}. In the special case of $G=S^{1}$, a positive answer was first given by Mundet i Riera \cite{Mundet-i-Riera/Hamiltonian-Gromov-Witten-invariants} who constructed a Gromov compactification for fixed complex structure on~$\Sig$, using the compactness results for pseudoholomorphic curves of Ivashkovich and Shevchishin \cite{Ivashkovich/Gromov-compactness-theorem-for-J-complex-curves-with-boundary}. Later Mundet i Riera and Tian \cite{Mundet-i-Riera/A-compactification-of-the-moduli-space-of-twisted-holomorphic-maps} established a compactification in the case of $G=S^{1}$ also for varying complex structure on~$\Sig$.

The goal of this article is to construct a Gromov compactification of the moduli space of vortices for all compact Lie groups $G$ and for fixed complex structure on the Riemann surface~$\Sig$, see Theorem~\ref{TheoremGromovCompactness} below, combining methods from symplectic geometry and gauge theory. A feature of our approach is that we do not appeal to \cite{Ivashkovich/Gromov-compactness-theorem-for-J-complex-curves-with-boundary}; rather, we apply the techniques that were used by McDuff and Salamon \cite{McDuff/J-holomorphic-curves-and-symplectic-topology} to construct a Gromov compactification for the moduli space of pseudoholomorphic curves. Our result crucially relies on a removable singularity theorem for vortices on the punctured disk, see Theorem~\ref{TheoremRemovalOfSingularities} below.

The Gromov compactification we shall construct plays a central role in the definition of gauged Gromov-Witten invariants for Hamiltonian $G$-manifolds. More specifically, it is used in the algebro-geometric approach to gauged Gromov-Witten theory due to Gonz\'alez and Woodward \cite{Gonzalez/Area-dependence-in-gauged-Gromov-Witten-theory} in order to define such invariants for smooth projective $G$-varieties. Moreover, building on the results of the present paper, the author constructed a Gromov compactification for the moduli space of solutions of the non-local symplectic vortex equations in order to define gauged Gromov-Witten invariants for monotone Hamiltonian $G$-manifolds, see \cite{Ott/The-non-local-symplectic-vortex-equations-and-gauged-Gromov-Witten-invariants}. The present article grew out of a joint project with E.\,Gonz\'alez, C.\,Woodward, and F.\,Ziltener \cite{Gonzalez/Symplectic-vortices-with-fixed-holonomy-at-infinity} that carries these ideas further to the study of vortices with fixed holonomy on punctured Riemann surfaces, with the goal of defining the corresponding invariants. As another application, our result enters into the compactification of the moduli space of vortices on the affine line, which constitutes an intermediate step in the definition of the quantum Kirwan map in gauged Gromov-Witten theory due to Ziltener \cite{Ziltener/Symplectic-vortices-on-the-complex-plane-and-quantum-cohomology, Ziltener/A-quantum-Kirwan-map:-Bubbling-and-Fredholm-theory-for-symplectic-vortices-over-the-plane}, Nguyen, Woodward, and Ziltener~\cite{Nguyen/Morphisms-of-cohomological-field-theory-algebras-and-quantization-of-the-Kirwan-map} and Woodward \cite{Woodward/Quantum-Kirwan-morphism-and-Gromov-Witten-invariants-of-quotients}. Our approach conjecturally admits an extension so as to cover symplectic vortices with Lagrangian boundary conditions, in which case disk bubbling may also occur; this would allow for a generalization of the gauged Lagrangian Floer theory of Frauenfelder \cite{Frauenfelder/Floer-homology-of-symplectic-quotients-and-the-Arnold-Givental-conjecture}, see Woodward \cite{Woodward/Gauged-Floer-theory-of-toric-moment-fibers}.

\smallskip

We now state the two main theorems of this article. Let $G$ be a compact connected Lie group, with Lie algebra denoted by $\gfr$, and let $(\X,\om,\mu)$ be a closed Hamiltonian $G$-manifold. Explicitly, this means that $\X$ is a compact smooth $G$-manifold without boundary equipped with a $G$-invariant symplectic form $\om$ and a smooth $G$-equivariant moment map $\map{\mu}{\X}{\gfr^{\ast} \cong \gfr}$ such that the identity
\[
\io(X_{\xi})\,\om = \dop \l\mu,\xi\r_{\gfr}
\]
holds for every $\xi \in \gfr$, where $X_{\xi}$ denotes the fundamental vector field of the infinitesimal action of $\xi$ on $\X$ that is induced by the $G$-action. Here we identify the Lie algebra $\gfr$ with its dual $\gfr^{\ast}$ by means of some fixed invariant inner product $\l\cdot,\cdot\r_{\gfr}$ on $\gfr$. We further fix a smooth $G$-invariant $\om$-compatible almost complex structure $J$ on $\X$, which defines a $G$-invariant Riemannian metric $\l\cdot,\cdot\r_{J} \deq \om(\cdot,J\cdot)$ on $\X$. We refer to \cite{Cieliebak/The-symplectic-vortex-equations-and-invariants-of-Hamiltonian-group-actions, Cieliebak/J-holomorphic-curves-moment-maps-and-invariants-of-Hamiltonian-group-actions} for the details.

\smallskip

Our first result is a removable singularity theorem for vortices on the punctured disk. We begin by recalling from \cite{Cieliebak/The-symplectic-vortex-equations-and-invariants-of-Hamiltonian-group-actions} the definition of the symplectic vortex equations in local coordinates. Let \mbox{$D \subset \C$ }be an open subset and write the complex coordinate as $s + it$. Fix a smooth function $\map{\lam}{D}{(0,\infty)}$. The \emph{symplectic vortex equations on $D$} are the system of nonlinear partial differential equations
\begin{equation} \label{EquationLocalVortexEquations}
\begin{split}
	\del_{s}\!u + X_{\Phi}(u) + J \bigl( \del_{t}\!u + X_{\Psi}(u) \bigr) &= 0, \\
	\del_{s}\!\Psi - \del_{t}\!\Phi + [\Phi,\Psi] + \lam^{2} \cdot \mu(u) &= 0,
\end{split}
\end{equation}
where $\map{\Phi,\Psi}{D}{\gfr}$ and $\map{u}{D}{\X}$ are smooth maps. A triple $(\Phi,\Psi,u)$ that solves equations~(\ref{EquationLocalVortexEquations}) will be called a \emph{vortex on $D$}. Its \emph{Yang-Mills-Higgs energy} is defined by
\begin{eqnarray} \label{EquationYMHEnergyLocal}
	E(\Phi,\Psi,u;D) \deq \int_{D} e(\Phi,\Psi,u),
\end{eqnarray}
where 
\begin{eqnarray} \label{EquationYMHEnergyDensityLocal}
	e(\Phi,\Psi,u) \deq \Abs{\del_{s}\!u + X_{\Phi}(u)}_{J}^{2} + \lam^{2} \cdot \Abs{\mu(u)}_{\gfr}^{2}
\end{eqnarray}
denotes the \emph{Yang-Mills-Higgs energy density}. Here the norms are understood with respect to the metric $\l\cdot,\cdot\r_{J}$ on $\X$ and the inner product $\l\cdot,\cdot\r_{\gfr}$ on $\gfr$, respectively. Note that this energy may be infinite.

\smallskip

We are now ready to state our first theorem. Let $B \subset \C$ denote the closed unit disk. Fix a smooth function $\map{\lam}{B}{(0,\infty)}$ and consider the vortex equations (\ref{EquationLocalVortexEquations}) on the punctured disk~$B \setminus \{0\}$.

\begin{theorem}[Removal of singularities] \label{TheoremRemovalOfSingularities}
	Let $(\Phi,\Psi,u)$ be a smooth vortex on the punctured disk $B \setminus \{0\}$, and assume that the following holds.
\begin{enumerate}[itemsep=0.5ex, itemindent=0cm, labelsep=1ex, leftmargin=1cm]
	\item $\Phi$ and $\Psi$ extend continuously to all of $B$.
	\item $(\Phi,\Psi,u)$ has finite Yang-Mills-Higgs energy $E(\Phi,\Psi,u;B) < \infty$.
\end{enumerate}
Then $u$ is of Sobolev class $W^{1,p}$ on $B$ for every real number $p>2$.
\end{theorem}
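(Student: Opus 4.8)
The plan is to regard the first vortex equation as a perturbation of the pseudoholomorphic curve equation whose inhomogeneity is controlled by the connection, and to run the removal-of-singularities argument of McDuff and Salamon \cite{McDuff/J-holomorphic-curves-and-symplectic-topology}. Writing $\nabla_s u \deq \del_s u + X_\Phi(u)$ and $\nabla_t u \deq \del_t u + X_\Psi(u)$, the first equation becomes $\nabla_s u + J\,\nabla_t u = 0$, so that $\abs{\nabla_s u}_J = \abs{\nabla_t u}_J$ and the finite-energy hypothesis (ii) bounds $\int_{B}(\abs{\nabla_s u}_J^2 + \abs{\nabla_t u}_J^2)$. By hypothesis (i) the functions $\Phi,\Psi$ are bounded near the puncture, and since $\X$ is compact the vector fields $X_\Phi(u),X_\Psi(u)$ are bounded; hence $\del_s u,\del_t u \in L^2(B)$, and the bounded map $u$ has an $L^2$ gradient on $B$. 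The argument then splits into two parts: first I would show that $u$ extends continuously across $0$, and then bootstrap this to $W^{1,p}$ regularity.

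For the continuity, the key analytic input is an a priori mean-value inequality for the energy density $e \deq \abs{\nabla_s u}_J^2 + \lam^2\abs{\mu(u)}_\gfr^2$: there are constants $\hbar,c>0$ so that whenever the energy of $(\Phi,\Psi,u)$ over a disc $B_r(z_0) \subset B\setminus\{0\}$ is less than $\hbar$, one has $e(z_0) \le (c/r^2)\int_{B_r(z_0)} e\,\dop s\wedge\dop t$. I would obtain this from a Bochner--Weitzenb\"ock type differential inequality $\Delta e \ge -C(e+e^2)$, in which the connection enters through $\Phi,\Psi$ and their first derivatives; the latter are controlled by the second vortex equation together with the boundedness of $\Phi,\Psi$, after which the Heinz trick applies verbatim. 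Granting this, finite total energy forces the energy over the dyadic annuli $\{2^{-k-1}\le\abs{z}\le 2^{-k}\}$ to tend to zero. One then uses the gauged energy--action identity for vortices, whose boundary term is the length of the loop $\theta\mapsto u(r\mathrm{e}^{\mathrm{i}\theta})$ and whose bulk correction comes from the $\lam^2\mu(u)$ term, together with the isoperimetric inequality in $\X$, to derive a differential inequality forcing $\int_{B_r} e$ to decay like a power $r^{2\delta}$ with $\delta>0$. This power decay makes the lengths of the image loops summable, so that the loops form a Cauchy family and $u$ extends to a continuous map on $B$ with $u(0)=x_0$ for some $x_0\in\X$; in particular $u\in W^{1,2}(B)$, the isolated singularity being removable.

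For the regularity upgrade I would work in a coordinate chart on $\X$ about $x_0$, in which, by continuity, $u$ has small oscillation on a small disc $B_\rho$. Freezing the almost complex structure at the constant value $J_0\deq J(0)\vert_{x_0}$, the first vortex equation reads $\delbar_{J_0} u = g + (J_0-J)\,\del_t u$, where $g\deq -X_\Phi(u) - J\,X_\Psi(u)$ lies in $L^\infty\subset L^p$ for every $p$, while the perturbation $(J_0-J)\,\del_t u$ has small $L^2$-norm since $J-J_0$ is small on $B_\rho$ by continuity of $u$ and of the family $J$. A Calder\'on--Zygmund estimate for the Cauchy--Riemann operator, with the smallness used to absorb the perturbation onto the left-hand side, then yields $u\in W^{1,p}(B_\rho)$ for every $p>2$; combined with interior regularity on $B\setminus\{0\}$ this gives $u\in W^{1,p}(B)$.

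The main obstacle is the first part, namely the mean-value inequality and the resulting continuous extension, since this is where the gauge-theoretic nature of the problem genuinely enters: both the Bochner inequality and the energy--action identity acquire connection and moment-map terms that are absent in the pseudoholomorphic case. It is precisely hypothesis (i)---continuity, hence boundedness, of $\Phi$ and $\Psi$---that renders these terms lower order and lets the McDuff--Salamon machinery go through; the same hypothesis also accounts for the conclusion being merely $W^{1,p}$ rather than smoothness, as the inhomogeneity $g$ inherits no more regularity than that of $\Phi$ and $\Psi$.
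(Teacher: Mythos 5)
Your overall strategy --- a priori mean value inequality, isoperimetric inequality, power decay of the local energy, continuity at the puncture, then integrability of the derivative --- matches the paper's, but you deviate at two points, one of which leaves a genuine gap. On the first deviation: the paper does not run the isoperimetric argument for $u$ in $\X$ directly. It passes to the graph $\utilde(z)=(z,u(z))$ in $B\times\X$, equips this with a connection-twisted almost complex structure $\Jtilde$ and a split symplectic form $\omtilde$ taming it, and observes that $\utilde$ is genuinely $\Jtilde$-holomorphic; the energy identity $E(\utilde;B_r\setminus B_\rho)=\int\utilde^{\ast}\omtilde$ is then exact, and the McDuff--Salamon isoperimetric machinery applies verbatim (after checking it tolerates the merely continuous $\Jtilde$). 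Your ``gauged energy--action identity'' on $\X$ is workable but picks up a boundary term $\oint\l\mu(u),A\r$ of size $O(r)$ and a bulk term $O(r^2)$; these cap the decay of $\int_{B_r}e$ at $O\bigl(r^{\min(2\al,1)}\bigr)$ and hence the integrability of $\dop u$ obtained from the mean value inequality at $p<4$, whereas the paper's exact identity lets one take the isoperimetric constant arbitrarily close to $1/4\pi$, push $\al$ close to $1$, and reach every $p>2$ in one step.

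The gap is in your final upgrade from ``$u$ continuous and $\dop u\in L^2$'' to $\dop u\in L^p$, $p>2$, via Calder\'on--Zygmund with absorption. The perturbation $(J_0-J)\,\del_t u$ is small in $L^2$ but is only known to \emph{lie} in $L^2$, so absorbing it on the left of an $L^p$ estimate proves nothing until you already know $\dop u\in L^p$: the a priori inequality is vacuous when its left-hand side may be infinite. To close this you need either the higher-integrability theory for first-order elliptic systems with continuous coefficients (invertibility of the Beltrami-type operator on $L^p$ under small oscillation), or --- more in the spirit of what you already have --- you should simply combine your mean value inequality with your power decay to get the pointwise bound $\abs{\dop u(z)}\le C\abs{z}^{\al-1}+C$ and read off $\dop u\in L^p$ for $p<2/(1-\al)$, which is exactly what the paper does (taking $\al$ close enough to $1$ for the given $p$). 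A smaller inaccuracy: the second vortex equation controls only the curvature $\del_s\Psi-\del_t\Phi+[\Phi,\Psi]$, not the individual first derivatives of $\Phi,\Psi$; the paper's Bochner inequality needs Uhlenbeck gauge-fixing to bound the connection. Since you invoke the a priori estimate as a black box this does not affect your argument for the theorem itself, but the stated justification is not the one that works.
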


The reader is referred to \cite[App.\,B]{Wehrheim/Uhlenbeck-compactness} for the definition of Sobolev spaces of maps into $\X$. We will prove Theorem~\ref{TheoremRemovalOfSingularities} in Section~\ref{SectionRemovalOfSingularities}; it will later play a crucial role in the proof of Gromov compactness for vortices. We now introduce some notation in order to state the main result, Theorem~\ref{TheoremGromovCompactness} below, which establishes Gromov compactness for symplectic vortices.

\smallskip

To begin with, we recall from \cite{Cieliebak/The-symplectic-vortex-equations-and-invariants-of-Hamiltonian-group-actions} the definition of the symplectic vortex equations on Riemann surfaces. Let $\Sig$ be a compact Riemann surface without boundary, that is endowed with a fixed complex structure $j_{\Sig}$ and a fixed area form~$\dvol_{\Sig}$, and denote the corresponding K\"ahler metric by $\l\cdot,\cdot\r_{\Sig} \deq \dvol_{\Sig}(\,\cdot, j_{\Sig}\,\cdot)$. Let $\map{\pi}{P}{\Sig}$ be a smooth principal $G$-bundle over $\Sig$. We shall write $\A(P)$ for the space of smooth connections on $P$ and $C^{\infty}(P,\X)^{G}$ for the space of smooth $G$-equivariant maps $P \to \X$ (see \cite[App.\,A]{Wehrheim/Uhlenbeck-compactness} for basic definitions in gauge theory). For any pair $(A,u) \in \A(P) \xop C^{\infty}(P,\X)^{G}$ we denote by
\begin{eqnarray} \label{EquationTwistedDerivative}
	\dop_{A}\!u \deq \dop\!u + X_{A}(u)
\end{eqnarray}
the twisted derivative of $u$, and define the corresponding nonlinear Cauchy-Riemann operator by
\[
\delbar_{J,A}(u) \deq \frac{1}{2} \, \bigl( \dop_{A}\!u + J(u) \circ \dop_{A}\!u \circ j_{\Sig} \bigr).
\]
The symplectic vortex equations on the Riemann surface $\Sig$ are the system of nonlinear partial differential equations (\ref{EquationVortexEquations}), i.\,e.\,
\[
\delbar_{J,A}(u) = 0, \quad F_{A} + \mu(u) \dvol_{\Sig} = 0,
\]
for pairs $(A,u) \in \A(P) \xop C^{\infty}(P,\X)^{G}$, where
\[
F_{A} \deq \dop\!A + \frac{1}{2} [A \wedge A]
\]
denotes the curvature of $A$. Its solutions are called vortices. We define the \emph{Yang-Mills-Higgs energy} of a vortex $(A,u)$ on an open subset $U \subset \Sig$ by
\begin{eqnarray} \label{EquationYMHEnergyGlobal}
	E(A,u;U) \deq \int_{U} \left( \frac{1}{2} \Abs{\dop_{A}\!u}_{J}^{2} + \Abs{\mu(u)}_{\gfr}^{2} \right) \dvol_{\Sig},
\end{eqnarray}
and write $E(A,u)$ for the Yang-Mills-Higgs energy of $(A,u)$ on $\Sig$. Here the norm $\abs{\,\cdot\,}_{J}$ is understood with respect to the metric $\l\cdot,\cdot\r_{J}$ on $\X$ and the metric $\l\cdot,\cdot\r_{\Sig}$ on $\Sig$ (see \cite[Sec.\,2.2]{McDuff/J-holomorphic-curves-and-symplectic-topology} for details), while the norm $\abs{\,\cdot\,}_{\gfr}$ is understood with respect to the inner product $\l\cdot,\cdot\r_{\gfr}$ on $\gfr$.

\begin{remark} \label{RemarkLocalFromGlobal}
	Equations (\ref{EquationLocalVortexEquations}) are a local version of equations (\ref{EquationVortexEquations}) in the following sense. Let $D \subset \C$ be an open subset of $\C$, and let $\map{\vphi}{D}{\Sig}$ be a holomorphic chart with a lift $\map{\vphitilde}{D}{P}$ that locally trivializes the bundle~$P$. A vortex $(A,u)$ determines a smooth triple $(\Phi,\Psi,u^{\loc})$ on $D$ by
\[
\vphitilde^{\,\ast}A = \Phi \dop\!s + \Psi \dop\!t \quad \text{and} \quad u^{\loc} = u \circ \vphitilde,
\]
and the area form $\dvol_{\Sig}$ gives rise to a smooth function $\map{\lam}{D}{(0,\infty)}$ by
\[
\vphi^{\ast} \dvol_{\Sig} = \lam^{2} \, \dop\!s \wedge \dop\!t.
\]
A short calculation now shows that the triple $(\Phi,\Psi,u^{\loc})$ satisfies the vortex equations
\[
\begin{split}
	\del_{s}\!u^{\loc} + X_{\Phi}\bigl( u^{\loc} \bigr) + J \bigl( \del_{t}\!u^{\loc} + X_{\Psi}\bigl( u^{\loc} \bigr) \bigr) &= 0, \\
	\del_{s}\!\Psi - \del_{t}\!\Phi + [\Phi,\Psi] + \lam^{2} \cdot \mu\bigl( u^{\loc} \bigr) &= 0
\end{split}
\]
on $D$ (see \cite[Prop.\,2.2]{Cieliebak/The-symplectic-vortex-equations-and-invariants-of-Hamiltonian-group-actions} for details). Moreover, the Yang-Mills-Higgs energy density (\ref{EquationYMHEnergyDensityLocal}) of the vortex $(\Phi,\Psi,u^{\loc})$ can be expressed in terms of $(A,u)$ by the identity
\begin{equation} \label{EqualityEnergyDensityLocalVsGlobal}
	\Abs{\del_{s}\!u^{\loc} + X_{\Phi}\bigl( u^{\loc} \bigr)}_{J}^{2} + \lam^{2} \cdot \Abs{\mu\bigl( u^{\loc} \bigr)}_{\gfr}^{2} = \left( \frac{1}{2} \Abs{\dop_{A}\!u \circ \vphitilde}_{J}^{2} + \Abs{\mu(u \circ \vphitilde)}_{\gfr}^{2} \right) \cdot \lam^{2};
\end{equation}
the corresponding Yang-Mills-Higgs energies are then related by
\begin{eqnarray} \label{EquationEnergiesLocalVsGlobal}
	E\bigl( \Phi,\Psi,u^{\loc};D \bigr) = E\bigl( A,u;\vphi(D) \bigr).
\end{eqnarray}
This justifies the ad hoc definitions at the beginning of this section.
\end{remark}

Next we introduce polystable vortices and give a definition of Gromov convergence for sequences of vortices. Our definitions are inspired by similar definitions due to Mundet i Riera \cite{Mundet-i-Riera/Hamiltonian-Gromov-Witten-invariants}, Gonz\'alez and Woodward \cite{Gonzalez/Area-dependence-in-gauged-Gromov-Witten-theory}, Ziltener \cite{Ziltener/A-quantum-Kirwan-map:-Bubbling-and-Fredholm-theory-for-symplectic-vortices-over-the-plane}, and McDuff and Salamon \cite{McDuff/J-holomorphic-curves-and-symplectic-topology}.

\smallskip

We begin by recalling some basic facts about trees and nodal curves from \cite[App.\,D.2 and Sec.\,5.1]{McDuff/J-holomorphic-curves-and-symplectic-topology}, slightly modifying the notation and terminology. A \emph{tree} is a connected graph without cycles. We denote it by $(V,E)$, where $V$ is a finite set of vertices and $E \subset V \xop V$ is the edge relation. A rooted tree is a tree $(V,E)$ which has a distinguished \emph{root vertex} $\rv\in V$. We will indicate this in the notation by writing the set of vertices $V$ as a disjoint union $V = \{\rv\} \sqcup V_{S}$. The elements of $V_{S}$ are called \emph{spherical vertices}. Note that $V_{S}$ may be empty. Let $n$ be a nonnegative integer. An \emph{n-labeled tree} is a triple $T=(V,E,\Lam)$ consisting of a rooted tree $(V = \{\rv\} \sqcup V_{S},E)$ and a labeling
\[
\map{\Lam}{\{1,\ldots,n\}}{V}, \quad i \mapsto \al_{i}.
\]
Given an $n$-labeled tree $T=(V=\{\rv\} \sqcup V_{S},E,\Lam)$, by a \emph{normalized nodal curve of combinatorial type $T$} we mean a tuple
\[
(\bs{\Sig},\textbf{z}) \deq \bigl( \{\bub\}_{\al\,\in\,V},\{z_{\al\be}\}_{\edge{\al}{\be}}, \{\al_{i},z_{i}\}_{1 \le i \le n} \bigr),
\]
often just written as
\[
\textbf{z} = \bigl( \{z_{\al\be}\}_{\edge{\al}{\be}}, \{\al_{i},z_{i}\}_{1 \le i \le n} \bigr),
\]
consisting of a compact Riemann surface $\pc$, called the \emph{principal component} associated to the root vertex $\rv$, a \emph{spherical component} $\bub \deq \proj{1}$ for every spherical vertex $\al\in V_{S}$, nodal points $z_{\al\be} \in \bub$ labeled by the directed edges $\edge{\al}{\be}$ of $T$, and~$n$ distinct marked points $z_{i} \in \Sig_{\al_{i}}$, $i = 1,\ldots,n$, such that for every $\al \in V$ the points $z_{\al\be}$ for $\edge{\al}{\be}$ and $z_{i}$ for $\al_{i} = \al$ are pairwise distinct. We denote the set of nodal points on the component $\bub$, $\al\in V$, by
\[
Z_{\al} \deq \big\{ z_{\al\be} \,\big|\, \edge{\al}{\be} \big\},
\]
and we define the set of special points on $\bub$ by
\[
Y_{\al} \deq Z_{\al} \cup \big\{ z_{i} \,\big|\, \al_{i}=\al \big\}.
\]
For any two vertices $\al,\be\in V$ not connected by an edge, we denote by $z_{\al\be}$ the unique nodal point on $\bub$ corresponding to the first edge on the chain of edges running from $\al$ to $\be$. Moreover, we define the point~$z_{\rv i}$ on the principal component $\pc$ to be
\[
z_{\rv i} \deq	\begin{cases}	z_{i}				&\text{ if $\al_{i}=\rv$;} \\
								z_{\rv \al_{i}}		&\text{ if $\al_{i}\in V_{S}$.}
				\end{cases}
\]
In other words, if $z_{i}$ lies on a spherical component then $z_{\rv i}$ is the unique nodal point on the principal component at which the bubble tree containing~$z_{i}$ is attached; otherwise, i.\,e., if $z_{i}$ lies on the principal component, then $z_{\rv i}$ coincides with $z_{i}$.

\smallskip

Let $P(\X) \deq P \xop_{G} \X$ be the symplectic fiber bundle over $\Sig$ that is associated to the $G$-bundle $P \to \Sig$ and the $G$-manifold $\X$. The points on $P(\X)$ will be denoted by $[p,x]$, for $p\in P$ and $x \in \X$.

\begin{remark} \label{RemarkMapVersusSection}
	Note that we may equivalently think of a $G$-equivariant map $\map{u}{P}{\X}$ as a section $\map{u}{\Sig}{P(\X)}$. In fact, this section is defined by
\[
\Sig \ni z \mapsto \bigl[ p,u(p) \bigr] \in P(\X), \quad \pi(p) = z,
\]
where $\map{\pi}{P}{\Sig}$ denotes the bundle projection. We will usually not distinguish between these two viewpoints in the notation and switch freely from one to the other, depending on the situation.
\end{remark}

\begin{definition}[Polystable vortices] \label{DefinitionPolystableVortex}
	Let $n$ be a nonnegative integer, and let $T=(V=\{\rv\} \sqcup V_{S},E,\Lam)$ be an $n$-labeled tree. A \emph{polystable vortex of combinatorial type $T$} is a tuple
\[
(\sv) \deq \bigl( (A,u_{\rv}),\{u_{\al}\}_{\al\,\in\,V_{S}},\{z_{\al\be}\}_{\edge{\al}{\be}}, \{\al_{i},z_{i}\}_{1 \le i \le n} \bigr)
\]
consisting of
\begin{itemize}[itemsep=0.3ex, itemindent=0cm, labelsep=0.3cm, leftmargin=0.7cm]
	\item a normalized nodal curve $\bigl( \{\bub\}_{\al\,\in\,V},\{z_{\al\be}\}_{\edge{\al}{\be}}, \{\al_{i},z_{i}\}_{1 \le i \le n} \bigr)$ of combinatorial type $T$ with principal component $\pc \deq \Sig$;
	\item a vortex $(A,u_{\rv})$ on the principal component $\pc$;
	\item a $J$-holomorphic sphere $\map{u_{\al}}{\bub}{P(\X)_{z_{\rv\al}} \cong \X}$ in the fiber of $P(\X)$ over the nodal point $z_{\rv\al} \in \pc$, for every $\al\in V_{S}$
\end{itemize}
such that the following conditions are satisfied.
\begin{description}[itemsep=1ex, itemindent=0cm, labelsep=0.5cm, leftmargin=0.7cm]
	\item[(Connectedness)] $u_{\al}(z_{\al\be}) = u_{\be}(z_{\be\al})$ for all $\al,\be\in V$ such that $\edge{\al}{\be}$.
	\item[(Polystability)] $\abs{Y_{\al}}\ge3$ for all $\al\in V_{S}$ such that $u_{\al}$ is constant.
\end{description}
\end{definition}

\begin{remark}
	To understand the meaning of the (Connectedness) condition in the case $\al=\rv$, we think of the $G$-equivariant map $\map{u_{\rv}}{P}{\X}$ as a section $\map{u_{\rv}}{\Sig}{P(\X)}$ as explained in Remark \ref{RemarkMapVersusSection}; this condition then says that $u_{\rv}(z_{\rv\be}) = u_{\be}(z_{\be\rv})$ in the fiber of $P(\X)$ over the nodal point~$z_{\rv\be} \in \pc$.
\end{remark}

Given a polystable vortex $(\sv)$ of combinatorial type $T=(V=\{\rv\} \sqcup V_{S},E,\Lam)$, we define its \emph{energy} to be
\[
E(A,\textbf{u}) \deq E(A,u_{\rv}) + \sum_{\al\in V_{S}} E(u_{\al}),
\]
where $E(A,u_{\rv})$ is the Yang-Mills-Higgs energy of the vortex $(A,u_{\rv})$ and $E(u_{\al})$ denotes the energy of the $J$-holomorphic curve $\map{u_{\al}}{\bub}{P(\X)_{z_{\rv\al}}}$ (see \cite[Sec.\,2.2]{McDuff/J-holomorphic-curves-and-symplectic-topology}). As a special case of Definition~\ref{DefinitionPolystableVortex}, by an \emph{$n$-marked vortex} we mean a tuple
\[
(A,u,\textbf{z}) = (A,u,z_{1}, \ldots, z_{n})
\]
consisting of a vortex $(A,u)$ and a sequence $z_{1}, \ldots, z_{n}$ of $n$ distinct marked points on $\Sig$.

The group $\G(P) \deq C^{\infty}(P,G)^{G}$ of smooth gauge transformations of $P$ acts on the space $\A(P) \xop C^{\infty}(P,\X)^{G}$ from the right by
\begin{eqnarray} \label{EquationActionOfGaugeTransformation}
	g^{\ast}(A,u) \deq \bigl( g^{-1}Ag + g^{-1}\dop\!g,g^{-1}u \bigr)
\end{eqnarray}
Note that the vortex equations (\ref{EquationVortexEquations}) and the Yang-Mills-Higgs energy (\ref{EquationYMHEnergyGlobal}) are invariant under this action.

\smallskip

For $z_{0} \in \Sig$ and $r > 0$, we denote by $B_{r}(z_{0}) \deq \{ z\in \Sig \,\big|\, \Abs{z - z_{0}} \le r \}$ the closed disk in $\Sig$ of radius $r$ centered at the point $z_{0}$, understood with respect to the metric $\l\cdot,\cdot\r_{\Sig}$. Let $B \subset \C$ be the closed unit disk, and fix an identification $\proj{1} \cong \C \cup \{\infty\}$. The next definition builds on the definition of Gromov convergence for pseudoholomorphic curves due to McDuff and Salamon \cite[Def.\,5.2.1]{McDuff/J-holomorphic-curves-and-symplectic-topology}.

\begin{definition}[Gromov convergence] \label{DefinitionGromovConvergence}
	Let $n$ be a nonnegative integer, and let $T=(V=\{\rv\} \sqcup V_{S},E,\Lam)$ be an $n$-labeled tree. A sequence of $n$-marked vortices
\[
(A_{\nu},u_{\nu},\textbf{z}_{\nu}) = (A_{\nu},u_{\nu},z^{\nu}_{1},\ldots,z^{\nu}_{n})
\]
is said to \emph{Gromov converge} to a polystable vortex of combinatorial type $T$
\[
(\sv) = \bigl( (A,u_{\rv}),\{u_{\al}\}_{\al\,\in\,V_{S}},\{z_{\al\be}\}_{\edge{\al}{\be}}, \{\al_{i},z_{i}\}_{1 \le i \le n} \bigr)
\]
if there exist
\begin{itemize}[itemsep=0.3ex, itemindent=0cm, labelsep=0.3cm, leftmargin=0.7cm]
	\item a sequence of smooth gauge transformations $g_{\nu} \in \G(P)$;
	\item a sufficiently small number $r>0$ such that the following holds: For every nodal point \mbox{$z_{\rv\al} \in Z_{\rv}$}, where $\al \in V_{S}$, there exists a holomorphic chart $\map{\vphi_{z_{\rv\al}}}{B}{B_{r}(z_{\rv\al})}$ such that $\vphi_{z_{\rv\al}}(0)=z_{\rv\al}$ and $B_{r}(z_{\rv\al}) \cap Z_{\rv} = \{z_{\rv\al}\}$;
	\item a sequence of M\"obius transformations $\phi^{\nu}_{\al} \in \Aut(\bub) \cong \PSL{2}{\C}$ for every $\al\in V_{S}$
\end{itemize}
such that the following holds.
\begin{description}[topsep=1ex, itemsep=1ex, itemindent=0cm, labelsep=0.3cm, leftmargin=0.7cm]
	\item[(Map)] The sequence
\[
\Bigl( g_{\nu}^{\ast} A_{\nu}, g_{\nu}^{-1} u_{\nu}, \bigl\{ (g_{\nu}^{-1}u_{\nu}) \circ \vphi_{z_{\rv\al}} \circ \phi^{\nu}_{\al} \bigr\}_{\al\,\in\,V_{S}} \Bigr)
\]
converges to
\[
\bigl( A,u_{\rv},\{u_{\al}\}_{\al\,\in\,V_{S}} \bigr)
\]
in the following sense.
\begin{enumerate}[topsep=1ex, itemsep=0.3ex, itemindent=0cm, labelsep=1ex, leftmargin=0.8cm]
	\item The sequence $g_{\nu}^{\ast}A_{\nu}$ converges to $A$ in $C^{0}$ on $\pc$.
	\item The sequence $(g_{\nu}^{\ast}A_{\nu},g_{\nu}^{-1} u_{\nu})$ converges to $(A,u_{\rv})$ in $C^{\infty}$ on compact subsets of $\pc \setminus Z_{\rv}$.
	\item For every $\al \in V_{S}$ the sequence $u^{\nu}_{\al} \deq ( g_{\nu}^{-1} u_{\nu} ) \circ \vphi_{z_{\rv\al}} \circ \phi^{\nu}_{\al}$ converges to $\map{u_{\al}}{\bub}{P(\X)_{z_{\rv\al}}}$ in $C^{1}$ on compact subsets of $\bub \setminus Z_{\al}$.
\end{enumerate}
	\item[(Energy)] The sequence $E(A_{\nu},u_{\nu})$ converges to $E(A,\textbf{u})$.
	\item[(Rescaling)] The sequence $\{ \phi^{\nu}_{\al} \}_{\al\in V_{S}}$ converges in the following sense.
\begin{enumerate}[topsep=1ex, itemsep=0.3ex, itemindent=0cm, labelsep=0.3cm, leftmargin=0.8cm]
	\item For every $\al \in V_{S}$ the sequence $\phi^{\nu}_{\al}$ converges to $0$ in $C^{\infty}$ on compact subsets of \mbox{$\proj{1} \setminus \{\infty\} \cong \C$}.
	\item If $\al,\be \in V_{S}$ are such that $\edge{\al}{\be}$ then the sequence $\phi^{\nu}_{\al\be} \deq (\phi^{\nu}_{\al})^{-1} \circ \phi^{\nu}_{\be}$ converges to $z_{\al\be}$ in $C^{\infty}$ on compact subsets of $\Sig_{\be} \setminus \{z_{\be\al}\}$.
\end{enumerate}
	\item[(Marked point)] For $i=1,\ldots,n$ the sequence of marked points $z_{i}^{\nu}$ converges in the following sense.
\begin{enumerate}[topsep=1ex, itemsep=0.3ex, itemindent=0cm, labelsep=0.3cm, leftmargin=0.8cm]
	\item If $\al_{i}=\rv$ then the sequence $z_{i}^{\nu}$ converges to $z_{i}$ in $\pc$.
	\item If $\al_{i}\in V_{S}$ then the sequence $(\vphi_{z_{\rv\al_{i}}} \circ \phi^{\nu}_{\al_{i}})^{-1}(z^{\nu}_{i})$ converges to $z_{i}$ in~$\Sig_{\al_{i}}$.
\end{enumerate}
\end{description}
\end{definition}

\begin{remark}
	To better understand how the M\"obius transformations $\phi^{\nu}_{\al}$ are used in Definition~\ref{DefinitionGromovConvergence}, we first recall that all spherical components $\bub$, $\al\in V_{S}$, are by definition just copies of the projective line $\proj{1}$, and that we have fixed an identification $\proj{1} \cong \C \cup \{\infty\}$. In (Map,\,iii), for large $\nu$ we may therefore think of the $\phi^{\nu}_{\al}$ as holomorphic maps $\map{\phi^{\nu}_{\al}}{B}{B}$, which are well-defined by (Rescaling,\,i). Likewise, in (Rescaling,\,ii) one should think of the~$\phi^{\nu}_{\al\be}$ as transformations of the projective line, and consider the nodal points~$z_{\al\be}$ and~$z_{\be\al}$ as lying thereon. Furthermore, we remind the reader that in (Map) we think of the maps $g_{\nu}^{-1} u_{\nu}$ as sections of the bundle $P(\X) = P \xop_{G} \X$ over~$\pc$ as in Remark \ref{RemarkMapVersusSection}.
\end{remark}

We are now in a position to state the main result of this article. The proof will be given in Sections \ref{SectionConvergenceModuloBubbling} and \ref{SectionGromovCompactness}.

\begin{theorem}[Gromov compactness] \label{TheoremGromovCompactness}
	Let $n$ be a nonnegative integer. Let $(A_{\nu},u_{\nu},\textbf{z}_{\nu})$ be a sequence of $n$-marked vortices whose Yang-Mills-Higgs energy satisfies a uniform bound
\[
\sup_{\nu} E(A_{\nu},u_{\nu}) < \infty.
\]
Then the sequence $(A_{\nu},u_{\nu},\textbf{z}_{\nu})$ has a Gromov convergent subsequence.
\end{theorem}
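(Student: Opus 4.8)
The plan is to run the bubble-tree construction of McDuff and Salamon \cite{McDuff/J-holomorphic-curves-and-symplectic-topology} for pseudoholomorphic curves, adapted to the gauge-theoretic setting. The two genuinely new ingredients are the control of the connection through Uhlenbeck compactness and the removable singularity theorem (Theorem~\ref{TheoremRemovalOfSingularities}); the analytic engine throughout is the a priori mean value estimate for the Yang-Mills-Higgs energy density established earlier in the article, which plays exactly the role of the $\veps$-regularity estimate for $J$-holomorphic curves. The first step is to fix the gauge. Since $F_{A_{\nu}} = -\mu(u_{\nu})\dvol_{\Sig}$ and $\X$ is compact, the curvatures are uniformly bounded in $L^{\infty}$, hence in $L^{p}$ for every $p$. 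By Uhlenbeck compactness \cite{Wehrheim/Uhlenbeck-compactness} there are gauge transformations $g_{\nu}\in\G^{2,p}(P)$ after which $g_{\nu}^{\ast}A_{\nu}$ is bounded in $W^{1,p}_{\loc}$, so a subsequence converges weakly in $W^{1,p}_{\loc}$ and strongly in $C^{0}_{\loc}$. Replacing $(A_{\nu},u_{\nu})$ by $(g_{\nu}^{\ast}A_{\nu},g_{\nu}^{-1}u_{\nu})$, I may assume the connections are controlled.

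Next I would introduce the finite set $Z_{\rv}\subset\Sig$ of bubbling points, those $z$ at which energy concentrates in the sense that $\limsup_{\nu}E(A_{\nu},u_{\nu};B_{r}(z))\ge\hbar$ for every $r>0$, where $\hbar>0$ is the energy quantum furnished by the a priori estimate. The uniform energy bound forces $Z_{\rv}$ to be finite. On $\Sig\setminus Z_{\rv}$ the a priori estimate yields uniform pointwise bounds on the energy density, and elliptic bootstrapping for the vortex equations then gives, after passing to a further subsequence, convergence of $(A_{\nu},u_{\nu})$ in $C^{\infty}_{\loc}$ on $\Sig\setminus Z_{\rv}$ to a finite-energy vortex $(A,u_{\rv})$ on the punctured surface. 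In a gauge in which the connection extends continuously across each puncture (supplied by Uhlenbeck's removable singularity theorem for connections with $L^{p}$ curvature in two dimensions), hypotheses (i) and (ii) of Theorem~\ref{TheoremRemovalOfSingularities} are satisfied, so $u_{\rv}$ extends to a $W^{1,p}$ section over all of $\Sig$ and $(A,u_{\rv})$ becomes a genuine vortex on $\Sig$, the principal component of the limit.

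Now the bubbling. At each $z_{0}\in Z_{\rv}$ I would carry out the rescaling analysis in a holomorphic chart as in Remark~\ref{RemarkLocalFromGlobal}, choosing centers $z^{\nu}\to z_{0}$ and scales $\veps_{\nu}\to0$ that capture the concentrating energy. The decisive structural point is that the connection one-form acquires a factor $\veps_{\nu}$ under rescaling: upon multiplying the second equation of (\ref{EquationLocalVortexEquations}) by $\veps_{\nu}^{2}$ to express it in rescaled variables, both the curvature term and the term $\lam^{2}\mu(u)$ scale like $\veps_{\nu}^{2}$ and drop out, while the rescaled first equation converges to $\del_{s}v + J_{z_{0}}\del_{t}v = 0$. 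Thus the connection decouples at the bubble scale and the bubbles are honest $J_{z_{0}}$-holomorphic spheres in the fiber $P(\X)_{z_{0}}$, to which the classical analysis applies verbatim. Iterating the soft-rescaling argument of \cite{McDuff/J-holomorphic-curves-and-symplectic-topology} produces the tree $T$, and Theorem~\ref{TheoremRemovalOfSingularities} (for the principal component) together with the standard removable singularity theorem for $J$-holomorphic curves (for the spheres) fills in all remaining punctures.

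Finally I would verify the convergence conditions of Definition~\ref{DefinitionGromovConvergence}. The (Connectedness) and (Rescaling) conditions, and the stabilization giving (Polystability), are read off from the soft-rescaling construction exactly as in \cite{McDuff/J-holomorphic-curves-and-symplectic-topology}, and the marked points are tracked while extracting one further subsequence so that (Marked point) holds. The hard part, and the step where the gauge theory genuinely intervenes, is the (Energy) identity: proving that no energy is lost in the necks joining the principal component to the bubbles, or joining bubbles to one another. I expect this no-energy-loss statement to be the main obstacle; it requires combining the a priori mean value estimate (to bound the energy density on the neck annuli) with Theorem~\ref{TheoremRemovalOfSingularities} (to show the annular limits carry no energy), together with the observation above that the curvature and moment-map contributions vanish at the bubble scale, so that the total energy is accounted for exactly by the vortex on $\pc$ and the fiber spheres $\{u_{\al}\}_{\al\in V_{S}}$.
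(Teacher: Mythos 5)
Your first two steps (Uhlenbeck gauge fixing, finiteness of the concentration set via the energy quantum $\hbar$ from the a priori estimate, $C^{\infty}_{\loc}$ convergence off that set by elliptic bootstrapping, and removal of the singularities of the limit via Theorem~\ref{TheoremRemovalOfSingularities}) reproduce the paper's Theorem~\ref{TheoremConvergenceModuloBubbling} essentially verbatim. Where you genuinely diverge is in the bubbling analysis. You propose to rescale the vortex equations directly at each concentration point and argue that the connection and moment-map terms scale like $\veps_{\nu}^{2}$ and decouple, so that the bubbles are fiberwise $J_{z_{0}}$-holomorphic spheres. The paper instead performs the graph construction \emph{before} any rescaling (Proposition~\ref{PropositionVorticesAsPseudoholomorphicMaps}): a reference connection induces a symplectic form $\om_{A_{0}}$ on $P(\X)$, each $g_{\nu}^{\ast}A_{\nu}$ induces an $\om_{A_{0}}$-tame almost complex structure $J_{\nu}$ converging in $C^{0}$ to $J_{A}$, and $g_{\nu}^{-1}u_{\nu}$ becomes a genuine $J_{\nu}$-holomorphic section with a mean value inequality inherited from Corollary~\ref{CorollaryAPrioriEstimateOnSurface}. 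This reduces the entire soft-rescaling, bubbles-connect, and no-energy-loss analysis to the McDuff--Salamon machinery, at the price of re-proving the annulus lemma for almost complex structures that are only $C^{0}$ and for a mean value inequality carrying an additive constant (Section~\ref{SubSectionBubblesConnectRevisited}, Lemma~\ref{LemmaAnnulusLemma}). Your route is more transparent about why the bubbles land in the fibers, and it is the one followed in related work of Ziltener and Gonz\'alez--Woodward, so it is viable in principle.

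The gap in your sketch is the neck analysis. After your rescaling the maps are \emph{not} pseudoholomorphic: they solve a perturbed Cauchy--Riemann equation with an $O(\veps_{\nu})$ connection term, so the classical analysis does not apply ``verbatim''; you must establish an isoperimetric inequality and an annulus (exponential energy decay) lemma for these perturbed solutions, uniformly in $\nu$, in order to run the soft-rescaling induction and to show that bubbles connect. Moreover, your stated mechanism for the (Energy) axiom --- combining the a priori estimate with Theorem~\ref{TheoremRemovalOfSingularities} to show ``the annular limits carry no energy'' --- is not the right tool: removal of singularities controls an isolated puncture of a single finite-energy solution, whereas no-energy-loss in the necks is a statement about a degenerating family of annuli and is exactly what the annulus lemma (the analogue of Lemma~4.7.3 in McDuff--Salamon, here Lemma~\ref{LemmaAnnulusLemma}) provides via the isoperimetric inequality. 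Until that lemma is stated and proved in your rescaled vortex setting, the (Energy), (Rescaling), and (Connectedness) axioms remain unverified.
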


\begin{remark}
	One may extend Definition \ref{DefinitionGromovConvergence} so as to cover sequences of polystable vortices as well, by adapting the definition of Gromov convergence for sequences of stable pseudoholomorphic curves from McDuff and Salamon \cite[Def.\,5.5.1]{McDuff/J-holomorphic-curves-and-symplectic-topology}. Then Theorem \ref{TheoremGromovCompactness} generalizes in the sense that any sequence of polystable vortices $(A_{\nu},\textbf{u}_{\nu},\textbf{z}_{\nu})$ whose energy satisfies a uniform bound
\[
\sup_{\nu} E(A_{\nu},\textbf{u}_{\nu}) < \infty
\]
has a Gromov convergent subsequence. With some straightforward modifications, the proof of this generalization carries over from the proof of Gromov compactness for sequences of stable pseudoholomorphic curves in \cite[Thm.\,5.5.5]{McDuff/J-holomorphic-curves-and-symplectic-topology}. Moreover, one may define a \emph{Gromov topology} on the moduli space of polystable vortices with uniformly bounded energy as in \cite[Sec.\,5.6]{McDuff/J-holomorphic-curves-and-symplectic-topology}. The statement of the above-mentioned generalization of Theorem~\ref{TheoremGromovCompactness} may then be rephrased by saying that the moduli space of polystable vortices with uniformly bounded energy is compact.
\end{remark}

\begin{remark}
	Note that Theorem \ref{TheoremRemovalOfSingularities} and Theorem \ref{TheoremGromovCompactness} continue to hold for non-compact manifolds $\X$ under the additional assumption that the moment map $\mu$ is proper and~$\X$ is equivariantly convex (see hypotheses (H1) and (H2) in \cite{Cieliebak/The-symplectic-vortex-equations-and-invariants-of-Hamiltonian-group-actions}). To avoid additional technicalities, however, we will restrict ourselves to compact manifolds $\X$ throughout.
\end{remark}

This article is organized as follows. In Section~\ref{SectionAPrioriEstimate} we prove an a priori estimate for symplectic vortices, which will play a central role in all subsequent arguments. It is used in Section~\ref{SectionRemovalOfSingularities} to prove Theorem~\ref{TheoremRemovalOfSingularities}. The proof of Theorem~\ref{TheoremGromovCompactness} is divided into two parts. In Section~\ref{SectionConvergenceModuloBubbling} we establish a compactness result for vortices, ignoring any bubbling phenomena. Section~\ref{SectionGromovCompactness} is then concerned with the actual construction of the Gromov compactification, beginning with two preparatory subsections. In Section~\ref{SubSectionVorticesVersusPseudoholomorphicMaps} we explain how vortices may naturally be considered as pseudoholomorphic curves, and in Section~\ref{SubSectionBubblesConnectRevisited} we tailor the bubbling analysis from \cite{McDuff/J-holomorphic-curves-and-symplectic-topology} to our situation. We close with the proof of Gromov compactness in Section~\ref{SubSectionProofOfGromovCompactness} by combining the results from the earlier sections.

\bigskip

\noindent \textbf{Acknowledgments:} The author is very grateful to his supervisors, \mbox{Dietmar} Salamon and Christopher Woodward, for their encouragement and support, and for all their help in writing this article. He is also indebted to Fabian Ziltener for many helpful discussions, and he would further like to thank Eduardo Gonz\'alez, Tobias Hartnick, and Jan Swoboda for their valuable comments. Finally he would like to thank the Mathematics Department at Rutgers University, the Isaac Newton Institute for Mathematical Sciences, and the Max Planck Institute for Mathematics for their hospitality and excellent working conditions. The author was supported by ETH Research Grant TH-01 06-1 and by EPSRC Grant EP/F005431/1.

\section{A priori estimate}
\label{SectionAPrioriEstimate}

The goal of this section is to prove an a priori estimate for symplectic vortices. It relies on an a priori estimate for vortices proved by Gaio and Salamon \cite{Gaio/Gromov-Witten-invariants-of-symplectic-quotients-and-adiabatic-limits}, see also Frauenfelder \cite{Frauenfelder/Floer-homology-of-symplectic-quotients-and-the-Arnold-Givental-conjecture} and Ziltener \cite{Ziltener/The-invariant-symplectic-action-and-decay-for-vortices}. In fact we will prove two versions of this estimate: a local version for vortices on the punctured disk, and a global version for vortices on a Riemann surface.

\smallskip

We keep the notation introduced in Section \ref{SectionIntroductionAndMainResults}. Furthermore, for $w_{0} \in \C$ and $r > 0$ we denote by $B_{r}(w_{0}) \subset \C$ the closed disk of radius $r$ with center at~$w_{0}$.

\begin{theorem}[A priori estimate, local version] \label{TheoremAPrioriEstimate}
	Given a smooth function $\map{\lam}{B}{(0,\infty)}$, there exist constants $\de,C>0$ such that for all $w_{0}\in B$ and all $r > 0$ satisfying $B_{r}(w_{0}) \subset B$ the following holds. If $(\Phi,\Psi,u)$ is a vortex on $B_{r}(w_{0})$, then
\begin{eqnarray*}
	\hspace{2mm} E\bigl( \Phi,\Psi,u;B_{r}(w_{0}) \bigr) < \de \quad \Longrightarrow \quad e(\Phi,\Psi,u)(w_{0}) \le \frac{C}{r^{2}} \cdot E\bigl( \Phi,\Psi,u;B_{r}(w_{0}) \bigr).
\end{eqnarray*}
\end{theorem}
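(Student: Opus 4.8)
The plan is to prove this in the same two steps by which one proves the mean value inequality for the energy density of a $J$-holomorphic curve (McDuff--Salamon \cite{McDuff/J-holomorphic-curves-and-symplectic-topology}, Lemma~4.3.1). First I would establish a pointwise elliptic differential inequality
\[
\Delta e \ge -c\,(e + e^{2})
\]
for the energy density $e = e(\Phi,\Psi,u)$ on $B_{r}(z_{0})$, where $\Delta = \del_{s}^{2} + \del_{t}^{2}$ and $c$ depends only on $J$, $\lam$, and the geometry of the compact manifold $\X$. Then I would feed this inequality into a Heinz-type mean value inequality: once $\int_{B_{r}(z_{0})}e$ drops below a threshold $\de$, the quadratic term is dominated and one recovers the bound $e(z_{0}) \le \frac{C}{r^{2}}E(\Phi,\Psi,u;B_{r}(z_{0}))$.

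\emph{The Bochner inequality.} I would work with the gauge-covariant quantities $v_{s} \deq \del_{s}u + X_{\Phi}(u)$ and $v_{t} \deq \del_{t}u + X_{\Psi}(u)$, so that the first equation in (\ref{EquationLocalVortexEquationsAPrioriEstimate}) reads $v_{t} = J\,v_{s}$ and $e = \ABS{v_{s}}^{2} + \lam^{2}\ABS{\mu(u)}_{\gfr}^{2}$ (using that $J$ is an isometry of $\l\cdot,\cdot\r_{J}$). Writing $\kap \deq \del_{s}\Psi - \del_{t}\Phi + [\Phi,\Psi]$ for the curvature of the connection $\Phi\,\dop s + \Psi\,\dop t$, the second equation reads $\kap = -\lam^{2}\mu(u)$, and the covariant derivatives $\na_{s},\na_{t}$ along $u$ (built from the Levi--Civita connection of $\l\cdot,\cdot\r_{J}$ and the gauge field) satisfy the commutation relation $\na_{s}v_{t} - \na_{t}v_{s} = X_{\kap}(u)$. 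The Bochner identity
\[
\tfrac{1}{2}\Delta\ABS{v_{s}}^{2} = \ABS{\na_{s}v_{s}}^{2} + \ABS{\na_{t}v_{s}}^{2} + \l \na_{s}\na_{s}v_{s} + \na_{t}\na_{t}v_{s},\,v_{s}\r
\]
(modulo lower-order terms coming from the $z$-dependence of $J$) then lets me reduce the second-order term to lower order: differentiating $v_{t}=J v_{s}$, commuting $\na_{s}$ and $\na_{t}$ (which produces the Riemann curvature $R$ of $\X$ and the term $X_{\kap}$), and substituting $\kap=-\lam^{2}\mu(u)$, I express it through $R$, $\na J$, $\na X$, the Hessian of $\mu$, and the functions $\lam,\del\lam$. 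Since $\X$ is compact all of these geometric quantities are bounded, $\ABS{X_{\xi}(u)} \le \mathrm{const}\cdot\ABS{\xi}_{\gfr}$, and $\lam,\del\lam$ are bounded on $B$; contracting with $v_{s}$, the curvature contributions are $\ge -c\,\ABS{v_{s}}^{4}$ and the moment-map coupling $\l X_{\kap}(u),v_{s}\r = -\lam^{2}\l X_{\mu(u)}(u),v_{s}\r$ is $\ge -c\,e$. A parallel computation for $\Delta\bigl(\lam^{2}\ABS{\mu(u)}_{\gfr}^{2}\bigr)$, using the moment-map identity $\l \dop\mu(u)v_{s},\xi\r_{\gfr} = \om\bigl(X_{\xi}(u),v_{s}\bigr)$ and boundedness of $\mu,\na\mu,\lam,\del\lam$, yields the same kind of lower bound. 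Adding the two gives $\Delta e \ge -c\,(e+e^{2})$.

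\emph{The mean value inequality and conclusion.} With the differential inequality in hand I would invoke the standard Heinz-type mean value inequality (see McDuff--Salamon \cite{McDuff/J-holomorphic-curves-and-symplectic-topology}, Section~4.3): there are constants $\de>0$ and $C>0$ so that any nonnegative $C^{2}$ function $e$ on $B_{r}(z_{0})$ satisfying $\Delta e \ge -c(e+e^{2})$ and $\int_{B_{r}(z_{0})}e < \de$ obeys $e(z_{0}) \le \frac{C}{r^{2}}\int_{B_{r}(z_{0})}e$. The small-energy hypothesis $E < \de$ is exactly what forces the quadratic term to be controlled so that the comparison argument underlying this lemma closes, and the linear term is harmless because $B_{r}(z_{0})\subset B$ forces $r\le\operatorname{diam}(B)$, so on domains of bounded radius the inequality tolerates a linear term with a constant depending only on this radius bound. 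Since $\int_{B_{r}(z_{0})}e = E(\Phi,\Psi,u;B_{r}(z_{0}))$, this is precisely the claimed estimate.

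\emph{Main obstacle.} The technical heart is the Bochner computation of the second step: one must track the gauge field carefully, and the decisive point is that the curvature term $X_{\kap}$ produced by the non-commutativity of $\na_{s}$ and $\na_{t}$ is converted, via the \emph{second} vortex equation, into a multiple of $\mu(u)$, which is itself part of $e$. This coupling of the two equations is exactly what makes the system close up into the self-contained inequality $\Delta e \ge -c(e+e^{2})$; using only the first equation the curvature contribution could not be absorbed. Special cases of this computation already appear in Gaio--Salamon \cite{Gaio/Gromov-Witten-invariants-of-symplectic-quotients-and-adiabatic-limits}, Frauenfelder \cite{Frauenfelder/The-Arnold-Givental-conjecture-and-moment-Floer-homology}, and Ziltener \cite{Ziltener/The-invariant-symplectic-action-and-decay-for-vortices}, which I would adapt to the present family $(J,\lam)$.
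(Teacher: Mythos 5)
Your proposal follows essentially the same route as the paper: the differential inequality $\la e \ge -c\,(e+e^{2})$ is exactly Proposition~\ref{PropositionPDIForLaplacianOfEnergyDensity}, derived there too from the commutator identity $\nab{A,s}{v_{t}}-\nab{A,t}{v_{s}}=\Lop_{u}\kap$ together with the substitution $\kap=-\lam^{2}\mu(u)$, and the Heinz-type step is Proposition~\ref{PropositionGeneralMeanValueInequality} (Wehrheim's version, which handles the linear term directly, with $r\le 1$ absorbing it into $C$). The one ingredient your sketch compresses into ``all geometric quantities are bounded by compactness'' is the uniformity of $c$ in the terms involving second-order $A$-twisted derivatives of the family $J$, which the paper secures via $G$-invariance of $J$ and an Uhlenbeck gauge-fixing argument (Lemma~\ref{LemmaBoundForDerivativesOfAAPrioriEstimate}); this does not change the architecture of the proof.
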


\smallskip

As a corollary of this theorem we obtain the following a priori estimate for vortices on the Riemann surface $\Sig$.

\begin{corollary}[A priori estimate, global version]
\label{CorollaryAPrioriEstimateOnSurface}
	There exist constants $R,\hbar,C>0$ such that for all $z_{0}\in \Sig$ and all $0 < r \le R$ the following holds. If $(A,u)$ is a vortex on $\Sig$, then
\[
E\bigl( A,u;B_{r}(z_{0}) \bigr) < \hbar \quad \Longrightarrow \quad
\frac{1}{2} \Abs{\dop_{A}\!u(z_{0})}_{J}^{2} + \Abs{\mu(u(z_{0}))}_{\gfr}^{2} \\ \le \frac{C}{r^{2}} \cdot E\bigl( A,u;B_{r}(z_{0}) \bigr).
\]
\end{corollary}

\bigskip

We will prove the corollary at the end of this section. The proof of Theorem~\ref{TheoremAPrioriEstimate} is based on the following two propositions.

\begin{proposition}[{Gaio and Salamon \cite[Sec.\,9]{Gaio/Gromov-Witten-invariants-of-symplectic-quotients-and-adiabatic-limits}}] \label{PropositionPDIForLaplacianOfEnergyDensity}
	Given a smooth function $\map{\lam}{B}{(0,\infty)}$, there exists a constant $c \ge 0$ such that for all $w_{0}\in B$ and all $r > 0$ satisfying $B_{r}(w_{0}) \subset B$ the following holds: If $(\Phi,\Psi,u)$ is a vortex on $B_{r}(w_{0})$, then its Yang-Mills-Higgs energy density $e \deq e(\Phi,\Psi,u)$ defined in (\ref{EquationYMHEnergyDensityLocal}) satisfies the partial differential inequality
\begin{eqnarray*}
	\la e \ge - c \cdot e^{2}.
\end{eqnarray*}
\end{proposition}

\begin{proof}
	The proof is the same as that of Claim 1 in the proof of \cite[Lemma 3.3]{Ziltener/The-invariant-symplectic-action-and-decay-for-vortices} and will therefore be omitted. It relies on \cite[Formula (9.6)]{Gaio/Gromov-Witten-invariants-of-symplectic-quotients-and-adiabatic-limits}.
\end{proof}

\begin{proposition}[{McDuff and Salamon \cite[Lemma 4.3.2]{McDuff/J-holomorphic-curves-and-symplectic-topology}}] \label{PropositionGeneralMeanValueInequality}
	Let $r>0$ and $c \ge 0$. If $\map{f}{B_{r}(0)}{\R}$ is a function of class $C^{2}$ that satisfies the inequalities
\[
\la f \ge -c \cdot f^{2}, \quad f\ge 0, \quad \int_{B_{r}(0)} f < \frac{\pi}{8 c},
\]
then
\[
f(0) \le \frac{8}{\pi r^{2}} \cdot \int_{B_{r}(0)} f.
\]
\end{proposition}

\medskip

\begin{proof}[Proof of Theorem~\ref{TheoremAPrioriEstimate}]
	Let $c$ be the constant from Proposition~\ref{PropositionPDIForLaplacianOfEnergyDensity}. Define $\de \deq \pi/8c$ and \mbox{$C \deq 8/\pi$}. Let $w_{0}\in B$ and $r > 0$ such that $B_{r}(w_{0}) \subset B$. Assume that $(\Phi,\Psi,u)$ is a vortex on $B_{r}(w_{0})$ and denote by $e \deq e(\Phi,\Psi,u)$ its Yang-Mills-Higgs energy density. Define a function $\map{f}{B_{r}(0)}{\R}$ by $f(w) \deq e(w + w_{0})$. Then Proposition~\ref{PropositionPDIForLaplacianOfEnergyDensity} implies that
\[
\la f \ge -c \cdot f^{2}, \quad f\ge 0.
\]
Hence it follows from Proposition~\ref{PropositionGeneralMeanValueInequality} that
\begin{eqnarray} \label{ImplicationMeanValueInequality}
	\int_{B_{r}(0)} f < \frac{\pi}{8 c} \quad \Longrightarrow \quad f(0) \le \frac{8}{\pi r^{2}} \cdot \int_{B_{r}(0)} f.
\end{eqnarray}
Since
\[
E\bigl( \Phi,\Psi,u;B_{r}(w_{0}) \bigr) = \int_{B_{r}(w_{0})} e = \int_{B_{r}(0)} f,
\]
the theorem follows from (\ref{ImplicationMeanValueInequality}).
\end{proof}

\smallskip

\begin{proof}[Proof of Corollary \ref{CorollaryAPrioriEstimateOnSurface}]
	Choose a finite collection of holomorphic disks
\[
\lmapsimeq{\vphi_{j}}{B}{\vphi_{j}(B) \subset \Sig}, \quad j=1,\ldots,N
\]
in such a way that the open sets $U_{j} \deq \vphi_{j}( B^{\circ} )$, where $B^{\circ}$ denotes the interior of $B$, form a covering of $\Sig$. By the Lebesgue number lemma, we find a constant $R>0$ such that for every~$z_{0}\in \Sig$ and every $0 < r < R$ there exists $j_{0}\in\{1,\ldots,N\}$ such that $B_{r}(z_{0}) \subset U_{j_{0}}$. The area form~$\dvol_{\Sig}$ defines smooth functions $\map{\lam_{j}}{B}{(0,\infty)}$ by the relation $\vphi_{j}^{\ast} \dvol_{\Sig} = \lam_{j}^{2} \, \dop\!s \wedge \dop\!t$. We denote by $\dop_{\Sig}$ the distance function on $\Sig$ defined by the metric $\l\cdot,\cdot\r_{\Sig}$ and by $\dop_{B}$ the distance function on $B$ corresponding to the Euclidean metric. By compactness of~$B$ there exist constants $c_{j} > 0$ such that
\begin{eqnarray} \label{InequalityLengthComparison}
	\dop_{\Sig} \bigl( \vphi_{j}(w_{1}),\vphi_{j}(w_{2}) \bigr) \le c_{j} \cdot \dop_{B}(w_{1},w_{2})
\end{eqnarray}
for all $w_{1},w_{2} \in B$. By Theorem~\ref{TheoremAPrioriEstimate} there exist constants~$\de_{j} > 0$ and $C_{j} > 0$, depending on the function~$\lam_{j}$, such that for all $w_{0}\in B$ and all $r > 0$ satisfying $B_{r}(w_{0}) \subset B$ the following holds. If $(\Phi,\Psi,u)$ is a vortex solving the equations
\[
\begin{split}
	\del_{s}\!u + X_{\Phi}(u) + J \bigl( \del_{t}\!u + X_{\Psi}(u) \bigr) &= 0, \\
	\del_{s}\!\Psi - \del_{t}\!\Phi + [\Phi,\Psi] + \lam_{j}^{2} \cdot \mu(u) &= 0
\end{split}
\]
on $B$, then
\begin{equation} \label{ImplicationAPrioriEstimateProofOfCorollary}
	E\bigl( \Phi,\Psi,u;B_{r}(w_{0}) \bigr) < \de_{j} \quad \Longrightarrow \quad e(\Phi,\Psi,u)(w_{0}) \le \frac{C_{j}}{r^{2}} \cdot E\bigl( \Phi,\Psi,u;B_{r}(w_{0}) \bigr).
\end{equation}
With all this understood, we define
\begin{eqnarray} \label{EqualityDefinitionOfConstantsAPrioriOnSurface}
	\hbar \deq \min_{1 \le j \le N} \bigl\{ \de_{j} \bigr\} \quad \text{and} \quad C \deq \max_{1 \le j \le N} \left\{ \frac{C_{j} \cdot c_{j}^{2}}{\norm{\lam_{j}}_{C^{0}(B)}^{2}} \right\}.
\end{eqnarray}

Let now $(A,u)$ be a vortex on $\Sig$. Let $z_{0} \in \Sig$ and $0 < r < R$, and assume that
\begin{eqnarray} \label{InequalityAssumptionOnEnergy}
	E \bigl( A,u;B_{r}(z_{0}) \bigr) < \hbar.
\end{eqnarray}
Since $r<R$ we have $B_{r}(z_{0}) \subset U_{j_{0}}$ for some $j_{0}\in \{1,\ldots,N\}$. By Remark~\ref{RemarkLocalFromGlobal}, locally in the chart $\map{\vphi_{j_{0}}}{B}{\Sig}$ the vortex $(A,u)$ is given by a triple $(\Phi,\Psi,u^{\loc})$ that solves the vortex equations
\[
\begin{split}
	\del_{s}\!u^{\loc} + X_{\Phi}\bigl( u^{\loc} \bigr) + J \bigl( \del_{t}\!u^{\loc} + X_{\Psi}\bigl( u^{\loc} \bigr) \bigr) &= 0, \\
	\del_{s}\!\Psi - \del_{t}\!\Phi + [\Phi,\Psi] + \lam_{j_{0}}^{2} \cdot \mu\bigl( u^{\loc} \bigr) &= 0.
\end{split}
\]
Define
\begin{eqnarray} \label{EquationDefinitionsProofOfCorollary}
	w_{0} \deq \vphi_{j_{0}}^{-1}(z_{0}) \quad \text{and} \quad \rho_{0} \deq \frac{r}{c_{j_{0}}}.
\end{eqnarray}
It follows from inequality (\ref{InequalityLengthComparison}) that $B_{\rho_{0}}(w_{0}) \subset \vphi_{j_{0}}^{-1} \bigl( B_{r}(z_{0}) \bigr) \subset B$. Hence
\begin{equation} \label{InequalityEnergyComparison}
	E\bigl( \Phi,\Psi,u^{\loc};B_{\rho_{0}}(w_{0}) \bigr) \le E\bigl( \Phi,\Psi,u^{\loc};\vphi_{j_{0}}^{-1}(B_{r}(z_{0}) \bigr) = E\bigl( A,u;B_{r}(z_{0}) \bigr)
\end{equation}
by formula~(\ref{EquationEnergiesLocalVsGlobal}). By assumption (\ref{InequalityAssumptionOnEnergy}) and the definition of $\hbar$ in (\ref{EqualityDefinitionOfConstantsAPrioriOnSurface}) it follows that
\[
E\bigl( \Phi,\Psi,u^{\loc};B_{\rho_{0}}(w_{0}) \bigr) < \hbar \le \de_{j_{0}}.
\]
Hence we may apply estimate (\ref{ImplicationAPrioriEstimateProofOfCorollary}) to the vortex $(\Phi,\Psi,u^{\loc})$, obtaining
\[
e\bigl( \Phi,\Psi,u^{\loc} \bigr)(w_{0}) \le \frac{C_{j_{0}}}{\rho_{0}^{2}} \cdot E\bigl( \Phi,\Psi,u^{\loc};B_{\rho_{0}}(w_{0}) \bigr).
\]
Using inequality~(\ref{InequalityEnergyComparison}) and the definition of $\rho_{0}$ in (\ref{EquationDefinitionsProofOfCorollary}), we further get
\[
e\bigl( \Phi,\Psi,u^{\loc} \bigr)(w_{0}) \le \frac{C_{j_{0}} \cdot c_{j_{0}}^{2}}{r^{2}} \cdot E\bigl( A,u;B_{r}(z_{0}) \bigr).
\]
Using the identity
\[
\frac{1}{2} \Abs{\dop_{A}\!u(z_{0})}^{2} + \Abs{\mu(u(z_{0}))}^{2} = e\bigl( \Phi,\Psi,u^{\loc} \bigr)(w_{0}) \cdot \lam_{j_{0}}^{-2}(w_{0})
\]
which holds by formula~(\ref{EqualityEnergyDensityLocalVsGlobal}), we arrive at
\[
\frac{1}{2} \Abs{\dop_{A}\!u(z_{0})}^{2} + \Abs{\mu(u(z_{0}))}^{2} \le \frac{C_{j_{0}} \cdot c_{j_{0}}^{2} \cdot \norm{\lam_{j_{0}}}_{C^{0}(B)}^{-2}}{r^{2}} \cdot E\bigl( A,u;B_{r}(z_{0}) \bigr).
\]
The a priori estimate now follows from the definition of $C$ in (\ref{EqualityDefinitionOfConstantsAPrioriOnSurface}). This proves Corollary \ref{CorollaryAPrioriEstimateOnSurface}.
\end{proof}

\section{Removal of singularities}
\label{SectionRemovalOfSingularities}

The goal of this section is to prove Theorem~\ref{TheoremRemovalOfSingularities}. We use Gromov's graph construction to reduce this problem to removal of singularities for certain punctured pseudoholomorphic curves. This will enable us to apply techniques from McDuff and Salamon \cite[Sec.\,4.5]{McDuff/J-holomorphic-curves-and-symplectic-topology}.

We keep the notation introduced in Section \ref{SectionIntroductionAndMainResults}. Let us fix a smooth function $\map{\lam}{B}{(0,\infty)}$, and let $(\Phi,\Psi,u)$ be a smooth vortex on the punctured disk $B \setminus \{0\}$ such that
\begin{description}[topsep=1ex, itemsep=0.5ex, itemindent=0cm, labelsep=1ex, leftmargin=0.5cm]
	\item[(R1)] $\Phi$ and $\Psi$ extend continuously to all of $B$;
	\item[(R2)] $(\Phi,\Psi,u)$ has finite Yang-Mills-Higgs energy $E(\Phi,\Psi,u;B) < \infty$.
\end{description}
It will be convenient to work with the smooth connection 1-form
\[
A \deq \Phi \dop\!s + \Psi \dop\!t
\]
on $B \setminus \{0\}$ that is determined by the functions $\Phi$ and $\Psi$. By Remark \ref{RemarkLocalFromGlobal}, the first vortex equation~(\ref{EquationLocalVortexEquations}) may then be written in the form
\begin{eqnarray} \label{EquationLocalVortexEquationsRemovalOfSingularities}
	\delbar_{J,A}(u) \deq \frac{1}{2} \, \bigl( \dop_{A}\!u + J(u) \circ \dop_{A}\!u \circ i \bigr) = 0,
\end{eqnarray}
and the Yang-Mills-Higgs energy density (\ref{EquationYMHEnergyDensityLocal}) of the vortex $(\Phi,\Psi,u)$ may be expressed in terms of $(A,u)$ by
\begin{eqnarray} \label{EqualityLocalYMHEnergyOfVortex}
	e(\Phi,\Psi,u) = \frac{1}{2} \Abs{\dop_{A}\!u}_{J}^{2} + \lam^{2} \cdot \Abs{\mu(u)}_{\gfr}^{2}.
\end{eqnarray}
Here the norm $\Abs{\dop_{A}\!u}_{J}$ is understood with respect to the metric $\l\cdot,\cdot\r_{J}$ on $\X$ and the Euclidean metric on $B$. Note that hypothesis (R1) above means that the connection form $A$ extends continuously to all of $B$.

We shall prove that the map $u$ is of class $W^{1,p}$ on $B$ for every $p>2$. We will proceed as follows. In Section \ref{SubSectionTheGraphConstruction}, we apply the graph construction in order to transform the vortex~$(A,u)$ into a punctured pseudoholomorphic section of the trivial fiber bundle $B \xop \X$ over $B$. In Section~\ref{SubSectionMeanValueInequality}, we obtain a mean value inequality for this section from the a priori estimate of Section~\ref{SectionAPrioriEstimate}. The actual proof of Theorem~\ref{TheoremRemovalOfSingularities} will then be given in Section~\ref{SubSectionProofOfTheoremRemoval}.

\subsection{The graph construction}
\label{SubSectionTheGraphConstruction}
	Let $\Xtilde \deq B \xop \X$ denote the total space of the trivial symplectic fiber bundle over $B$ with fiber the manifold $\X$. The map $\map{u}{B \setminus \{0\}}{\X}$ then gives rise to a section
\[
\map{\utilde}{B \setminus \{0\}}{\Xtilde}, \quad \utilde(z) \deq \bigl( z,u(z) \bigr),
\]
and the almost complex structure $J$ induces an almost complex structure $\Jtilde$ on $\Xtilde$ by
\begin{eqnarray} \label{EquationAlmostComplexStructureOnE}
	\Jtilde(v,w) \deq \bigl( i \, v,J w + J \, X_{A(v)}(x) - X_{A(i \, v)}(x) \bigr)
\end{eqnarray}
for all $(z,x)\in B \xop \X$ and $v\in T_{z}B$, $w\in T_{x}\X$. Here we use the identifications $T_{(z,x)} \Xtilde \cong T_{z}B \oplus T_{x}\X$ and $T_{z}B \cong \C$. In fact, a straightforward computation shows that $\Jtilde^{2} (v,w) = - (v,w)$. Note that the almost complex structure $\Jtilde$ will in general only be continuous, as follows from~(\ref{EquationAlmostComplexStructureOnE}) since~$A$ is only assumed to be continuous on $B$ by hypothesis (R1).

\begin{lemma} \label{LemmautildeIsJ_AHolomorphic}
	The section $\map{\utilde}{B \setminus \{0\}}{\Xtilde}$ is $(i,\Jtilde)$-holomorphic.
\end{lemma}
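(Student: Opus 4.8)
The plan is to verify directly, at each point $z \in B \setminus \{0\}$, the defining relation for $(\mathrm{i},\Jtilde)$-holomorphicity, namely $d\utilde \circ \mathrm{i} = \Jtilde(\utilde) \circ d\utilde$, and to show that it collapses to the first vortex equation. Since $\Jtilde^{2} = -\Id$ has already been checked, I would first note that it suffices to test this relation on the single coordinate vector field $\del_s$: once $d\utilde(\del_t) = \Jtilde(\utilde)\,d\utilde(\del_s)$ is established, applying $\Jtilde$ to both sides and using $\mathrm{i}\,\del_t = -\del_s$ automatically yields the corresponding identity for $\del_t$.

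Next I would write the differential of the section $\utilde(z) = (z,u(z))$ in coordinates as $d\utilde(\del_s) = (\del_s,\del_s\!u)$ and $d\utilde(\del_t) = (\del_t,\del_t\!u)$, and then apply the defining formula (\ref{EquationAlmostComplexStructureOnE}) with $v = \del_s$, using $A(\del_s) = \Phi$ and $A(\mathrm{i}\,\del_s) = A(\del_t) = \Psi$. This gives
\[
\Jtilde(\utilde)\,d\utilde(\del_s) = \bigl( \del_t,\ J\,\del_s\!u + J\,X_{\Phi}(u) - X_{\Psi}(u) \bigr),
\]
whose first component already agrees with that of $d\utilde(\del_t)$.

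The crux is then the comparison of the second components, which reduces the claim to the identity $J\bigl(\del_s\!u + X_{\Phi}(u)\bigr) = \del_t\!u + X_{\Psi}(u)$. This is exactly the first vortex equation (\ref{EquationLocalVortexEquations}) rearranged: applying $J$ to $\del_s\!u + X_{\Phi}(u) = -J\bigl(\del_t\!u + X_{\Psi}(u)\bigr)$ and using $J^{2} = -\Id$ produces it. So there is no genuine analytic obstacle here; the only point requiring care is the bookkeeping of the extra terms $J\,X_{A(v)}(x) - X_{A(\mathrm{i}\,v)}(x)$ in the definition of $\Jtilde$, which are precisely engineered so that, when combined with the vertical part of $d\utilde$, they reassemble the twisted Cauchy-Riemann operator $\delbar_{J,A}$. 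I would finally emphasize that the argument is purely algebraic and pointwise, so the fact that $\Jtilde$ is merely continuous (not smooth) causes no difficulty, and only smoothness of $\utilde$ on the punctured disc is used.
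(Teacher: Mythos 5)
Your proposal is correct and follows essentially the same route as the paper: both verify $\Jtilde\circ\dop\!\utilde=\dop\!\utilde\circ\mathrm{i}$ directly from the explicit formula (\ref{EquationAlmostComplexStructureOnE}), observing that the correction terms $J\,X_{A(v)}-X_{A(\mathrm{i}v)}$ combine with the vertical part of $\dop\!\utilde$ to reduce the identity to the first vortex equation. The paper phrases the computation for an arbitrary $v\in TB$ via the twisted derivative $\dop_{A}\!u$ rather than in the coordinates $\del_{s},\del_{t}$, but this is only a notational difference.
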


\begin{proof}
	The differential of $\utilde$ is given by $\dop\!\utilde(v) = ( v,\dop\!u(v) )$ for $v \in TB$. By the first vortex equation (\ref{EquationLocalVortexEquationsRemovalOfSingularities}), we have $J \, \dop_{A}\!u(v) = \dop_{A}\!u ( i\,v )$. Hence using formula (\ref{EquationTwistedDerivative}) we get
\[
\begin{split}
	\Jtilde \bigl( \dop\!\utilde(v) \bigr) &= \Jtilde \bigl( v,\dop\!u(v) \bigr) = \bigl( i\,v,J \dop\!u(v) + J X_{A(v)}(u) - X_{A(i\,v)}(u) \bigr) \\
	&= \bigl( i\,v,J \dop_{A}\!u(v) - X_{A(i\,v)}(u) \bigr) = \bigl( i\,v,\dop_{A}\!u(i\,v) - X_{A(i\,v)}(u) \bigr) \\
	&= \bigl( i\,v,\dop\!u(i\,v) \bigr) = \dop\!\utilde(i\,v).
\end{split}
\]
This implies that $\delbar_{\Jtilde}(\utilde) = \frac{1}{2} \bigl( \dop\!\utilde + \Jtilde(\utilde) \circ \dop\!\utilde \circ i \bigr) = 0.$
\end{proof}

Next we define a symplectic form $\omtilde$ on the manifold $\Xtilde$ that tames the almost complex structure $\Jtilde$. By compactness of $B$ and $\X$ we may fix a constant $c_{A}>1$ such that
\begin{eqnarray} \label{InequalityConstantForSymplecticFormOnMtilde}
	\Abs{X_{A(v)}(x)}_{J} \le \frac{1}{5} \, c_{A} \cdot \Abs{v}
\end{eqnarray}
for all tangent vectors $v\in TB$ and all points $x\in \X$, where $\abs{\,\cdot\,}_{J}$ and $\abs{\,\cdot\,}$ denote the norms associated to the metric $\l\cdot,\cdot\r_{J}$ on $\X$ and the Euclidean metric on $B$, respectively. We then define
\[
\omtilde \deq c_{A}^{2} \cdot \om_{0} \,\oplus\,\, \om,
\]
where $\om_{0} \deq \dop\!s \wedge \dop\!t$ denotes the standard symplectic form on $B$.

\begin{lemma} \label{LemmaTamingOnE}
	The symplectic form $\omtilde$ tames the almost complex structure $\Jtilde$.
\end{lemma}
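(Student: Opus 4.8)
The plan is to verify the taming inequality by a direct pointwise computation. Since $\omtilde = c_{A}^{2} \cdot \om_{0} \oplus \om$ is a direct sum of two symplectic forms it is automatically closed and nondegenerate, so it suffices to show that $\omtilde\bigl( (v,w),\Jtilde(v,w) \bigr) > 0$ for every nonzero tangent vector $(v,w) \in T_{z}B \oplus T_{x}\X$ of $\Xtilde$.

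First I would substitute the definition (\ref{EquationAlmostComplexStructureOnE}) of $\Jtilde$ into this pairing and split it according to the direct sum. The base contribution is $c_{A}^{2}\,\om_{0}(v,\mathrm{i}\,v) = c_{A}^{2}\,\abs{v}^{2}$, using $\om_{0}(v,\mathrm{i}\,v) = \abs{v}^{2}$ for the standard structure on $B$. For the fiber contribution $\om\bigl( w, Jw + J X_{A(v)}(x) - X_{A(\mathrm{i}\,v)}(x) \bigr)$ I would use the relations $\langle\cdot,\cdot\rangle_{J} = \om(\cdot,J\cdot)$ and hence $\om(a,b) = -\langle a,Jb\rangle_{J}$ to rewrite the three terms. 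This produces the leading term $\om(w,Jw) = \abs{w}_{J}^{2}$ together with two cross terms, giving
\[
\omtilde\bigl((v,w),\Jtilde(v,w)\bigr) = c_{A}^{2}\abs{v}^{2} + \abs{w}_{J}^{2} + \langle w, X_{A(v)}(x)\rangle_{J} + \langle w, J X_{A(\mathrm{i}\,v)}(x)\rangle_{J}.
\]

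Next I would estimate the two cross terms. By Cauchy--Schwarz, the fact that $J$ is an isometry of $\langle\cdot,\cdot\rangle_{J}$ (so that $\abs{J X_{A(\mathrm{i}\,v)}(x)}_{J} = \abs{X_{A(\mathrm{i}\,v)}(x)}_{J}$), and the defining bound (\ref{InequalityConstantForSymplecticFormOnMtilde}) together with $\abs{\mathrm{i}\,v} = \abs{v}$, each cross term is bounded in absolute value by $\tfrac{1}{5}c_{A}\abs{v}\abs{w}_{J}$. Applying Young's inequality $2xy \le x^{2}+y^{2}$ to the sum $\tfrac{2}{5}c_{A}\abs{v}\abs{w}_{J}$ then yields
\[
\omtilde\bigl((v,w),\Jtilde(v,w)\bigr) \ge c_{A}^{2}\abs{v}^{2} + \abs{w}_{J}^{2} - \tfrac{1}{5}\bigl( c_{A}^{2}\abs{v}^{2} + \abs{w}_{J}^{2} \bigr) = \tfrac{4}{5}\bigl( c_{A}^{2}\abs{v}^{2} + \abs{w}_{J}^{2} \bigr),
\]
which is strictly positive for $(v,w)\neq 0$, proving the lemma.

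There is no genuine obstacle here; the content is elementary once the pairing is expanded. The only points requiring care are the bookkeeping of the two connection-dependent cross terms coming from the nonstandard part of $\Jtilde$, and checking that the factor $\tfrac{1}{5}$ built into the choice of $c_{A}$ in (\ref{InequalityConstantForSymplecticFormOnMtilde}) leaves enough room to absorb them. This numerical margin is precisely what forces the final coefficient $\tfrac{4}{5} > 0$, so the deliberately small constant in the definition of $c_{A}$ is exactly what makes the taming estimate go through.
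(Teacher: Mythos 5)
Your proof is correct and follows essentially the same route as the paper: a direct pointwise expansion of $\omtilde\bigl((v,w),\Jtilde(v,w)\bigr)$, with the cross terms absorbed via the Cauchy--Schwarz and Young inequalities using the margin built into the constant $c_{A}$ from (\ref{InequalityConstantForSymplecticFormOnMtilde}). The only cosmetic difference is that the paper completes the square around the combination $w + X_{A(v)}$, obtaining the lower bound $\tfrac{1}{2}\bigl(c_{A}^{2}\abs{v}^{2} + \abs{w + X_{A(v)}}_{J}^{2}\bigr)$, which it then reuses in Lemma~\ref{LemmaComparisonOfMetricsOnE} to compare $\abs{\dop\utilde}_{\Jtilde}$ with $\abs{\dop_{A}u}_{J}$, whereas your bound $\tfrac{4}{5}\bigl(c_{A}^{2}\abs{v}^{2} + \abs{w}_{J}^{2}\bigr)$ is equally valid for the taming statement itself.
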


\begin{proof}
	Let $(v,w)\in T\Xtilde$ be such that $(v,w) \neq (0,0)$. Using formula (\ref{EquationAlmostComplexStructureOnE}) and the definition of the metric $\l\cdot,\cdot\r_{J}$, we get
\begin{multline*}
	\omtilde\bigl( (v,w),\Jtilde(v,w) \bigr) = c_{A}^{2} \cdot \Abs{v}^{2} + \Abs{w + X_{A(v)}}_{J}^{2} - \bigl\l X_{A(v)},w + X_{A(v)} \bigr\r_{J} \\
	- \bigl\l J(w + X_{A(v)}),X_{A(i\,v)} \bigr\r_{J} + \bigl\l J X_{A(v)},X_{A(i\,v)} \bigr\r_{J}.	
\end{multline*}
Applying the inequalities of Cauchy-Schwarz and Young and using $J$-invariance of the norm $\Abs{\,\cdot\,}_{J}$ we may further estimate this from below by

\begin{eqnarray*}
	&& c_{A}^{2} \cdot \Abs{v}^{2} + \Abs{w + X_{A(v)}}_{J}^{2} - \Abs{X_{A(v)}}_{J} \cdot \Abs{w + X_{A(v)}}_{J} \\
	&& - \Abs{J(w + X_{A(v)})}_{J} \cdot \Abs{X_{A(i\,v)}}_{J} - \Abs{J X_{A(v)}}_{J} \cdot \Abs{X_{A(i\,v)}}_{J} \\
	&\ge& c_{A}^{2} \cdot \Abs{v}^{2} + \Abs{w + X_{A(v)}}_{J}^{2} - 4 \Abs{X_{A(v)}}_{J}^{2} - \frac{1}{4} \cdot \Abs{w + X_{A(v)}}_{J}^{2} \\
	&& - \frac{1}{4} \cdot \Abs{w + X_{A(v)}}_{J}^{2} - 4 \Abs{X_{A(i\,v)}}_{J}^{2} - \Abs{X_{A(v)}}_{J}^{2} - \Abs{X_{A(i\,v)}}_{J}^{2} \\
	&\ge& c_{A}^{2} \cdot \Abs{v}^{2} + \frac{1}{2} \Abs{w + X_{A(v)}}_{J}^{2} - 5 \Abs{X_{A(v)}}^{2}_{J} - 5 \Abs{X_{A(i\,v)}}_{J}^{2}.
\end{eqnarray*}
By inequality~(\ref{InequalityConstantForSymplecticFormOnMtilde}) this is not smaller than
\[
c_{A}^{2} \cdot \Abs{v}^{2} + \frac{1}{2} \Abs{w + X_{A(v)}}_{J}^{2} - \frac{1}{5} \, c_{A}^{2} \cdot \Abs{v}^{2} - \frac{1}{5} \, c_{A}^{2} \cdot \Abs{i\,v}^{2} \ge \frac{1}{2} \Bigl( c_{A}^{2} \cdot \Abs{v}^{2} + \Abs{w + X_{A(v)}}_{J}^{2} \Bigr) > 0,
\]
which proves the lemma.
\end{proof}

By Lemma \ref{LemmaTamingOnE}, the symplectic form $\omtilde$ and the almost complex structure~$\Jtilde$ determine a Riemannian metric $\l\cdot,\cdot\r_{\Jtilde}$ on $\Xtilde$ given by
\[
\bigl\l (v_{1},w_{1}),(v_{2},w_{2}) \bigr\r_{\Jtilde} \deq \frac{1}{2} \Bigl( \omtilde\bigl( (v_{1},w_{1}),\Jtilde(v_{2},w_{2}) \bigr) - \omtilde\bigl( \Jtilde(v_{1},w_{1}),(v_{2},w_{2}) \bigr) \Bigr)
\]
for all $(v_{1},w_{1}),(v_{2},w_{2}) \in T\Xtilde$. We will denote by $\Abs{\,\cdot\,}_{\Jtilde}$ the corresponding norm on $T \Xtilde$. Note that this norm will in general only be continuous, since~$\Jtilde$ has this property.

\begin{lemma} \label{LemmaComparisonOfMetricsOnE}
	The norm $\Abs{\,\cdot\,}_{\Jtilde}$ satisfies the inequalities
\[
\frac{1}{2} \, \Bigl( \Abs{v}^{2} + \Abs{w + X_{A(v)}}^{2}_{J} \Bigr) \le \Abs{(v,w)}_{\Jtilde}^{2} \le 3\,c_{A}^{2} \cdot \Bigl( \Abs{v}^{2} + \Abs{w + X_{A(v)}}^{2}_{J} \Bigr)
\]
for $(v,w)\in T\Xtilde$, where $c_{A}$ is the constant from inequality (\ref{InequalityConstantForSymplecticFormOnMtilde}).
\end{lemma}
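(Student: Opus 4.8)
The plan is to start from the explicit expansion of $\ABS{(v,w)}_{\Jtilde}^{2} = \omtilde\bigl((v,w),\Jtilde(v,w)\bigr)$ that was already carried out in the proof of Lemma~\ref{LemmaTamingOnE}. Writing $W \deq w + X_{A(v)}$ for brevity, the second equality in that computation records the identity
\[
\ABS{(v,w)}_{\Jtilde}^{2} = c_{A}^{2}\,\Abs{v}^{2} + \Abs{W}_{J}^{2} - \l X_{A(v)},W \r_{J} - \l JW,X_{A(\mathrm{i}\,v)} \r_{J} + \l J X_{A(v)},X_{A(\mathrm{i}\,v)} \r_{J},
\]
so the lemma reduces to sandwiching the three cross terms between suitable multiples of $\Abs{v}^{2}$ and $\Abs{W}_{J}^{2}$.

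For the left inequality I would simply invoke the chain of estimates already established in the proof of Lemma~\ref{LemmaTamingOnE}, which shows $\ABS{(v,w)}_{\Jtilde}^{2} \ge \tfrac{1}{2}\bigl(c_{A}^{2}\Abs{v}^{2} + \Abs{W}_{J}^{2}\bigr)$. Since $c_{A} > 1$ by (\ref{InequalityConstantForSymplecticFormOnMtilde}), this is at least $\tfrac{1}{2}\bigl(\Abs{v}^{2} + \Abs{W}_{J}^{2}\bigr)$, which is the asserted lower bound.

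For the right inequality I would bound each cross term from above by Cauchy--Schwarz, using $\Abs{JW}_{J} = \Abs{W}_{J}$ together with the defining estimate (\ref{InequalityConstantForSymplecticFormOnMtilde}) applied both to $v$ and to $\mathrm{i}\,v$ (the latter via $\Abs{\mathrm{i}\,v} = \Abs{v}$), giving $\Abs{X_{A(v)}}_{J}, \Abs{X_{A(\mathrm{i}\,v)}}_{J} \le \tfrac{1}{5}c_{A}\Abs{v}$. This yields
\[
\ABS{(v,w)}_{\Jtilde}^{2} \le c_{A}^{2}\Abs{v}^{2} + \Abs{W}_{J}^{2} + \tfrac{2}{5}c_{A}\Abs{v}\,\Abs{W}_{J} + \tfrac{1}{25}c_{A}^{2}\Abs{v}^{2}.
\]
Applying Young's inequality in the form $\tfrac{2}{5}c_{A}\Abs{v}\,\Abs{W}_{J} \le \tfrac{1}{5}c_{A}^{2}\Abs{v}^{2} + \tfrac{1}{5}\Abs{W}_{J}^{2}$ and collecting coefficients then gives the bound $\tfrac{31}{25}c_{A}^{2}\Abs{v}^{2} + \tfrac{6}{5}\Abs{W}_{J}^{2}$. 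Since $c_{A} > 1$ and both $\tfrac{31}{25}$ and $\tfrac{6}{5}$ are smaller than $3$, the right-hand side is at most $3c_{A}^{2}\bigl(\Abs{v}^{2} + \Abs{W}_{J}^{2}\bigr)$, as claimed.

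The argument is essentially bookkeeping and I do not expect a genuine obstacle; the only point requiring care is to keep the two applications of (\ref{InequalityConstantForSymplecticFormOnMtilde}) straight and to check that the Young splitting leaves the final coefficients comfortably below the target constant $3c_{A}^{2}$, which the slack in the factor $\tfrac{1}{5}$ of (\ref{InequalityConstantForSymplecticFormOnMtilde}) guarantees.
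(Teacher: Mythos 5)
Your proposal is correct and follows essentially the same route as the paper: both start from the expansion of $\omtilde\bigl((v,w),\Jtilde(v,w)\bigr)$ already written out in the proof of Lemma~\ref{LemmaTamingOnE}, obtain the lower bound by citing that proof together with $c_{A}>1$, and get the upper bound by Cauchy--Schwarz, the bound (\ref{InequalityConstantForSymplecticFormOnMtilde}), and Young's inequality. The only difference is bookkeeping: you track the constants slightly more sharply (ending at $\tfrac{31}{25}c_{A}^{2}$ and $\tfrac{6}{5}$ before relaxing to $3c_{A}^{2}$), whereas the paper uses cruder intermediate bounds, but both land at the same stated constant.
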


\begin{proof}
	Recall that $c_{A}>1$. The computation in the proof of Lemma~\ref{LemmaTamingOnE} above then shows that
\[
\Abs{(v,w)}_{\Jtilde}^{2} = \omtilde\bigl( (v,w),\Jtilde(v,w) \bigr) \ge \frac{1}{2} \, \Bigl( \Abs{v}^{2} + \Abs{w + X_{A(v)}}^{2}_{J} \Bigr),
\]
which proves the first inequality. The second inequality follows in a similar way.
\end{proof}

\subsection{Mean value inequality}
\label{SubSectionMeanValueInequality}

We derive a mean value inequality for the $\Jtilde$-holomorphic section $\map{\utilde}{B\setminus\{0\}}{\Xtilde}$ from the a priori estimate for the vortex $(\Phi,\Psi,u)$ provided by Theorem~\ref{TheoremAPrioriEstimate}. Note that the mean value inequality from \cite[Lemma 4.3.1]{McDuff/J-holomorphic-curves-and-symplectic-topology} does not apply to the section $\utilde$ since the almost complex structure $\Jtilde$ will in general only be continuous.

\smallskip

To begin with, we recall from \cite[Sec.\,2.2]{McDuff/J-holomorphic-curves-and-symplectic-topology} that the energy of the section~$\utilde$ on an open subset~$U \subset B$ is given by
\[
E(\utilde;U) \deq \frac{1}{2} \int_{U} \Abs{\dop\!\utilde}_{\Jtilde}^{2},
\]
where the norm $\Abs{\dop\!\utilde}_{\Jtilde}$ is understood with respect to the metric $\l\cdot,\cdot\r_{\Jtilde}$ on $\Xtilde$ and the Euclidean metric on $B \subset \C$.

\begin{lemma} \label{LemmaMeanValueInequalityRemovalOfSingularities}
	The section $\map{\utilde}{B\setminus\{0\}}{\Xtilde}$ has finite energy $E(\utilde;B) < \infty$. Moreover, there exist constants $\de, C_{A}, r_{0}>0$ such that the following holds. For all $w_{0} \in B\setminus\{0\}$ and all $0 < r < r_{0}$ such that $B_{r}(w_{0}) \subset B\setminus\{0\}$, the section $\utilde$ satisfies the mean value inequality
\begin{eqnarray*} \label{APrioriEstimateForSectionRemovalOfSingualrities}
	E\bigl( \utilde;B_{r}(w_{0}) \bigr) < \de \quad \Longrightarrow \quad \Abs{\dop\!\utilde(w_{0})}^{2}_{\Jtilde} \le \frac{C_{A}}{r^{2}} \cdot E\bigl( \utilde;B_{r}(w_{0}) \bigr) + C_{A}.
\end{eqnarray*}
\end{lemma}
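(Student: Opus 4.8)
The plan is to transfer the a priori estimate for the vortex $(\Phi,\Psi,u)$ supplied by Theorem~\ref{TheoremAPrioriEstimate} to the $\Jtilde$-holomorphic section $\utilde$ by means of the two-sided comparison of metrics in Lemma~\ref{LemmaComparisonOfMetricsOnE}. The basic observation is that $\dop\!\utilde(\del_s) = (\del_s,\del_s u)$ and $\dop\!\utilde(\del_t) = (\del_t,\del_t u)$, so applying Lemma~\ref{LemmaComparisonOfMetricsOnE} with $v\in\{\del_s,\del_t\}$ the combination $w + X_{A(v)}$ becomes exactly $\del_s u + X_\Phi(u)$, respectively $\del_t u + X_\Psi(u)$, whose common norm equals (by the first vortex equation, which is precisely the $(\mathrm{i},\Jtilde)$-holomorphicity of $\utilde$) the square root of the leading summand of the Yang--Mills--Higgs energy density $e \deq e(\Phi,\Psi,u)$. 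Since $\Abs{\del_s}=\Abs{\del_t}=1$ in the flat metric, the comparison lemma then yields pointwise bounds of the form
\[
1 + \Abs{\del_s u + X_\Phi(u)}_J^2 \;\le\; \ABS{\dop\!\utilde}_{\Jtilde}^2 \;\le\; 6\,c_A^2\bigl( 1 + \Abs{\del_s u + X_\Phi(u)}_J^2 \bigr),
\]
so that the energy density of $\utilde$ is comparable to $1 + \Abs{\del_s u + X_\Phi(u)}_J^2 \le 1 + e$.

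For the finiteness statement I would integrate the upper bound against $\lam^2\,\dop\!s\wedge\dop\!t$ over $B$; as $\lam$ is bounded on the compact disc $B$ and $\int_B\Abs{\del_s u + X_\Phi(u)}_J^2 \le E(\Phi,\Psi,u)$, this gives $E(\utilde,B) \le 3\,c_A^2\norm{\lam}_{C^0(B)}^2(\pi + E(\Phi,\Psi,u))<\infty$ by hypothesis (ii). For the mean value inequality I run the comparison in reverse: the lower bound above, together with $\lam \ge \min_B\lam > 0$, controls $\int_{B_r(z_0)}\Abs{\del_s u + X_\Phi(u)}_J^2$ by $2(\min_B\lam)^{-2}E(\utilde;B_r(z_0))$, while the remaining moment-map term of the vortex energy is bounded crudely, using compactness of $\X$, by $\norm{\lam}_{C^0(B)}^2\,(\max_{\X}\Abs{\mu}_\gfr^2)\,\pi r^2$. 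This produces an estimate
\[
E\bigl(\Phi,\Psi,u;B_r(z_0)\bigr) \;\le\; \frac{2}{(\min_B\lam)^2}\,E\bigl(\utilde;B_r(z_0)\bigr) + c\,\pi r^2
\]
with $c$ depending only on $J$, $\lam$, and $\X$. I would then fix $\de>0$ and $r_0>0$ small enough that the right-hand side stays below the threshold $\de_0$ of Theorem~\ref{TheoremAPrioriEstimate} whenever $E(\utilde;B_r(z_0))<\de$ and $r<r_0$, apply that theorem to obtain $\Abs{\del_s u + X_\Phi(u)}_J^2(z_0) \le e(z_0) \le \tfrac{C}{r^2}E(\Phi,\Psi,u;B_r(z_0))$, and substitute first the displayed energy comparison and then the pointwise upper bound for $\ABS{\dop\!\utilde(z_0)}_{\Jtilde}^2$; collecting coefficients gives the claim with $C_A$ the maximum of the resulting constants.

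The one genuinely substantive point, and the reason the right-hand side carries the additive constant $C_A$ rather than being scale invariant as in Theorem~\ref{TheoremAPrioriEstimate}, is that the energy of $\utilde$ only sees the holomorphic part $\Abs{\del_s u + X_\Phi(u)}_J^2$ of the vortex and is blind to the moment-map contribution $\lam^2\Abs{\mu(u)}_\gfr^2$. That term can only be estimated by its $L^\infty$ bound times the area $\pi r^2$, and after division by $r^2$ in the a priori estimate it contributes a term of order $O(1)$; the same effect is visible in the $1+$ arising from the horizontal direction $\Abs{v}^2 = 1$ of the section. This is exactly the discrepancy one expects because $\utilde$ is a section of a bundle over $B$ rather than a curve into a fixed target, and is the feature that forces the use of Lemma~\ref{LemmaComparisonOfMetricsOnE} in place of the standard mean value inequality (which, as noted in the text, fails to apply directly since $\Jtilde$ is merely continuous). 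All remaining steps are elementary applications of the Cauchy--Schwarz and Young inequalities and the compactness of $B$ and $\X$.
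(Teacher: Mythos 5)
Your proposal is correct and follows essentially the same route as the paper: the two-sided metric comparison of Lemma~\ref{LemmaComparisonOfMetricsOnE} is used to bound the energy density of $\utilde$ above and below by $1+\Abs{\del_{s}u+X_{\Phi}(u)}_{J}^{2}$ (up to constants and the $\lam^{2}$ weight), the upper bound gives finiteness from $E(\Phi,\Psi,u)<\infty$ and compactness of $\X$, and the lower bound transfers energy smallness to the vortex so that Theorem~\ref{TheoremAPrioriEstimate} applies, after which the pointwise upper comparison yields the stated inequality. Your identification of the two sources of the additive constant $C_{A}$ (the horizontal $\Abs{v}^{2}=1$ term and the moment-map term bounded only by $O(r^{2})$) matches the paper's computation exactly; the minor differences in numerical constants are immaterial.
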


\begin{proof}
	For every $v \in TB$, using $\dop\!\utilde(v) = ( v,\dop\!u(v) )$ and formula (\ref{EquationTwistedDerivative}), we obtain from Lemma~\ref{LemmaComparisonOfMetricsOnE} the inequalities
\[
\frac{1}{2} \Bigl( \Abs{v}^{2} + \Abs{\dop_{A}\!u(v)}^{2}_{J} \Bigr) \le \Abs{\dop\!\utilde(v)}_{\Jtilde}^{2} \le 3\,c_{A}^{2} \cdot \Bigl( \Abs{v}^{2} + \Abs{\dop_{A}\!u(v)}^{2}_{J} \Bigr).
\]
This implies that
\begin{eqnarray} \label{InequalityComparisonDirichletDensities}
	\frac{1}{2} \Bigl( 2 + \Abs{\dop_{A}\!u}^{2}_{J} \Bigr) \le \Abs{\dop\!\utilde}_{\Jtilde}^{2} \le 3\,c_{A}^{2} \cdot \Bigl( 2 + \Abs{\dop_{A}\!u}^{2}_{J} \Bigr).
\end{eqnarray}
By formula (\ref{EqualityLocalYMHEnergyOfVortex}) we therefore obtain
\begin{eqnarray*}
	E(\utilde;B) &=& \frac{1}{2} \int_{B} \Abs{\dop\!\utilde}_{\Jtilde}^{2}\,\,\le\,\, \frac{3}{2} \, c_{A}^{2} \cdot \int_{B} \Bigl( 2 + \Abs{\dop_{A}\!u}^{2}_{J} \Bigr) \\
	&=& 3 \, c_{A}^{2} \cdot \int_{B} \left( \frac{1}{2} \Abs{\dop_{A}\!u}_{J}^{2} + \lam^{2} \cdot \Abs{\mu(u)}^{2} \right) + 3\pi \, c_{A}^{2} - 3 \, c_{A}^{2} \cdot \int_{B} \left( \lam^{2} \cdot \Abs{\mu(u)}^{2} \right) \\
	&\le& 3\,c_{A}^{2} \cdot E(\Phi,\Psi,u;B) + 3\pi \, c_{A}^{2}.
\end{eqnarray*}
The first term on the right-hand side of this inequality is finite by hypothesis (R2). Hence we have $E(\utilde;B) < \infty$, which proves the first assertion of the lemma.

By Theorem~\ref{TheoremAPrioriEstimate}, there exist constants $\de^{\prime},C>0$ such that the following holds. For all $w_{0}\in B\setminus\{0\}$ and $r>0$ such that $B_{r}(w_{0}) \subset B\setminus\{0\}$ the vortex $(\Phi,\Psi,u)$ satisfies the a priori estimate
\begin{equation} \label{InequalityAPrioriEstimateProofOfMeanValueInequality}
	E\bigl( \Phi,\Psi,u;B_{r}(w_{0}) \bigr) < \de^{\prime} \quad \Longrightarrow \quad e(\Phi,\Psi,u)(w_{0}) \le \frac{C}{r^{2}} \cdot E\bigl( \Phi,\Psi,u;B_{r}(w_{0}) \bigr).
\end{equation}
Define constants
\[
K \deq \pi \cdot \norm{\lam}_{C^{0}(B)}^{2} \cdot \norm{\mu}_{C^{0}(\X)}^{2}
\]
and
\[
\de \deq \frac{\de^{\prime}}{4}, \quad C_{A} \deq 12\,c_{A}^{2} \, \bigl( C (K + 1) + 1 \bigr), \quad r_{0} \deq \min \left\{ \sqrt{\frac{\de^{\prime}}{2K}},1 \right\},
\]
where $c_{A}$ is the constant from inequality (\ref{InequalityConstantForSymplecticFormOnMtilde}). Assume now that
\begin{eqnarray} \label{InequalityAssumptionBoundOnDirichletEnergy}
	r < r_{0} \quad \text{and} \quad E\bigl( \utilde;B_{r}(w_{0}) \bigr) < \de.
\end{eqnarray}
Using the first inequality in (\ref{InequalityComparisonDirichletDensities}) and formula (\ref{EqualityLocalYMHEnergyOfVortex}) we then obtain
\begin{eqnarray*}
	E\bigl( \utilde;B_{r}(w_{0}) \bigr) &=& \frac{1}{2} \int_{B_{r}(w_{0})} \Abs{\dop\!\utilde}_{\Jtilde}^{2} \,\,\ge\,\, \frac{1}{4} \int_{B_{r}(w_{0})} \Bigl( 2 + \Abs{\dop_{A}\!u}^{2}_{J} \Bigr) \\
	&=& \frac{1}{2} \int_{B_{r}(w_{0})} \left( \frac{1}{2} \Abs{\dop_{A}\!u}_{J}^{2} + \lam^{2} \cdot \Abs{\mu(u)}^{2} \right) + \frac{\pi r^{2}}{2} - \frac{1}{2} \int_{B_{r}(w_{0})} \lam^{2} \cdot \Abs{\mu(u)}^{2} \\
	&\ge& \frac{1}{2} \, E\bigl( \Phi,\Psi,u;B_{r}(w_{0}) \bigr) - \frac{1}{2} \, K \, r^{2},
\end{eqnarray*}
whence
\begin{eqnarray} \label{InequalityComparisonYMHAndDirichletEnergy}
	E\bigl( \Phi,\Psi,u;B_{r}(w_{0}) \bigr) \le 2 \, E\bigl( \utilde;B_{r}(w_{0}) \bigr) + K \, r^{2}.
\end{eqnarray}
By assumption (\ref{InequalityAssumptionBoundOnDirichletEnergy}) and the definition of $r_{0}$ above, it follows from this that
\begin{eqnarray*}
	E\bigl( \Phi,\Psi,u;B_{r}(w_{0}) \bigr) < \frac{\de^{\prime}}{2} + K \, r^{2} < \de^{\prime}.
\end{eqnarray*}
Thus the a priori estimate (\ref{InequalityAPrioriEstimateProofOfMeanValueInequality}) implies that
\begin{eqnarray*} \label{InequalityYMHEnergyDensityRemovalOfSingularities}
	e(\Phi,\Psi,u)(w_{0}) \le \frac{C}{r^{2}} \cdot E\bigl( \Phi,\Psi,u;B_{r}(w_{0}) \bigr).
\end{eqnarray*}
Hence, using the second inequality in (\ref{InequalityComparisonDirichletDensities}) and formula (\ref{EqualityLocalYMHEnergyOfVortex}), we further obtain
\begin{eqnarray*}
	\Abs{\dop\!\utilde(w_{0})}_{\Jtilde}^{2} &\le& 3\,c_{A}^{2} \cdot \Bigl( 2 + \Abs{\dop_{A}\!u(w_{0})}^{2}_{J} \Bigr) \\
	&\le& 6\,c_{A}^{2} \cdot \Bigl( \frac{1}{2} \Abs{\dop_{A}\!u(w_{0})}^{2}_{J} + \lam^{2} \cdot \Abs{\mu\bigl( u(w_{0}) \bigr)}^{2} \Bigr) + 6\,c_{A}^{2} \\
	&=& 6\,c_{A}^{2} \cdot e(\Phi,\Psi,u)(w_{0}) + 6\,c_{A}^{2} \\
	&\le& \frac{6\,c_{A}^{2}\,C}{r^{2}} \cdot E\bigl( \Phi,\Psi,u;B_{r}(w_{0}) \bigr) + 6\,c_{A}^{2}.
\end{eqnarray*}
Applying inequality~(\ref{InequalityComparisonYMHAndDirichletEnergy}) again and using $c_{A}>1$, we finally have
\[
\Abs{\dop\!\utilde(w_{0})}_{\Jtilde}^{2} \le \frac{12\,c_{A}^{2}\,C}{r^{2}} \cdot E\bigl( \utilde;B_{r}(w_{0}) \bigr) + 6\,c_{A}^{2} \cdot \bigl( C K + 1 \bigr) \le \frac{C_{A}}{r^{2}} \cdot E\bigl( \utilde;B_{r}(w_{0}) \bigr) + C_{A}.
\]
This proves Lemma \ref{LemmaMeanValueInequalityRemovalOfSingularities}.
\end{proof}

\subsection{Proof of Theorem~\ref{TheoremRemovalOfSingularities}}
\label{SubSectionProofOfTheoremRemoval}

Our proof is adapted from the proof of \cite[Thm.\,4.1.2]{McDuff/J-holomorphic-curves-and-symplectic-topology}. As a first step, we shall apply the isoperimetric inequality from \cite[Thm.\,4.4.1]{McDuff/J-holomorphic-curves-and-symplectic-topology} to estimate the energy of the section $\map{\utilde}{B\setminus\{0\}}{\Xtilde}$ on small neighborhoods around the puncture.

\smallskip

We begin by recalling some notation from \cite[Sec.\,4.4]{McDuff/J-holomorphic-curves-and-symplectic-topology}. For any smooth loop $\map{\ga}{\del\!B}{\Xtilde}$ we denote by $\ell(\ga)$ its length with respect to the metric $\l\cdot,\cdot\r_{\Jtilde}$. If $\ell(\ga)$ is smaller than the injectivity radius of $\Xtilde$, then $\ga$ admits a smooth local extension $\map{u_{\ga}}{B}{\Xtilde}$ such that $u_{\ga}(e^{i\th}) = \ga(\th)$ for all $\th\in[0,2\pi]$ and the image of $u_{\ga}$ is contained in a geodesic ball of radius not greater than half the injectivity radius. The local symplectic action of $\ga$ is then defined as
\[
a(\ga) \deq - \int_{B} u_{\ga}^{\ast}\,\omtilde.
\]
Note that it does not depend on the choice of the extension $u_{\ga}$. Since $\omtilde$ tames $\Jtilde$ by Lemma~\ref{LemmaTamingOnE}, the isoperimetric inequality from \cite[Thm.\,4.4.1]{McDuff/J-holomorphic-curves-and-symplectic-topology} applies to $\Xtilde$; in fact, a careful analysis of the proof of said theorem reveals that the isoperimetric inequality holds in the present situation even though the almost complex structure $\Jtilde$ will in general only be continuous. Thus we have:

\begin{lemma}[{McDuff and Salamon \cite[Thm.\,4.4.1]{McDuff/J-holomorphic-curves-and-symplectic-topology}}] \label{LemmaIsoperimetricInequality}
	For every constant $c > 1/4\pi$ there exists a constant $\ell_{0}>0$ such that
\[
\ell(\ga) < \ell_{0} \quad \Longrightarrow \quad \Abs{a(\ga)} \le c \cdot \ell(\ga)^{2}
\]
for every smooth loop $\map{\ga}{\del\!B}{\Xtilde}$.
\end{lemma}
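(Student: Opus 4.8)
The plan is to deduce this lemma directly from the isoperimetric inequality of McDuff and Salamon \cite{McDuff/J-holomorphic-curves-and-symplectic-topology}, Theorem~4.4.1, applied to the almost K\"ahler manifold $\bigl( \Xtilde,\omtilde,\Jtilde \bigr)$. Note first that $\Xtilde = B \xop \X$ is compact, being a product of the compact disc $B$ and the compact manifold $\X$; that $\omtilde = c_{A}^{2}\,\om_{0} \oplus \om$ is a \emph{smooth} symplectic form; and that $\omtilde$ tames $\Jtilde$ by Lemma~\ref{LemmaTamingOnE}, so that $\l\cdot,\cdot\r_{\Jtilde}$ is a genuine, if only continuous, Riemannian metric on $\Xtilde$. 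The sole point in which the hypotheses of Theorem~4.4.1 fail is that $\Jtilde$ is merely continuous, by (\ref{EquationAlmostComplexStructureOnE}), since the connection form $A$ is only continuous on $B$.

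I would then argue that the proof of Theorem~4.4.1 survives this loss of regularity unchanged. That argument chooses Darboux coordinates for the symplectic form together with a fixed primitive $\beta$ with $\dop\!\beta = \omtilde$, writes the local symplectic action as the boundary integral $a(\ga) = -\int_{\del\!B} \ga^{\ast}\beta$ by Stokes' theorem, and estimates this against the square of the length by comparing $\omtilde$ and its primitive with the metric through the pointwise taming inequality $\ABS{\omtilde(\xi,\eta)} \le \Norm{\omtilde}\,\ABS{\xi}_{\Jtilde}\,\ABS{\eta}_{\Jtilde}$; the sharp constant $1/4\pi$ comes from the Euclidean isoperimetric inequality, and the resulting error is absorbed into any prescribed $c > 1/4\pi$ by shrinking $\ell_{0}$. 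The decisive observation is that the symplectic form $\omtilde$ entering all of these steps is smooth, while the almost complex structure $\Jtilde$ is never differentiated: it enters only through the continuous metric $\l\cdot,\cdot\r_{\Jtilde}$, which is $C^{0}$-close to a constant metric on sufficiently small balls. Hence no derivatives of $\Jtilde$ are required anywhere in the estimate.

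The one genuinely delicate point, which I expect to be the main obstacle, is that a merely continuous metric has no well-defined exponential map, so that the notions of injectivity radius and of the smooth filling disc used to define $a(\ga)$ must be justified. I would handle this by comparing $\l\cdot,\cdot\r_{\Jtilde}$ with the fixed \emph{smooth} background metric $g_{0}\bigl( (v,w),(v,w) \bigr) \deq \Abs{v}^{2} + \Abs{w}_{J}^{2}$ on $\Xtilde$, which is smooth since the family $J$ is. By Lemma~\ref{LemmaComparisonOfMetricsOnE}, together with the uniform bound (\ref{InequalityConstantForSymplecticFormOnMtilde}) on $\ABS{X_{A(v)}}_{J}$, the two metrics are uniformly equivalent on the compact manifold $\Xtilde$. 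Consequently $g_{0}$ supplies a positive injectivity radius and honest geodesic normal coordinates, loops that are short in $\l\cdot,\cdot\r_{\Jtilde}$ are short in $g_{0}$ and therefore bound small geodesic discs, and lengths and symplectic areas computed in the two metrics differ only by fixed multiplicative constants. The entire argument can thus be carried out relative to the smooth metric $g_{0}$, while the final estimate is phrased in terms of $\l\cdot,\cdot\r_{\Jtilde}$. Once this reconciliation of the $C^{0}$ metric with the smooth geodesic framework underlying Theorem~4.4.1 is in place, the inequality follows verbatim.
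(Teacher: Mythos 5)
Your proposal takes the same route as the paper, which simply observes that Theorem~4.4.1 of McDuff--Salamon applies verbatim because its proof never differentiates the almost complex structure, so the mere continuity of $\Jtilde$ is harmless. In fact you supply more detail than the paper does (the comparison with a smooth background metric to justify the injectivity radius and the filling discs), which is a welcome strengthening of the paper's one-sentence justification.
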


We may now prove a variant of \cite[Lemma 4.5.1]{McDuff/J-holomorphic-curves-and-symplectic-topology}. For that purpose, we define a function $\map{\veps}{(0,1]}{\R}$ by
\begin{eqnarray} \label{MapEnergyEpsilon}
	\veps(r) \deq E\bigl( \utilde;B_{r}(0) \bigr) = \frac{1}{2} \int_{0}^{r} \rho \int_{0}^{2\pi} \Abs{\dop\!\utilde \bigl( \rho e^{i\th} \bigr)}_{\Jtilde}^{2} \dop\!\th\dop\!\rho
\end{eqnarray}
that assigns to every $0 < r \le 1$ the energy of the curve $\map{\utilde}{B\setminus\{0\}}{\Xtilde}$ on the punctured disk~$B_{r}(0) \setminus \{0\}$. We see from formula (\ref{MapEnergyEpsilon}) that the function $\veps$ is of class $C^{1}$. Let $\map{\ga_{r}}{\del\!B}{\Xtilde}$ denote the loop defined by $\ga_{r}(\th) \deq \utilde\bigl( r e^{i\th} \bigr)$ for $\th\in[0,2\pi]$.

\begin{lemma} \label{LemmaSpecialIsoperimetricInequality}
	For every constant $c > 1/4\pi$ there exists a constant $r_{1}>0$ such that
\[
0 < r < r_{1} \quad \Longrightarrow \quad \veps(r) \le c \cdot \ell(\ga_{r})^{2}.
\]
\end{lemma}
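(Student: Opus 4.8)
The plan is to combine the isoperimetric inequality of Lemma~\ref{LemmaIsoperimetricInequality} with two facts about the loops $\ga_{r}$: first, that their lengths shrink to zero as $r\to 0$, and second, that the local symplectic action of $\ga_{r}$ equals minus the energy $\veps(r)$. Granting these, the argument is immediate: given $c>1/4\pi$, Lemma~\ref{LemmaIsoperimetricInequality} produces a threshold $\ell_{0}>0$ so that $\ell(\ga)<\ell_{0}$ forces $\abs{a(\ga)}\le c\,\ell(\ga)^{2}$; choosing $r_{1}$ so small that $\ell(\ga_{r})<\ell_{0}$ for all $0<r<r_{1}$, and using $\veps(r)=\abs{a(\ga_{r})}$, we conclude $\veps(r)\le c\,\ell(\ga_{r})^{2}$.

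First I would show that $\ell(\ga_{r})\to 0$ as $r\to 0$. Since $\ga_{r}(\th)=\utilde(re^{\mathrm{i}\th})$, one has $\dot\ga_{r}(\th)=\dop\!\utilde(re^{\mathrm{i}\th})[\mathrm{i}re^{\mathrm{i}\th}]$, hence $\abs{\dot\ga_{r}(\th)}_{\Jtilde}\le r\,\ABS{\dop\!\utilde(re^{\mathrm{i}\th})}_{\Jtilde}$ and therefore $\ell(\ga_{r})\le 2\pi r\sup_{\abs{z}=r}\ABS{\dop\!\utilde(z)}_{\Jtilde}$. To bound the supremum I would apply the mean value inequality of Lemma~\ref{LemmaMeanValueInequalityRemovalOfSingularities} on the discs $B_{r/2}(z_{0})$ with $\abs{z_{0}}=r$: for $r<2/3$ these discs lie in $B\setminus\{0\}$, and since $B_{r/2}(z_{0})\subset B_{3r/2}(0)$ their energy is at most $\veps(3r/2)$, which is $<\de$ once $r$ is small (here I use $\lim_{r\to 0}\veps(r)=0$). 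The mean value inequality then gives $\ABS{\dop\!\utilde(z_{0})}_{\Jtilde}^{2}\le \frac{4C_{A}}{r^{2}}\,\veps(3r/2)+C_{A}$, so that $\ell(\ga_{r})^{2}\le 4\pi^{2}\bigl(4C_{A}\,\veps(3r/2)+C_{A}r^{2}\bigr)\to 0$. Note that the additive constant $C_{A}$ in the mean value inequality contributes only an $O(r^{2})$ term and is harmless.

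Second I would establish $\veps(r)=\abs{a(\ga_{r})}$. Because $\utilde$ is $(\mathrm{i},\Jtilde)$-holomorphic (Lemma~\ref{LemmautildeIsJ_AHolomorphic}) and $\omtilde$ tames $\Jtilde$ (Lemma~\ref{LemmaTamingOnE}), the pullback $\utilde^{\ast}\omtilde$ is the nonnegative energy density, so $\int_{B_{r}(0)}\utilde^{\ast}\omtilde=\veps(r)$. The subtlety is that $\utilde$ is defined only on the punctured disc, so it cannot be used directly as a local extension of $\ga_{r}$ over $0$. I would handle this by an annular exhaustion: on $\{\rho\le\abs{z}\le r\}$ Stokes gives $\int\utilde^{\ast}\omtilde=\veps(r)-\veps(\rho)$, and capping the short inner loop $\ga_{\rho}$ with a small disc of area bounded by $c\,\ell(\ga_{\rho})^{2}$ produces a genuine local extension of $\ga_{r}$; since $\veps(\rho)\to 0$ and $\ell(\ga_{\rho})\to 0$ as $\rho\to 0$, the definition $a(\ga_{r})=-\int_{B}u_{\ga_{r}}^{\ast}\omtilde$ yields $a(\ga_{r})=-\veps(r)$ in the limit, whence $\abs{a(\ga_{r})}=\veps(r)$.

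I expect the main obstacle to be this last identification of the action with the energy across the puncture: the map $\utilde$ is not yet known to extend over $0$, so the capping-and-limiting argument (controlling $\veps(\rho)$ and the cap area simultaneously) must be carried out with care to guarantee that $a(\ga_{r})$, which depends only on $\ga_{r}$, indeed equals $-\veps(r)$. Once these two ingredients are in place, the conclusion follows verbatim from Lemma~\ref{LemmaIsoperimetricInequality} as in the first paragraph.
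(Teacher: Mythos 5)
Your proposal is correct and follows essentially the same route as the paper: bound $\ell(\ga_{r})$ below $\ell_{0}$ via the mean value inequality of Lemma~\ref{LemmaMeanValueInequalityRemovalOfSingularities} (the paper centers discs of radius $r$ inside $B_{2r}(0)$ where you use radius $r/2$ inside $B_{3r/2}(0)$ -- an immaterial difference), identify $\veps(r)$ with $\abs{a(\ga_{r})}$ by the annular exhaustion with geodesic caps whose $\omtilde$-area is killed by the isoperimetric inequality applied to $\ga_{\rho}$, and then invoke Lemma~\ref{LemmaIsoperimetricInequality}. The subtlety you flag about the puncture is exactly the one the paper handles with the contractible sphere $v_{\rho r}$, so no gap remains.
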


\begin{proof}
	Our proof is adapted from the proof of \cite[Lemma 4.5.1]{McDuff/J-holomorphic-curves-and-symplectic-topology}. Let $c > 1/4\pi$, let $\ell_{0}$ be the constant from Lemma~\ref{LemmaIsoperimetricInequality}, and let $\de$, $C_{A}$ and $r_{0}$ be the constants from Lemma~\ref{LemmaMeanValueInequalityRemovalOfSingularities}. Fix a constant~$r_{1}>0$ such that
\begin{eqnarray} \label{InequalityEnergyBoundForr_1RemovalOfSingularities}
	r_{1} < \min\left\{ r_{0},\frac{1}{2} \right\} \quad \text{and} \quad \veps(2r_{1}) < \min\left\{ \de,\frac{\ell_{0}^{2} - 4\pi^{2}\,C_{A}\,r_{1}^{2}}{8\pi^{2}\,C_{A}} \right\}.
\end{eqnarray}
Such $r_{1}$ exists since $\veps(1)=E(\utilde;B) < \infty$ by Lemma~\ref{LemmaMeanValueInequalityRemovalOfSingularities} and the function $\veps$ is nonnegative and nondecreasing with $\lim_{r\to 0}\veps(r) = 0$.

Let now $0 < r < r_{1}$. Then $E(\utilde;B_{r/2}(r e^{i\th})) \le E(\utilde;B_{2r}(0)) = \veps(2r) < \de$ by the second inequality in (\ref{InequalityEnergyBoundForr_1RemovalOfSingularities}). Hence the mean value inequality of Lemma~\ref{LemmaMeanValueInequalityRemovalOfSingularities}, applied to the disk \mbox{$B_{r/2}(r e^{i\th}) \subset B \setminus \{0\}$}, yields
\[
\Abs{\dop\!\utilde\bigl( re^{i\th} \bigr)}_{\Jtilde}^{2} \le \frac{4 \, C_{A}}{r^{2}} \cdot E\bigl( \utilde;B_{r/2}(r e^{i\th}) \bigr) + C_{A} \le \frac{4 \, C_{A}}{r^{2}} \cdot \veps(2r) + C_{A}.
\]
It follows that the derivative of $\ga_{r}$ in the direction of $\th$ satisfies an estimate
\[
\Abs{\dot{\ga_{r}}(\th)}_{\Jtilde} = \frac{r}{\sqrt{2}} \cdot \Abs{\dop\!\utilde\bigl( re^{i\th} \bigr)}_{\Jtilde} \le \sqrt{2 \, C_{A} \cdot \veps(2r) + C_{A} \cdot r^{2}}.
\]
By the second inequality in (\ref{InequalityEnergyBoundForr_1RemovalOfSingularities}) this implies that
\begin{eqnarray} \label{InequalityEstimateForLengthOfLoopRemovalOfSingularities}
	\ell(\ga_{r}) = \int_{0}^{2\pi} \Abs{\dot{\ga_{r}}(\th)} \dop\!\th \le \sqrt{8\pi^{2}\,C_{A} \cdot \veps(2r) + 4\pi^{2}\,C_{A} \cdot r^{2}} < \ell_{0}.
\end{eqnarray}
We now proceed exactly as in the proof of \cite[Lemma 4.5.1]{McDuff/J-holomorphic-curves-and-symplectic-topology}, obtaining $\veps(r) = -a(\ga_{r})$. By (\ref{InequalityEstimateForLengthOfLoopRemovalOfSingularities}) the isoperimetric inequality of Lemma~\ref{LemmaIsoperimetricInequality} applies, so we finally arrive at $\veps(r) \le c \cdot \ell(\ga_{r})^{2}$.
\end{proof}

We are now ready for the actual proof of Theorem~\ref{TheoremRemovalOfSingularities}.

\begin{proof}[Proof of Theorem~\ref{TheoremRemovalOfSingularities}]
	Our proof follows the proof of \cite[Thm.\,4.1.2]{McDuff/J-holomorphic-curves-and-symplectic-topology}. Applying the isoperimetric inequality of Lemma~\ref{LemmaSpecialIsoperimetricInequality} we conclude as in said proof that there exist constants $c > 1/4\pi$ and $c_{1}>0$ such that, for $r>0$ sufficiently small, the function (\ref{MapEnergyEpsilon}) satisfies an inequality
\[
\veps(r) \le c_{1} \cdot r^{2\al},
\]
where $\al \deq 1/4\pi c < 1$. In fact, this argument only requires the function $\veps$ to be of class $C^{1}$. For~$r>0$ sufficiently small, combining this with the mean value inequality of Lemma~\ref{LemmaMeanValueInequalityRemovalOfSingularities} applied to the disk $B_{r/2}(r e^{i\th}) \subset B \setminus \{0\}$, we hence obtain
\begin{eqnarray} \label{InequalityPointwiseEstimateForNormSquareRemovalOfSingularities}
	\Abs{\dop\!\utilde\bigl( r e^{i\th} \bigr)}_{\Jtilde}^{2} \,\le\, \frac{4\,C_{A}}{r^{2}} \cdot \veps(2r) + C_{A} \le c_{2} \cdot r^{-2(1-\al)} + C_{A},
\end{eqnarray}
where $c_{2}>0$ is some constant not depending on $r$ and $\th$. We now proceed exactly as in the proof of \cite[Thm.\,4.1.2]{McDuff/J-holomorphic-curves-and-symplectic-topology}, replacing inequality (4.5.2) in that proof by inequality~(\ref{InequalityPointwiseEstimateForNormSquareRemovalOfSingularities}). Note that we have to keep track of the additive constant $C_{A}$ on the right-hand side of inequality~(\ref{InequalityPointwiseEstimateForNormSquareRemovalOfSingularities}).
\end{proof}

\section{Convergence modulo bubbling}
\label{SectionConvergenceModuloBubbling}

The purpose of this section is to prove Theorem~\ref{TheoremConvergenceModuloBubbling} below, which establishes a compactness result for vortices, ignoring any bubbling phenomena. This theorem constitutes the first part of the proof of Theorem~\ref{TheoremGromovCompactness}.

\smallskip

Similar compactness results were proved by Mundet i Riera \cite{Mundet-i-Riera/Hamiltonian-Gromov-Witten-invariants} in the case of $G = S^{1}$, using a different approach that relies on the compactness results for pseudoholomorphic curves due to Ivashkovich and Shevchishin \cite{Ivashkovich/Gromov-compactness-theorem-for-J-complex-curves-with-boundary}, and by \cite{Cieliebak/The-symplectic-vortex-equations-and-invariants-of-Hamiltonian-group-actions} for arbitrary compact Lie groups~$G$ under the assumption that $\X$ is symplectically aspherical. We shall now prove a generalization of these results that holds for arbitrary compact Lie groups $G$ and arbitrary closed Hamiltonian $G$-manifolds $\X$. Our strategy is to combine the above-mentioned approach of \cite{Cieliebak/The-symplectic-vortex-equations-and-invariants-of-Hamiltonian-group-actions} with the methods that were applied by McDuff and Salamon in proving a similar compactness result for pseudoholomorphic curves, see \cite[Thm.\,4.6.1]{McDuff/J-holomorphic-curves-and-symplectic-topology}. The proof crucially relies on removal of singularities for vortices provided by Theorem \ref{TheoremRemovalOfSingularities}.

We keep the notation introduced in Section \ref{SectionIntroductionAndMainResults}. Moreover, for $p>2$ we denote by $\A^{1,p}(P)$ the space of connections on the bundle $P$ of Sobolev class $W^{1,p}$, and by $W^{1,p}(P,\X)^{G}$ the space of $G$-equivariant maps \mbox{$\map{u}{P}{\X}$} of class $W^{1,p}$. Note that the vortex equations (\ref{EquationVortexEquations}) and the Yang-Mills-Higgs energy (\ref{EquationYMHEnergyGlobal}) are well-defined for pairs $(A,u) \in \A^{1,p}(P) \xop W^{1,p}(P,\X)^{G}$. The action~(\ref{EquationActionOfGaugeTransformation}) of the group of smooth gauge transformations of $P$ then naturally extends to an action of the group $\G^{2,p}(P) \deq W^{2,p}(P,G)^{G}$ of gauge transformations of $P$ of class $W^{2,p}$ on the space $\A^{1,p}(P) \xop W^{1,p}(P,\X)^{G}$ (see \cite[App.\,A]{Wehrheim/Uhlenbeck-compactness} for details on this). The vortex equations (\ref{EquationVortexEquations}) and the Yang-Mills-Higgs energy (\ref{EquationYMHEnergyGlobal}) remain invariant under this action.

\begin{theorem}[Convergence modulo bubbling] \label{TheoremConvergenceModuloBubbling}
	Let $(A_{\nu},u_{\nu})$ be a sequence of vortices whose Yang-Mills-Higgs energy satisfies a uniform bound
\[
\sup_{\nu} E\bigl( A_{\nu},u_{\nu} \bigr) < \infty.
\]
Then there exist a smooth vortex $(A,u)$, a sequence of smooth gauge transformations $g_{\nu} \in \G(P)$, a real number $p>2$, and a finite set $Z = \{z_{1},\ldots,z_{N}\}$ of distinct points on $\Sig$ such that, after passing to a subsequence, the following holds.
\begin{enumerate}[itemsep=0.3ex, itemindent=0cm, labelsep=1ex, leftmargin=1cm]
	\item The sequence $g^{\ast}_{\nu} A_{\nu}$ converges to $A$ weakly in $W^{1,p}$ and strongly in $C^{0}$ on $\Sig$;
	\item the sequence $(g^{\ast}_{\nu} A_{\nu},g^{-1}_{\nu} u_{\nu})$ converges to $(A,u)$ in $C^{\infty}$ on compact subsets of $\Sig \setminus Z$;
	\item for every $j\in\{1,\ldots,N\}$ and every $\veps>0$ such that $B_{\veps}(z_{j}) \cap Z = \{z_{j}\}$, the limit
\[
m_{\veps}(z_{j}) \deq \lim_{\nu\to\infty} E\bigl( g_{\nu}^{\ast} A_{\nu},g_{\nu}^{-1} u_{\nu};B_{\veps}(z_{j}) \bigr)
\]
exists and is a continuous function of $\veps$, and
\[
m(z_{j}) \deq \lim_{\veps\to 0}m_{\veps}(z_{j}) \ge \hbar,
\]
where $\hbar$ is the constant of Corollary \ref{CorollaryAPrioriEstimateOnSurface};
	\item for every compact subset $K \subset \Sig$ such that $Z$ is contained in the interior of $K$,
\[
E\bigl( A,u;K \bigr) + \sum_{j=1}^{N} m(z_{j}) = \lim_{\nu\to\infty} E\bigl( g_{\nu}^{\ast} A_{\nu},g_{\nu}^{-1} u_{\nu};K \bigr).
\]
\end{enumerate}
\end{theorem}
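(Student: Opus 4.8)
The plan is to combine Uhlenbeck compactness for the connection component with the a priori estimate of Corollary~\ref{CorollaryAPrioriEstimateOnSurface} for the energy density, in the spirit of the compactness proof for pseudoholomorphic curves in \cite{McDuff/J-holomorphic-curves-and-symplectic-topology}. First I would exploit the second vortex equation $F_{A_{\nu}} = -\mu(u_{\nu})\,\dvol_{\Sig}$: since $\X$ is compact, $\mu$ is bounded, so the curvatures $F_{A_{\nu}}$ are uniformly bounded in $L^{p}$ for every $p$. By Uhlenbeck compactness (Theorem A in \cite{Wehrheim/Uhlenbeck-compactness}) there is a sequence of gauge transformations $g_{\nu}\in\G^{2,p}(P)$ such that $g_{\nu}^{\ast} A_{\nu}$ is bounded in $W^{1,p}$; after passing to a subsequence it converges weakly in $W^{1,p}$, and hence, by the Sobolev embedding $W^{1,p}\hookrightarrow C^{0}$ in real dimension two, strongly in $C^{0}$, to a connection $A$. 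This establishes assertion (i). Relabelling $g_{\nu}^{\ast}(A_{\nu},u_{\nu})$ as $(A_{\nu},u_{\nu})$, I would from now on assume the connections are already in this gauge.

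Next I would define the bubbling set. Viewing the energy densities as measures of uniformly bounded total mass, a subsequence converges weakly-$\ast$ to a finite Borel measure $m$ on $\Sig$. Declare $z_{0}\in Z$ precisely when every neighbourhood carries at least the quantum $\hbar$ of energy in the limit, i.e.\ $\limsup_{\nu} E(A_{\nu},u_{\nu};B_{\veps}(z_{0}))\ge\hbar$ for all $\veps>0$, where $\hbar$ is the constant of Corollary~\ref{CorollaryAPrioriEstimateOnSurface}. Since each such point carries at least $\hbar$ of the total energy, the uniform bound forces $Z=\{z_{1},\dots,z_{N}\}$ to be finite, with $N\le\hbar^{-1}\sup_{\nu} E(A_{\nu},u_{\nu})$.

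The heart of the argument is the $C^{\infty}$ convergence on $\Sig\setminus Z$. For a compact $K\subset\Sig\setminus Z$ I would choose $\veps$ so small that $\sup_{\nu} E(A_{\nu},u_{\nu};B_{\veps}(z))<\hbar$ for all $z\in K$ and all large $\nu$; Corollary~\ref{CorollaryAPrioriEstimateOnSurface} then yields a uniform pointwise bound on $\tfrac{1}{2}\Abs{\dop_{A_{\nu}}\!u_{\nu}}^{2}+\Abs{\mu(u_{\nu})}^{2}$ over $K$. Together with the $C^{0}$ bound on $A_{\nu}$ this gives a uniform $L^{\infty}$ bound on $\dop u_{\nu}$, and I would then bootstrap: treating the first vortex equation $\delbar_{J,A_{\nu}}(u_{\nu})=0$ as a perturbed $\delbar$-equation and the gauge-fixed connection equations (Coulomb gauge together with $F_{A_{\nu}}=-\mu(u_{\nu})\,\dvol_{\Sig}$) as an elliptic system, elliptic regularity and the usual iteration for the coupled system produce uniform $C^{k}$ bounds on $(A_{\nu},u_{\nu})$ over $K$ for every $k$. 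Arzel\`a--Ascoli then extracts a subsequence converging in $C^{\infty}$ on compact subsets of $\Sig\setminus Z$ to a smooth vortex $(A,u)$ on $\Sig\setminus Z$, whose connection part agrees with the $C^{0}$-limit $A$ found above. This proves assertion (ii). To see that $(A,u)$ is in fact a smooth vortex on all of $\Sig$, I would observe that its limiting energy density is integrable near each $z_{j}$ and that $A$ extends continuously across $z_{j}$, so the hypotheses of the Removable Singularity Theorem~\ref{TheoremRemovalOfSingularities} are met; hence $u$ extends to a section of class $W^{1,p}$, and a final bootstrap in a smooth gauge makes $(A,u)$ a smooth vortex on $\Sig$.

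Finally, assertions (iii) and (iv) follow from the structure of the limiting measure. Writing $m_{\veps}(z_{j})=\lim_{\nu} E(A_{\nu},u_{\nu};B_{\veps}(z_{j}))$, the $C^{\infty}$ convergence on annuli shows $m_{\veps}(z_{j})$ is continuous in $\veps$, and the definition of $Z$ gives $m(z_{j})=\lim_{\veps\to0}m_{\veps}(z_{j})\ge\hbar$. For the energy identity I would split a compact $K$ with $Z$ in its interior into $K\setminus\bigcup_{j}B_{\veps}(z_{j})$ and the discs $B_{\veps}(z_{j})$: on the former the $C^{\infty}$ convergence gives convergence of energies to $E(A,u;K\setminus\bigcup_{j}B_{\veps}(z_{j}))$, on the latter the energies converge to $m_{\veps}(z_{j})$; letting $\veps\to0$ and using integrability of the limit energy density yields assertion (iv). The main obstacle will be the coupled elliptic bootstrapping of the third step---upgrading the regularity of connection and section simultaneously from the a priori $L^{\infty}$ bound on the energy density---and, entangled with it, the careful interchange of the limits in $\nu$ and $\veps$ required for the energy identities.
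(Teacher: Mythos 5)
Your overall strategy is the paper's: Uhlenbeck compactness for the connections, the a priori estimate of Corollary~\ref{CorollaryAPrioriEstimateOnSurface} to get uniform derivative bounds away from finitely many concentration points, elliptic bootstrapping for the $C^{\infty}$ convergence, Theorem~\ref{TheoremRemovalOfSingularities} to extend the limit across $Z$, and the standard energy bookkeeping for assertions (iii) and (iv). The one harmless deviation is that you define $Z$ by energy concentration ($\limsup_{\nu}E\ge\hbar$ on every disc), whereas the paper defines singular points by blow-up of $\abs{\dop_{A_{\nu}}\!u_{\nu}}$ and then proves, again via the a priori estimate, that each such point absorbs at least $\hbar$ of energy; the two definitions agree, and your version makes (iii) slightly more immediate.

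There is, however, one concrete gap: the Coulomb gauge conditions on which your bootstrap rests are asserted but never obtained. Uhlenbeck compactness only produces gauge transformations making $g_{\nu}^{\ast}A_{\nu}$ bounded in $W^{1,p}$; it does not put the whole sequence in Coulomb gauge relative to a single connection. Without the relations $\dop_{A_{0}}^{\ast}(A-A_{0})=0$ and $\dop_{A}^{\ast}(g_{\nu}^{\ast}A_{\nu}-A)=0$ you have no elliptic equation for the connection components: the curvature equation $F_{A_{\nu}}=-\mu(u_{\nu})\dvol_{\Sig}$ alone controls only $\dop A_{\nu}$ and not $\dop^{\ast}A_{\nu}$, so the ``coupled elliptic system'' you propose to iterate does not close. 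The fix is the content of the paper's Lemma~\ref{LemmaWeakLimitCompactification}: after extracting the weak $W^{1,p}$-limit of the Uhlenbeck-gauged sequence, one applies the local slice theorem (Theorem F in \cite{Wehrheim/Uhlenbeck-compactness}) twice --- once to put the limit in Coulomb gauge relative to a nearby smooth reference connection $A_{0}$ (which is also what ultimately makes the limit connection, and hence the limit vortex, smooth), and once more, using that $\norm{g_{\nu}^{\ast}A_{\nu}-A}_{L^{p}(\Sig)}\to 0$, to correct each $g_{\nu}^{\ast}A_{\nu}$ into Coulomb gauge relative to $A$. This second application is only possible after the weak limit has been identified, which is why the gauge fixing cannot be folded into the initial Uhlenbeck step as your write-up suggests. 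With this supplied, the rest of your argument goes through as in the paper.
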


The proof of Theorem~\ref{TheoremConvergenceModuloBubbling} will occupy the remainder of this section. It is much inspired by the proofs of \cite[Thm.\,4.6.1]{McDuff/J-holomorphic-curves-and-symplectic-topology} and \cite[Thm.\,3.2]{Cieliebak/The-symplectic-vortex-equations-and-invariants-of-Hamiltonian-group-actions}. We shall proceed in several steps. First, in Section \ref{SubSectionSingularPoints} we prove that bubbling may occur at only finitely many points. We then apply weak Uhlenbeck compactness and a local slice theorem for the action of the group of gauge transformations in order to construct a limit connection, in Section~\ref{SubSectionUhlenbeckCompactnessAndCoulombGauge}. Section \ref{SubSectionTheLimitEquations} is concerned with the study of the limit vortex equations on the complement of the bubbling points. Next, in Section \ref{SubSectionRemovalOfSingularities} we apply removal of singularities in order to obtain the limit vortex. Finally, in Section \ref{SubSectionProofOfTheoremConvergenceModuloBubbling} we combine the previous results in order to prove Theorem~\ref{TheoremConvergenceModuloBubbling}. 

\smallskip

Throughout this section, let $(A_{\nu},u_{\nu})$ be a sequence of vortices whose Yang-Mills-Higgs energy satisfies a uniform bound
\[
\sup_{\nu} E\bigl( A_{\nu},u_{\nu} \bigr) < \infty.
\]

\subsection{Singular points}
\label{SubSectionSingularPoints}

Following the terminology in \cite[Sec.\,4.6]{McDuff/J-holomorphic-curves-and-symplectic-topology} a point $z \in \Sig$ is called \emph{singular} for the sequence $(A_{\nu},u_{\nu})$ if there exists a sequence $z^{\nu}$ of points in $\Sig$ converging to $z$ such that $\Abs{\dop_{A_{\nu}}\!u_{\nu}(z^{\nu})}_{J} \to \infty$. The main result of this subsection is that the sequence $(A_{\nu},u_{\nu})$ can have only finitely many singular points. Basically, this is an immediate consequence of quantization of energy for pseudoholomorphic spheres \cite[Thm.\,3.4]{Cieliebak/The-symplectic-vortex-equations-and-invariants-of-Hamiltonian-group-actions}. However, we prefer to give a much shorter alternative proof of this fact using an indirect argument due to Wehrheim \cite{Wehrheim/Energy-quantization-and-mean-value-inequalities-for-nonlinear-boundary-value-problems} that is based on the a priori estimate of Corollary~\ref{CorollaryAPrioriEstimateOnSurface} and avoids an explicit construction of bubbles.

\begin{lemma} \label{LemmaLowerEnergyBoundForBubbles}
	Let $z$ be a singular point of the sequence $(A_{\nu},u_{\nu})$. Then
\[
\liminf_{\nu\to\infty} E\bigl( A_{\nu},u_{\nu};B_{\veps}(z) \bigr) \ge \hbar
\]
for every $0 < \veps < R$, where $\hbar$ and $R$ are the constants from Corollary \ref{CorollaryAPrioriEstimateOnSurface}.
\end{lemma}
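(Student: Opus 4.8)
The plan is to argue by contradiction, turning the a priori estimate of Corollary~\ref{CorollaryAPrioriEstimateOnSurface} into a uniform pointwise bound on the energy density whenever the local energy stays below the threshold $\hbar$. Since a singular point is by definition a point at which the energy density blows up, such a bound is incompatible with $z$ being singular, and this forces the local energy to reach $\hbar$ in the limit. This is exactly the contrapositive formulation of the estimate alluded to in the text as the trick of K.~Wehrheim, which avoids ever constructing a bubble explicitly.

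Suppose then that the assertion fails, so that there is some $0 < \veps \le R$ with $\liminf_{\nu\to\infty} E\bigl(A_{\nu},u_{\nu};B_{\veps}(z)\bigr) < \hbar$. After passing to a subsequence, which I relabel, I may assume that
\[
E\bigl(A_{\nu},u_{\nu};B_{\veps}(z)\bigr) < \hbar
\]
for all $\nu$. Because $z$ is singular, there is a sequence $z^{\nu} \to z$ in $\Sig$ with $\ABS{\dop_{A_{\nu}}\!u_{\nu}(z^{\nu})}_{J} \to \infty$, and this divergence of course persists along the chosen subsequence.

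Next I would exploit the convergence $z^{\nu} \to z$. For $\nu$ large enough one has $z^{\nu}\in B_{\veps/2}(z)$, so the triangle inequality for the geodesic distance gives the inclusion $B_{\veps/2}(z^{\nu}) \subset B_{\veps}(z)$ and hence
\[
E\bigl(A_{\nu},u_{\nu};B_{\veps/2}(z^{\nu})\bigr) \le E\bigl(A_{\nu},u_{\nu};B_{\veps}(z)\bigr) < \hbar.
\]
Since $0 < \veps/2 \le R$, Corollary~\ref{CorollaryAPrioriEstimateOnSurface} applies at the centre $z_{0}=z^{\nu}$ with radius $r=\veps/2$ and yields
\[
\frac{1}{2}\ABS{\dop_{A_{\nu}}\!u_{\nu}(z^{\nu})}_{J}^{2} \le \frac{1}{2}\ABS{\dop_{A_{\nu}}\!u_{\nu}(z^{\nu})}_{J}^{2} + \ABS{\mu(u_{\nu}(z^{\nu}))}_{\gfr}^{2} \le \frac{4C}{\veps^{2}}\, E\bigl(A_{\nu},u_{\nu};B_{\veps/2}(z^{\nu})\bigr) < \frac{4C\hbar}{\veps^{2}}.
\]
The right-hand side is a finite constant independent of $\nu$, so $\ABS{\dop_{A_{\nu}}\!u_{\nu}(z^{\nu})}_{J}$ remains bounded, contradicting $\ABS{\dop_{A_{\nu}}\!u_{\nu}(z^{\nu})}_{J}\to\infty$. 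Therefore $\liminf_{\nu\to\infty} E\bigl(A_{\nu},u_{\nu};B_{\veps}(z)\bigr) \ge \hbar$ for every $0 < \veps \le R$.

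The whole argument is light once Corollary~\ref{CorollaryAPrioriEstimateOnSurface} is available, since all the analytic work is buried there. The only point needing a little care is the joint handling of the two subsequences -- the one realizing the $\liminf$ of the local energies and the defining sequence $z^{\nu}$ of the singular point -- together with the elementary ball inclusion $B_{\veps/2}(z^{\nu})\subset B_{\veps}(z)$ valid for large $\nu$; I do not anticipate any genuine obstacle, as the lemma merely repackages the a priori estimate via its contrapositive.
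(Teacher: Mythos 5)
Your proof is correct and follows essentially the same route as the paper: assume the local energy stays below $\hbar$ along a subsequence, apply Corollary~\ref{CorollaryAPrioriEstimateOnSurface} on the discs $B_{\veps/2}(z^{\nu})\subset B_{\veps}(z)$, and derive a uniform bound on $\ABS{\dop_{A_{\nu}}\!u_{\nu}(z^{\nu})}_{J}$ contradicting singularity. Your phrasing of the contradiction (boundedness versus divergence) is in fact slightly cleaner than the paper's comparison against the explicit threshold $8C\hbar/\veps^{2}$, but the argument is the same.
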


\begin{proof}
	Our proof is adapted from the proof of \cite[Thm.\,2.1]{Wehrheim/Energy-quantization-and-mean-value-inequalities-for-nonlinear-boundary-value-problems}. Let $\hbar$, $R$ and $C$ be the constants from Corollary \ref{CorollaryAPrioriEstimateOnSurface}. Let $z$ be a singular point of the sequence $(A_{\nu},u_{\nu})$, and assume for contradiction that
\[
\liminf_{\nu\to\infty} E\bigl( A_{\nu},u_{\nu};B_{\veps}(z) \bigr) < \hbar
\]
for some $0 < \veps < R$. Since $z$ is singular there exists a sequence $z^{\nu}$ converging to $z$ such that $\abs{\dop_{A_{\nu}}\!u_{\nu}(z^{\nu})}_{J} \to \infty$. Hence there exists $\nu_{0}$ such that
\begin{equation} \label{InequalityBadPoint}
	z^{\nu_{0}}\in B_{\veps/2}(z), \quad E\bigl( A_{\nu},u_{\nu};B_{\veps/2}(z^{\nu_{0}}) \bigr) < \hbar, \quad \Abs{\dop_{A_{\nu}}\!u_{\nu}(z^{\nu_{0}})}_{J} > \frac{8\,C \hbar}{\veps^{2}}.
\end{equation}
We may therefore apply the a priori estimate from Corollary \ref{CorollaryAPrioriEstimateOnSurface} to the vortex $(A_{\nu},u_{\nu})$ on the disk $B_{\veps/2}(z^{\nu_{0}})$, obtaining
\[
\frac{1}{2} \Abs{\dop_{A_{\nu}}\!u_{\nu}(z^{\nu_{0}})}_{J}^{2} \le \frac{4\,C}{\veps^{2}} \cdot E\bigl( A_{\nu},u_{\nu};B_{\veps/2}(z^{\nu_{0}}) \bigr).
\]
Using the second inequality in (\ref{InequalityBadPoint}), we further infer
\[
\Abs{\dop_{A_{\nu}}\!u_{\nu}(z^{\nu_{0}})}_{J}^{2} < \frac{8\,C \hbar}{\veps^{2}},
\]
which contradicts the third inequality in (\ref{InequalityBadPoint}).
\end{proof}

Since $\sup_{\nu} E(A_{\nu},u_{\nu}) < \infty$ by assumption, it follows from Lemma~\ref{LemmaLowerEnergyBoundForBubbles} that the sequence $(A_{\nu},u_{\nu})$ has finitely many singular points. More specifically, we have the following result.

\begin{lemma} \label{LemmaSingularPoints}
	After passing to a subsequence, the sequence $(A_{\nu},u_{\nu})$ has a finite set
\[
Z = \{z_{1},\ldots,z_{N}\}
\]
of singular points in $\Sig$ and satisfies
\begin{eqnarray*}
	\sup_{\nu} \Norm{\dop_{A_{\nu}}\!u_{\nu}}_{L^{\infty}(K)} < \infty
\end{eqnarray*}
for every compact subset $K \subset \Sig \setminus Z$.
\end{lemma}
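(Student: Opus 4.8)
The plan is to combine the energy lower bound of Lemma~\ref{LemmaLowerEnergyBoundForBubbles} with an extremal choice of subsequence. Throughout write $E_{0} \deq \sup_{\nu} E(A_{\nu},u_{\nu}) < \infty$.

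The first step is to bound the number of singular points of \emph{any} sequence of vortices with $\sup_{\nu} E(A_{\nu},u_{\nu}) \le E_{0}$ by the fixed number $E_{0}/\hbar$. Indeed, if $z_{1},\ldots,z_{k}$ are distinct singular points, I would choose $\veps$ with $0 < \veps \le R$ so small that the geodesic discs $B_{\veps}(z_{j})$ are pairwise disjoint. Lemma~\ref{LemmaLowerEnergyBoundForBubbles} gives $\liminf_{\nu\to\infty} E\bigl( A_{\nu},u_{\nu};B_{\veps}(z_{j}) \bigr) \ge \hbar$ for each $j$, while disjointness yields $\sum_{j=1}^{k} E\bigl( A_{\nu},u_{\nu};B_{\veps}(z_{j}) \bigr) \le E_{0}$ for every $\nu$. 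Taking the limit inferior and using its superadditivity gives $k\hbar \le E_{0}$, hence $k \le E_{0}/\hbar$.

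The second step is the observation that singular points are \emph{preserved} under passage to subsequences: if a sequence $z^{\nu} \to z$ with $\ABS{\dop_{A_{\nu}}\!u_{\nu}(z^{\nu})}_{J} \to \infty$ exhibits $z$ as singular for $(A_{\nu},u_{\nu})$, then its restriction to any subsequence still converges to $z$ with twisted derivative blowing up. Hence the number of singular points can only grow along subsequences while remaining bounded by $E_{0}/\hbar$. I would therefore pass to a subsequence, still denoted $(A_{\nu},u_{\nu})$, whose set of singular points $Z = \{z_{1},\ldots,z_{N}\}$ has \emph{maximal} cardinality $N$ among all subsequences; this maximum is attained because the number of singular points ranges over a finite set of integers. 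This $Z$ is the asserted finite singular set.

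It remains to prove the $L^{\infty}$-bound, and here the maximality of $N$ does the work. Suppose, for contradiction, that $\sup_{\nu}\Norm{\dop_{A_{\nu}}\!u_{\nu}}_{L^{\infty}(K)} = \infty$ for some compact $K \subset \Sig \setminus Z$. After passing to a further subsequence there are points $w^{\nu} \in K$ with $\ABS{\dop_{A_{\nu}}\!u_{\nu}(w^{\nu})}_{J} \to \infty$, and since $K$ is compact I may assume $w^{\nu} \to w \in K$. Then $w$ is a singular point of this subsequence lying outside $Z$; as the points of $Z$ remain singular by the preservation property of the second step, this subsequence has at least $N+1$ singular points, contradicting maximality. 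Therefore $\sup_{\nu}\Norm{\dop_{A_{\nu}}\!u_{\nu}}_{L^{\infty}(K)} < \infty$ for every compact $K \subset \Sig \setminus Z$, as claimed. I expect the main obstacle to be the careful handling of the notion of singular point under extraction; the argument hinges precisely on the two facts that singular points persist under passing to subsequences and that their number is uniformly bounded, which together legitimize the extremal choice of $N$.
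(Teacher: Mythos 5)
Your argument is correct and rests on exactly the two ingredients the paper uses: the energy quantization of Lemma~\ref{LemmaLowerEnergyBoundForBubbles}, which bounds the number of singular points of any subsequence by $E_{0}/\hbar$, and the extraction of a new singular point from unboundedness of $\ABS{\dop_{A_{\nu}}\!u_{\nu}}_{J}$ on a compact set. The only difference is organizational: you make an extremal choice of subsequence (maximal number of singular points) up front and derive the $L^{\infty}$-bound by contradiction, whereas the paper runs the equivalent induction, adjoining one singular point at a time and terminating by the same finiteness bound.
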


\begin{proof}
	The proof of this lemma is word by word the same as that of the Claim in the proof of \cite[Thm.\,4.6.1]{McDuff/J-holomorphic-curves-and-symplectic-topology}.
\end{proof}

By Lemma \ref{LemmaSingularPoints} we may henceforth assume that the sequence $(A_{\nu},u_{\nu})$ has finitely many singular points $Z \deq \{z_{1},\ldots,z_{N}\}$ and satisfies
\begin{eqnarray} \label{EstimateUniformL^inftyBoundOnTwistedDerivatives}
	\sup_{\nu} \Norm{\dop_{A_{\nu}}\!u_{\nu}}_{L^{\infty}(K)} < \infty
\end{eqnarray}
for every compact subset $K \subset \Sig \setminus Z$.

\subsection{Uhlenbeck compactness and Coulomb gauge}
\label{SubSectionUhlenbeckCompactnessAndCoulombGauge}

We investigate the convergence properties of the sequence $(A_{\nu},u_{\nu})$ more closely from the gauge-theoretic point of view.

\begin{lemma} \label{LemmaWeakLimitCompactification}
	Fix $p>2$. There exist a pair $(A,u)$ consisting of a connection $A\in\A^{1,p}(P)$ on $P$ and a section $u\in W^{1,p}_{\mathrm{loc}}(\Sig \setminus Z,P(\X))$ of the bundle $P(\X)$ defined on $\Sig \setminus Z$, a smooth reference connection $A_{0}\in\A(P)$, and a sequence of gauge transformations $g_{\nu} \in \G^{2,p}(P)$ such that the following holds.
\begin{enumerate}[itemsep=0.3ex, itemindent=0cm, labelsep=1ex, leftmargin=1cm]
	\item The connection $A$ is in Coulomb gauge relative to $A_{0}$ on $\Sig$, that is,
\[
\dop_{A_{0}}^{\ast}(A - A_{0}) = 0.
\]
	\item After passing to a subsequence, the sequence $(g_{\nu}^{\ast}A_{\nu},g_{\nu}^{-1}u_{\nu})$ converges to $(A,u)$ in the following sense.
\begin{enumerate}[topsep=1ex, itemsep=0.3ex, itemindent=0cm, labelsep=1ex, leftmargin=0.9cm]
	\item The sequence $g^{\ast}_{\nu} A_{\nu}$ converges to $A$ weakly in $W^{1,p}$ and strongly in $C^{0}$ on $\Sig$;
	\item the sequence $g^{-1}_{\nu} u_{\nu}$ converges to $u$ weakly in $W^{1,p}$ and strongly in $C^{0}$ on compact subsets of $\Sig \setminus Z$;
	\item every $g_{\nu}^{\ast} A_{\nu}$ is in Coulomb gauge relative to $A$ on $\Sig$, that is,
\[
\dop_{A}^{\ast}\bigl( g^{\ast}_{\nu} A_{\nu} - A \bigr) = 0.
\]
\end{enumerate}
\end{enumerate}
\end{lemma}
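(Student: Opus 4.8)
The plan is to combine Uhlenbeck's weak compactness theorem with the local Coulomb slice theorem to control the connections, and then to exploit the uniform bound on the twisted derivatives from (\ref{EstimateUniformL^inftyBoundOnTwistedDerivatives}) to control the maps. First I note that the second vortex equation gives $F_{A_{\nu}} = -\mu(u_{\nu})\dvol_{\Sig}$, and since $\X$ is compact the moment map $\mu$ is bounded, so $\sup_{\nu} \Norm{F_{A_{\nu}}}_{L^{p}(\Sig)} < \infty$ for every $p>2$. Applying the weak form of Uhlenbeck compactness (Theorem~A in \cite{Wehrheim/Uhlenbeck-compactness}), after passing to a subsequence I obtain gauge transformations $h_{\nu} \in \G^{2,p}(P)$ and a connection $A' \in \A^{1,p}(P)$ with $h_{\nu}^{\ast} A_{\nu} \to A'$ weakly in $W^{1,p}$ and, by the compact Sobolev embedding $W^{1,p}(\Sig) \hookrightarrow C^{0}(\Sig)$ valid for $p>2$ on the surface $\Sig$, strongly in $C^{0}$. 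Choosing a smooth reference connection $A_{0} \in \A(P)$ that is $W^{1,p}$-close to $A'$ and applying the local Coulomb slice theorem, I find a gauge transformation close to the identity carrying $A'$ to a connection $A$ with $\dop_{A_{0}}^{\ast}(A - A_{0}) = 0$; absorbing it into the $h_{\nu}$, I may assume that $h_{\nu}^{\ast} A_{\nu} \to A$, which is assertion (i).

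To arrange the relative Coulomb condition (ii)(c) I invoke the local slice theorem centred at $A$. Since $h_{\nu}^{\ast} A_{\nu} \to A$ strongly in $C^{0}$ and weakly in $W^{1,p}$, for large $\nu$ the connection $h_{\nu}^{\ast} A_{\nu}$ lies in a small $W^{1,p}$-neighbourhood of $A$, so there is a unique gauge transformation $k_{\nu} \in \G^{2,p}(P)$ close to the identity with $\dop_{A}^{\ast}\bigl( k_{\nu}^{\ast}(h_{\nu}^{\ast} A_{\nu}) - A \bigr) = 0$ and $k_{\nu} \to \id$ in $\G^{2,p}(P)$. Setting $g_{\nu} \deq h_{\nu} k_{\nu}$ leaves the convergence $g_{\nu}^{\ast} A_{\nu} \to A$ intact, both weakly in $W^{1,p}$ and strongly in $C^{0}$, which establishes (ii)(a) and (ii)(c).

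For the maps I use the equivariance of the twisted derivative, $\dop_{g_{\nu}^{\ast} A_{\nu}}(g_{\nu}^{-1} u_{\nu}) = g_{\nu}^{-1} \dop_{A_{\nu}}\!u_{\nu}$, together with the $G$-invariance of the metric $\l\cdot,\cdot\r_{J}$, which gives $\ABS{\dop_{g_{\nu}^{\ast} A_{\nu}}(g_{\nu}^{-1} u_{\nu})}_{J} = \ABS{\dop_{A_{\nu}}\!u_{\nu}}_{J}$ pointwise. By (\ref{EstimateUniformL^inftyBoundOnTwistedDerivatives}) this is uniformly bounded in $L^{\infty}(K)$ for every compact $K \subset \Sig \setminus Z$. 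Writing $\dop(g_{\nu}^{-1} u_{\nu}) = \dop_{g_{\nu}^{\ast} A_{\nu}}(g_{\nu}^{-1} u_{\nu}) - X_{g_{\nu}^{\ast} A_{\nu}}(g_{\nu}^{-1} u_{\nu})$ and using the uniform $C^{0}$-bound on $g_{\nu}^{\ast} A_{\nu}$ together with the compactness of $\X$, the ordinary derivative $\dop(g_{\nu}^{-1} u_{\nu})$ is uniformly bounded in $L^{\infty}(K)$; since $\X$ is compact the sections themselves are bounded as well. A diagonal subsequence therefore converges weakly in $W^{1,p}_{\loc}(\Sig \setminus Z, P(\X))$ and strongly in $C^{0}$ on compact subsets of $\Sig \setminus Z$ to a limit section $u$, which is (ii)(b). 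Finally, assertion (iii) is immediate from the gauge invariance of the Yang-Mills-Higgs energy, $E(g_{\nu}^{\ast} A_{\nu}, g_{\nu}^{-1} u_{\nu}) = E(A_{\nu}, u_{\nu})$, which is bounded by hypothesis.

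The main obstacle is the interplay of the two gauge-fixing steps: one must achieve the relative Coulomb condition for the limit $A$ and, simultaneously, for every sequence element $g_{\nu}^{\ast} A_{\nu}$, without destroying the weak $W^{1,p}$ and strong $C^{0}$ convergence. This is exactly what the local slice theorem secures, by guaranteeing that the correcting gauge transformations $k_{\nu}$ tend to the identity in $\G^{2,p}(P)$, so that composing with them preserves all convergence statements.
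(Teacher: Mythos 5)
Your proposal is correct and follows essentially the same route as the paper: a uniform $L^{p}$ curvature bound from the second vortex equation, Uhlenbeck compactness, two applications of the local slice theorem (first to put the limit $A$ in Coulomb gauge relative to a nearby smooth $A_{0}$, then to put each $g_{\nu}^{\ast}A_{\nu}$ in Coulomb gauge relative to $A$), followed by uniform $W^{1,p}$-bounds on the maps over compact subsets of $\Sig\setminus Z$ deduced from the $L^{\infty}$-bound on the twisted derivatives, and gauge invariance of the energy for the last assertion. The only cosmetic difference is that the paper concludes the convergence of $g_{\nu}^{\ast}A_{\nu}$ directly from the $L^{p}$- and $W^{1,p}$-estimates supplied by the slice theorem (via Banach--Alaoglu and Rellich) rather than by arguing that the correcting gauge transformations tend to the identity.
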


\begin{proof}
	Our proof is a variant of the arguments in the proofs of \cite[Thm.\,3.1 and Thm.\,3.2]{Cieliebak/The-symplectic-vortex-equations-and-invariants-of-Hamiltonian-group-actions}. Since $(A_{\nu},u_{\nu})$ solves the second vortex equation
\[
F_{A_{\nu}} = - \mu(u_{\nu}) \dvol_{\Sig},
\]
compactness of $\X$ yields a uniform $L^{p}$-bound for the sequence $F_{A_{\nu}}$. Hence by weak Uhlenbeck compactness (see \cite{Uhlenbeck/Connections-with-Lp-bounds-on-curvature} and \cite[Thm.\,A]{Wehrheim/Uhlenbeck-compactness}) there exists a sequence of gauge transformations \mbox{$h_{\nu}\in\G^{2,p}(P)$} such that the sequence $h_{\nu}^{\ast} A_{\nu}$ is uniformly bounded in $W^{1,p}$. It follows from the Banach-Alaoglu theorem that there exists a connection $\Atilde \in \A^{1,p}(P)$ such that, after passing to a subsequence, $h_{\nu}^{\ast} A_{\nu}$ converges to $\Atilde$ weakly in $W^{1,p}$.

Now we apply the local slice theorem \cite[Thm.\,F]{Wehrheim/Uhlenbeck-compactness}. We take $\Atilde$ as reference connection and choose a smooth connection $A_{0} \in \A(P)$ such that $\norm{\Atilde - A_{0}}_{W^{1,p}(\Sig)}$ (and hence also $\norm{\Atilde - A_{0}}_{L^{p}(\Sig)}$) is sufficiently small. Then the local slice theorem (taking $q=p$) asserts the existence of a gauge transformation $h\in\G^{2,p}(P)$ such that
\[
\dop^{\ast}_{\Atilde}\bigl( h_{\ast}A_{0} - \Atilde \bigr) = 0.
\]
By \cite[Lemma 8.4\,(iv)]{Wehrheim/Uhlenbeck-compactness}) this implies that
\[
\dop^{\ast}_{A_{0}}\bigl( h^{\ast} \Atilde - A_{0} \bigr) = 0.
\]
Define
\[
A \deq h^{\ast}\Atilde \in \A^{1,p}(P).
\]
Then $A$ is in Coulomb gauge relative to $A_{0}$ on $\Sig$. This proves (i).

Since $h_{\nu}^{\ast} A_{\nu}$ converges to $\Atilde$ weakly in $W^{1,p}$ as we have seen above, the sequence $h^{\ast} h_{\nu}^{\ast} A_{\nu}$ converges to $A = h^{\ast}\Atilde$ weakly in~$W^{1,p}$. In particular, it follows by the Sobolev embedding theorem and Rellich's theorem that, after passing to a subsequence, $h^{\ast} h_{\nu}^{\ast} A_{\nu}$ converges to $A$ strongly in $C^{0}$. Now we apply the local slice theorem a second time, taking~$A$ as reference connection. By what we just proved, we have
\[
\lim_{\nu\to\infty}\Norm{h^{\ast} h_{\nu}^{\ast} A_{\nu} - A}_{L^{p}(\Sig)} = 0, \quad \sup_{\nu}\Norm{h^{\ast} h_{\nu}^{\ast} A_{\nu} - A}_{W^{1,p}(\Sig)} < \infty.
\]
Hence by the local slice theorem there exist $\hat{h}_{\nu}\in\G^{2,p}(P)$ such that
\begin{eqnarray} \label{EqualityCoulombGauge2CompactificationWithBoundedDerivatives}
	\dop^{\ast}_{A}\bigl( \hat{h}_{\nu}^{\ast} h^{\ast} h_{\nu}^{\ast} A_{\nu} - A \bigr) = 0,
\end{eqnarray}
\begin{eqnarray} \label{ExpressionBoundsForGaugedSequenceCompactificationFirst}
	\lim_{\nu\to\infty}\Norm{\hat{h}_{\nu}^{\ast} h^{\ast} h_{\nu}^{\ast} A_{\nu} - A}_{L^{p}(\Sig)} = 0
\end{eqnarray}
and
\begin{eqnarray} \label{ExpressionBoundsForGaugedSequenceCompactificationSecond}
	\sup_{\nu}\Norm{\hat{h}_{\nu}^{\ast} h^{\ast} h_{\nu}^{\ast} A_{\nu} - A}_{W^{1,p}(\Sig)} < \infty.
\end{eqnarray}
We now define $g_{\nu} \deq h_{\nu} h \hat{h}_{\nu}$. Then (\ref{EqualityCoulombGauge2CompactificationWithBoundedDerivatives}) proves (c) in (ii). Furthermore, by (\ref{ExpressionBoundsForGaugedSequenceCompactificationSecond}) the sequence $g^{\ast}_{\nu} A_{\nu}$ is uniformly bounded in $W^{1,p}$. Thus, by the Banach-Alaoglu theorem, the Sobolev embedding theorem and Rellich's theorem it follows that, after passing to a subsequence, $g^{\ast}_{\nu} A_{\nu}$ converges to some connection $A^{\prime}$ weakly in $W^{1,p}$ and strongly in $C^{0}$. By (\ref{ExpressionBoundsForGaugedSequenceCompactificationFirst}) we conclude that $A^{\prime}=A$. This proves (a) in (ii). It remains to consider the sequence of sections $g_{\nu}^{-1}u_{\nu}$. By (\ref{EstimateUniformL^inftyBoundOnTwistedDerivatives}) we have
\[
\sup_{\nu} \Norm{\dop_{g^{\ast}_{\nu} A_{\nu}}\!\bigl( g_{\nu}^{-1} u_{\nu} \bigr)}_{L^{p}(K)} = \sup_{\nu} \Norm{\dop_{A_{\nu}}\!u_{\nu}}_{L^{p}(K)} \le \sup_{\nu} \Norm{\dop_{A_{\nu}}\!u_{\nu}}_{L^{\infty}(K)} < \infty
\]
for every compact subset $K \subset \Sig \setminus Z$. Hence, by compactness of $\X$ it follows from formula (\ref{EquationTwistedDerivative}) that $g_{\nu}^{-1} u_{\nu}$ is uniformly bounded in $W^{1,p}$ on compact subsets of $\Sig \setminus Z$. Hence there exists a section $u\in W^{1,p}_{\mathrm{loc}}(\Sig \setminus Z,P(\X))$ such that, after passing to a subsequence, $g^{-1}_{\nu} u_{\nu}$ converges to~$u$ weakly in $W^{1,p}$ and strongly in $C^{0}$ on compact subsets of $\Sig \setminus Z$. This proves (b) in (ii) and completes the proof of Lemma \ref{LemmaWeakLimitCompactification}.
\end{proof}

For the rest of this section, let us fix $p>2$. To simplify notation, we abbreviate
\[
\Ahat_{\nu} \deq g^{\ast}_{\nu} A_{\nu} \quad \text{and} \quad \uhat_{\nu} \deq g_{\nu}^{-1} u_{\nu},
\]
where $g_{\nu} \in \G^{2,p}(P)$ are the gauge transformations from Lemma~\ref{LemmaWeakLimitCompactification}. We restate the assertion of Lemma~\ref{LemmaWeakLimitCompactification} using this notation: There exists a pair $(A,u)$ consisting of a connection $A\in\A^{1,p}(P)$ on $P$ and a section $u\in W^{1,p}_{\mathrm{loc}}(\Sig \setminus Z,P(\X))$ of the bundle $P(\X)$ that is defined on $\Sig \setminus Z$, and a smooth reference connection $A_{0}\in\A(P)$ such that the following holds.
\begin{description}[topsep=1ex, itemsep=0.5ex, itemindent=0cm, labelsep=1ex, leftmargin=0.5cm]
	\item[(C1)] \label{EnumerationCoulombGaugeForA} The connection $A$ is in Coulomb gauge relative to $A_{0}$ on $\Sig$, that is,
\[
\dop_{A_{0}}^{\ast}(A - A_{0}) = 0.
\]
	\item[(C2)] \label{EnumerationConvergenceFor(A,u)} The sequence $(\Ahat_{\nu},\uhat_{\nu})$ converges to $(A,u)$ in the following sense.
\begin{enumerate}[topsep=1ex, itemsep=0.3ex, itemindent=0cm, labelsep=1ex, leftmargin=1cm]
	\item[(a)] The sequence $\Ahat_{\nu}$ converges to $A$ weakly in $W^{1,p}$ and strongly in $C^{0}$ on $\Sig$;
	\item[(b)] the sequence $\uhat_{\nu}$ converges to $u$ weakly in $W^{1,p}$ and strongly in $C^{0}$ on compact subsets of $\Sig \setminus Z$;
	\item[(c)] every $\Ahat_{\nu}$ is in Coulomb gauge relative to $A$ on $\Sig$, that is,
\begin{eqnarray*}
	\dop_{A}^{\ast}\bigl( \Ahat_{\nu} - A \bigr) = 0.
\end{eqnarray*}
\end{enumerate}
	\item[(C3)] \label{EnumerationUniformBoundForEnergy} The Yang-Mills-Higgs energy of the sequence $(\Ahat_{\nu},\uhat_{\nu})$ satisfies a uniform bound
\[
\sup_{\nu} E(\Ahat_{\nu},\uhat_{\nu}) < \infty.
\]
\end{description}

\renewcommand{\labelenumi}{(\roman{enumi})}

\subsection{The limit equations}
\label{SubSectionTheLimitEquations}

We consider the vortex equations
\[
\delbar_{J,\Ahat_{\nu}}(\uhat_{\nu}) = 0, \quad F_{\Ahat_{\nu}} + \mu(\uhat_{\nu}) \dvol_{\Sig} = 0
\]
in the limit $\nu \to \infty$ in order to obtain equations for the limit pair $(A,u)$. Since $u_{\nu}$ only converges on compact subsets of $\Sig \setminus Z$, the limit equations will only be defined on $\Sig \setminus Z$.

\begin{lemma} \label{LemmaLimitVortexEquationsOnSigma}
	The pair $(A,u)$ is a solution of class $W^{1,p}_{\mathrm{loc}}$ of the vortex equations (\ref{EquationVortexEquations}) on the complement $\Sig \setminus Z$ of the singular points.
\end{lemma}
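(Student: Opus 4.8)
The plan is to pass to the limit $\nu\to\infty$ in the two vortex equations satisfied by $(A_{\nu},u_{\nu})$, working on an arbitrary compact subset $K\subset\Sig\setminus Z$ and in a fixed local trivialization of $P$ over a neighbourhood of $K$. Since the assertion is local and $Z$ is finite, establishing both equations in $L^{p}(K)$ for every such $K$ will prove the lemma. Throughout I would use the convergences recorded in (ii'): $A_{\nu}\to A$ weakly in $W^{1,p}$ and strongly in $C^{0}$ on all of $\Sig$, and $u_{\nu}\to u$ weakly in $W^{1,p}$ and strongly in $C^{0}$ on $K$. Note that both limits are continuous by the Sobolev embedding $W^{1,p}\hookrightarrow C^{0}$ in dimension two, and that uniform convergence on the finite-measure set $K$ implies $L^{p}(K)$-convergence.

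I would first handle the curvature equation $F_{A_{\nu}}+\mu(u_{\nu})\,\dvol_{\Sig}=0$. In the chosen trivialization the curvature decomposes into a part linear in the first derivatives of $A_{\nu}$ and a quadratic part $\tfrac{1}{2}[A_{\nu}\wedge A_{\nu}]$. Weak $W^{1,p}$-convergence gives weak $L^{p}(K)$-convergence of the linear part to the corresponding part of $F_{A}$, while strong $C^{0}$-convergence of $A_{\nu}$ makes the quadratic part converge uniformly, hence in $L^{p}(K)$; together these give $F_{A_{\nu}}\rightharpoonup F_{A}$ weakly in $L^{p}(K)$. On the right-hand side $\mu(u_{\nu})\to\mu(u)$ uniformly on $K$, hence in $L^{p}(K)$. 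By uniqueness of weak limits I then obtain $F_{A}+\mu(u)\,\dvol_{\Sig}=0$ in $L^{p}(K)$.

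The Cauchy--Riemann equation $\delbar_{J,A_{\nu}}(u_{\nu})=0$ is treated similarly. Writing $\dop_{A_{\nu}}u_{\nu}=\dop u_{\nu}+X_{A_{\nu}}(u_{\nu})$, weak $W^{1,p}$-convergence of $u_{\nu}$ gives $\dop u_{\nu}\rightharpoonup \dop u$ weakly in $L^{p}(K)$, while $C^{0}$-convergence of $A_{\nu}$ and $u_{\nu}$ together with the smooth dependence of the infinitesimal action on both arguments gives $X_{A_{\nu}}(u_{\nu})\to X_{A}(u)$ uniformly on $K$; hence $\dop_{A_{\nu}}u_{\nu}\rightharpoonup \dop_{A}u$ weakly in $L^{p}(K)$. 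The term $J(u_{\nu})\circ\dop_{A_{\nu}}u_{\nu}\circ j_{\Sig}$ is the one requiring care: here $J(u_{\nu})\to J(u)$ uniformly on $K$ multiplies a sequence converging only weakly in $L^{p}(K)$. I would dispose of this by the elementary observation that if $f_{\nu}\to f$ uniformly on $K$ and $g_{\nu}\rightharpoonup g$ weakly in $L^{p}(K)$, then $f_{\nu}g_{\nu}\rightharpoonup fg$ weakly in $L^{p}(K)$, which follows by splitting $\int_{K}f_{\nu}g_{\nu}\phi=\int_{K}(f_{\nu}-f)g_{\nu}\phi+\int_{K}f\,g_{\nu}\phi$ for a test function $\phi\in L^{p'}(K)$, bounding the first integral via $\norm{f_{\nu}-f}_{C^{0}(K)}$ and the boundedness of $\norm{g_{\nu}}_{L^{p}(K)}$, and using $f\phi\in L^{p'}(K)$ in the second. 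This yields $\delbar_{J,A_{\nu}}(u_{\nu})\rightharpoonup \delbar_{J,A}(u)$ weakly in $L^{p}(K)$, so the vanishing of each term forces $\delbar_{J,A}(u)=0$.

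The only genuine obstacle is the passage to the limit in the nonlinear terms---the quadratic term of the curvature and the factor $J(u_{\nu})$ multiplying the twisted derivative---where strong $C^{0}$-convergence must be combined with merely weak $L^{p}$-convergence. Both are controlled precisely by the strong $C^{0}$-bounds furnished by Lemma~\ref{LemmaWeakLimitCompactification}. Since $K$ was arbitrary, $(A,u)$ satisfies both equations in $L^{p}$ on every compact subset of $\Sig\setminus Z$, and as $(A,u)$ is already of class $W^{1,p}_{\mathrm{loc}}$ there, it is a $W^{1,p}_{\mathrm{loc}}$-solution of the vortex equations on $\Sig\setminus Z$.
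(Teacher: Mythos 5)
Your proposal is correct and follows essentially the same route as the paper: pass to weak $L^{p}$ limits in both vortex equations on compact subsets of $\Sig \setminus Z$, using the strong $C^{0}$-convergence from Lemma~\ref{LemmaWeakLimitCompactification} to handle the nonlinear terms (the quadratic part of the curvature and the factor $J(u_{\nu})$). The paper states the weak convergence of both sides more tersely, but the underlying mechanism — combining weak $W^{1,p}$-convergence with uniform convergence and invoking uniqueness of weak limits — is identical to what you spell out.
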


\begin{proof}
	By condition (C2,\,a--b) the sequences $\delbar_{J,\Ahat_{\nu}}(\uhat_{\nu})$ and $F_{\Ahat_{\nu}} + \mu(\uhat_{\nu}) \dvol_{\Sig}$ converge to $\delbar_{J,A}(u)$ and $F_{A} + \mu(u) \dvol_{\Sig}$, respectively, weakly in~$L^{p}$ on compact subsets of $\Sig \setminus Z$. Since $(\Ahat_{\nu},\uhat_{\nu})$ satisfies the vortex equations
\[
\delbar_{J,\Ahat_{\nu}}(\uhat_{\nu}) = 0, \quad F_{\Ahat_{\nu}} + \mu(\uhat_{\nu}) \dvol_{\Sig} = 0
\]
for every $\nu$, it follows that $(A,u)$ is a solution of class $W^{1,p}_{\mathrm{loc}}$ of the vortex equations (\ref{EquationVortexEquations}) on $\Sig \setminus Z$.
\end{proof}

\subsection{Removal of singularities}
\label{SubSectionRemovalOfSingularities}

We apply Theorem~\ref{TheoremRemovalOfSingularities} in order to obtain limit equations for the pair $(A,u)$ that hold on all of $\Sig$. We begin by verifying that $(A,u)$ satisfies the assumptions of Theorem~\ref{TheoremRemovalOfSingularities}.

\begin{lemma} \label{LemmaSmoothnessAndFiniteEnergy}
	The limit pair $(A,u)$ has the following properties.
\begin{enumerate}[itemsep=0.3ex, itemindent=0cm, labelsep=1ex, leftmargin=1cm]
	\item $(A,u)$ is smooth on $\Sig \setminus Z$ and, after passing to a subsequence, $(\Ahat_{\nu},\uhat_{\nu})$ converges to $(A,u)$ in $C^{\infty}$ on compact subsets of $\Sig \setminus Z$.
	\item $(A,u)$ has finite Yang-Mills-Higgs energy $E(A,u) < \infty$.
\end{enumerate}
\end{lemma}

\begin{proof}
	Proof of (i): The proof is by elliptic bootstrapping and is similar to the proofs of \cite[Thm.\,3.1 and Thm.\,3.2]{Cieliebak/The-symplectic-vortex-equations-and-invariants-of-Hamiltonian-group-actions}, so we will be very brief on this. We prove that $(A,u)$ is of class $W^{k,p}$ for all $k \ge 1$, on any compact subset~$K \subset \Sig \setminus Z$. Since $p>2$, smoothness of $(A,u)$ on $\Sig \setminus Z$ will then follow by the Sobolev embedding theorem.

For $k=1$ this is true since $(A,u)$ is of class $W^{1,p}$ on $K$. Suppose now that $(A,u)$ is of class $W^{k,p}$ on $K$ for some $k \ge 1$. By Lemma \ref{LemmaLimitVortexEquationsOnSigma} the pair $(A,u)$ solves the vortex equations (\ref{EquationVortexEquations}) on the subset $K$.

Let $A_{0}$ be the smooth reference connection from Lemma~\ref{LemmaWeakLimitCompactification}, and write $\al \deq A - A_{0}$. Combining the second vortex equation (\ref{EquationVortexEquations}) with the Coulomb gauge condition (C1) we obtain an elliptic system
\[
\dop_{A_{0}}\!\al = - F_{A_{0}} - \frac{1}{2}\,[\al \wedge \al] - \mu(u)\,\dvol_{\Sig}, \quad \dop_{A_{0}}^{\ast}\!\al = 0.
\]
Since $(A,u)$ is of class $W^{k,p}$ on $K$, it follows that the right-hand sides of these equations are of class $W^{k,p}$ as well. Hence, by elliptic regularity, we conclude that $\al$, whence $A$, is of class $W^{k+1,p}$ on $K$.

Consider a holomorphic coordinate chart $\C \supset D \to \Sig$. By Remark \ref{RemarkLocalFromGlobal}, locally on $D$ the first vortex equation (\ref{EquationVortexEquations}) may be written in the form
\begin{eqnarray*} \label{EquationLinearPDERewrittenLimitGeneralRegularityCompactification}
	\del_{s}\!u  + J(u) \del_{t}\!u = - X_{\Phi}(u) - J(u) X_{\Psi}(u),
\end{eqnarray*}
where $A = \Phi \dop\!s + \Psi \dop\!t$. Since $(A,u)$ is of class $W^{k,p}$ on $K$, the right-hand side of this equation is of class $W^{k,p}$ as well. Hence elliptic regularity implies that~$u$ is of class $W^{k+1,p}$ on $K$ (see \cite[App.\,B.4]{McDuff/J-holomorphic-curves-and-symplectic-topology}).

This proves the first part of (i). The proof of the second part is similar and will be omitted. Note that it relies on the Coulomb gauge condition (C2,\,c) together with the fact that $A$ is smooth on $\Sig \setminus Z$.

\smallskip

\noindent Proof of (ii): Let $K_{\mu} \subset \Sig \setminus Z$ be an exhausting sequence of compact subsets such that
\[
K_{\mu} \subset K_{\mu+1} \quad \text{and} \quad \bigcup_{\mu} K_{\mu} = \Sig \setminus Z.
\]
By (i) above it follows that the sequence of functions
\[
e(\Ahat_{\nu},\uhat_{\nu}) = \frac{1}{2} \Abs{\dop_{\Ahat_{\nu}}\!\uhat_{\nu}}^{2}_{J} + \Abs{\mu(\uhat_{\nu})}^{2}
\]
converges to
\[
e(A,u) = \frac{1}{2} \Abs{\dop_{A}\!u}^{2}_{J} + \Abs{\mu(u)}^{2}
\]
in $C^{\infty}$ on every compact set $K_{\mu}$, whence
\[
\int_{K_{\mu}} e(A,u) \dvol_{\Sig} = \lim_{\nu\to\infty} \int_{K_{\mu}} e(\Ahat_{\nu},\uhat_{\nu}) \dvol_{\Sig}
\]
for every $\mu$. Moreover, we have
\[
\lim_{\mu\to\infty} \int_{K_{\mu}} e(\Ahat_{\nu},\uhat_{\nu}) \dvol_{\Sig} = \int_{\Sig \setminus Z} e(\Ahat_{\nu},\uhat_{\nu}) \dvol_{\Sig} = E(\Ahat_{\nu},\uhat_{\nu})
\]
for every $\nu$. By Fatou's lemma we therefore obtain
\begin{eqnarray*}
	E(A,u) &=& \int_{\Sig \setminus Z} e(A,u) \dvol_{\Sig} \,\le\, \liminf_{\mu\to\infty} \int_{K_{\mu}} e(A,u) \dvol_{\Sig} \\
	&=& \liminf_{\mu\to\infty} \left( \lim_{\nu\to\infty} \int_{K_{\mu}} e(\Ahat_{\nu},\uhat_{\nu}) \dvol_{\Sig} \right) \\
	&\le& \sup_{\nu} \left( \lim_{\mu\to\infty} \int_{K_{\mu}} e(\Ahat_{\nu},\uhat_{\nu}) \dvol_{\Sig} \right) = \, \sup_{\nu} E(\Ahat_{\nu},\uhat_{\nu}).
\end{eqnarray*}
In the last inequality we used that the sequence $\int_{K_{\mu}} e(\Ahat_{\nu},\uhat_{\nu}) \dvol_{\Sig}$ is nondecreasing for fixed~$\nu$. Since $\sup_{\nu} E(\Ahat_{\nu},\uhat_{\nu}) < \infty$ by condition (C3), assertion (ii) follows. This finishes the proof of Lemma \ref{LemmaSmoothnessAndFiniteEnergy}.
\end{proof}

\pagebreak

We are now in a position to apply Theorem \ref{TheoremRemovalOfSingularities} to the limit pair $(A,u)$.

\begin{lemma} \label{LemmaLimitVortexEquationsOnSigma_0}
	The limit pair $(A,u)$ is a solution of class $W^{1,p}$ of the vortex equations (\ref{EquationVortexEquations}) on all of $\Sig$.
\end{lemma}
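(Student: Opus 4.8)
The plan is to deduce this lemma directly from the Removable Singularity Theorem~\ref{TheoremRemovalOfSingularities}, applied at each of the finitely many singular points $z_1,\ldots,z_N$ constituting $Z$. The two hypotheses of that theorem are precisely what the preceding lemmas supply: Lemma~\ref{LemmaWeakLimitCompactification} gives that $A$ is of class $W^{1,p}$ on all of $\Sig$, and Lemma~\ref{LemmaEnergyFiniteCompactification} gives finite energy. Before invoking the theorem, however, I first need to know that $(A,u)$ is genuinely smooth on the punctured neighborhoods of the $z_j$, since Theorem~\ref{TheoremRemovalOfSingularities} is stated for smooth vortices whereas Lemma~\ref{LemmaLimitVortexEquationsOnSigma} only provides a solution of class $W^{1,p}_{\loc}$ on $\Sig\setminus Z$. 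I would establish this by standard elliptic bootstrapping for the vortex equations: since $A$ is in Coulomb gauge relative to the smooth connection $A_0$, the vortex equations together with the Coulomb condition form an elliptic system, and weak $W^{1,p}$ solutions are smooth on the open set $\Sig\setminus Z$ on which $u$ is defined.

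With smoothness in hand, I would fix a singular point $z_j$ and choose a holomorphic chart $\map{\vphi_j}{B}{\Sig}$ trivializing $P$, with $\vphi_j(0)=z_j$ and $\vphi_j(B)\cap Z=\{z_j\}$, which is possible because $Z$ is finite. By Remark~\ref{RemarkLocalFromGlobal} the vortex $(A,u)$ is then represented on $B\setminus\{0\}$ by a smooth local triple $(\Phi,\Psi,u^{\loc})$ solving the local vortex equations~(\ref{EquationLocalVortexEquations}). To verify hypothesis (i), I note that $A$ is continuous on $\Sig$ by the Sobolev embedding $W^{1,p}\hookrightarrow C^0$ (valid since $p>2$ on the surface $\Sig$), so the local components $\Phi$ and $\Psi$ extend continuously over $0$. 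To verify hypothesis (ii), I use Formula~(\ref{EquationEnergiesLocalVsGlobal}) together with Lemma~\ref{LemmaEnergyFiniteCompactification} to conclude that $E(\Phi,\Psi,u^{\loc};B)=E(A,u;\vphi_j(B))\le E(A,u)<\infty$.

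Theorem~\ref{TheoremRemovalOfSingularities} then yields that $u^{\loc}$, and hence $u$ itself in a neighborhood of $z_j$, is of class $W^{1,p}$ over the puncture. Carrying this out at each of $z_1,\ldots,z_N$ and combining with $u\in W^{1,p}_{\loc}(\Sig\setminus Z)$ gives $u\in W^{1,p}(\Sig)$. Finally, since the vortex equations hold on $\Sig\setminus Z$ by Lemma~\ref{LemmaLimitVortexEquationsOnSigma}, and since both $\delbar_{J,A}(u)$ and $F_A+\mu(u)\dvol_\Sig$ are now $L^p$-sections on all of $\Sig$ which vanish off the measure-zero set $Z$, the equations $\delbar_{J,A}(u)=0$ and $F_A+\mu(u)\dvol_\Sig=0$ hold as $L^p$-identities on all of $\Sig$. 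The step I expect to require the most care is the interior regularity upgrade from $W^{1,p}_{\loc}$ to smooth on $\Sig\setminus Z$, as this is what licenses the application of the removable singularity theorem in its stated form; once that is settled, the remainder is a routine verification of the theorem's two hypotheses together with the measure-zero extension of the equations across $Z$.
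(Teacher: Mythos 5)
Your proposal is correct and follows essentially the same route as the paper: identify the vortex with a local triple $(\Phi,\Psi,u^{\loc})$ in a holomorphic chart around each singular point, verify hypothesis (i) of Theorem~\ref{TheoremRemovalOfSingularities} from the $W^{1,p}$-regularity of $A$ (the paper cites Rellich's theorem where you use the Sobolev embedding) and hypothesis (ii) from Lemma~\ref{LemmaEnergyFiniteCompactification}, and conclude via finiteness of $Z$. Your preliminary interior bootstrapping step, upgrading $(A,u)$ from $W^{1,p}_{\loc}$ to smooth on $\Sig\setminus Z$ so that Theorem~\ref{TheoremRemovalOfSingularities} applies in its stated form, is a legitimate refinement of a point the paper passes over silently (its smoothness argument, Lemma~\ref{LemmaEllipticRegularity}, only appears afterwards), and your explicit measure-zero extension of the equations across $Z$ matches what the paper leaves implicit.
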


\begin{proof}
	We apply Theorem \ref{TheoremRemovalOfSingularities} to each of the finitely many singular points in~$Z$. Let $z_{j}\in Z$ and choose a holomorphic chart $\C \supset B \to \Sig$ such that the origin is mapped to $z_{j}$. By Remark \ref{RemarkLocalFromGlobal}, locally in this chart the vortex $(A,u)$ gets identified with a triple $(\Phi,\Psi,u)$ satisfying the vortex equations
\[
\begin{split}
	\del_{s}\!u + X_{\Phi}(u) + J \bigl( \del_{t}\!u + X_{\Psi}(u) \bigr) &= 0, \\
	\del_{s}\!\Psi - \del_{t}\!\Phi + [\Phi,\Psi] + \lam^{2} \cdot \mu(u) &= 0
\end{split}
\]
on the punctured disk $B \setminus \{0\}$, where $\map{\Phi,\Psi}{B}{\gfr}$ are defined by $A = \Phi \dop\!s + \Psi \dop\!t$ and $\map{\lam}{B}{(0,\infty)}$ is defined by $\dvol_{\Sig} = \lam^{2}\,\dop\!s \wedge \dop\!t$. Since~$A$ is of class $W^{1,p}$ on $\Sig$ and $p>2$, it follows by the Sobolev embedding theorem that $\Phi$ and $\Psi$ are continuous on all of $B$. Moreover, by Lemma~\ref{LemmaSmoothnessAndFiniteEnergy}\,(i), $A$ is smooth on $\Sig \setminus Z$, whence $\Phi$ and $\Psi$ are smooth on $B \setminus \{0\}$. Lastly, $E(\Phi,\Psi,u;B) < \infty$ by Remark \ref{RemarkLocalFromGlobal} and Lemma~\ref{LemmaSmoothnessAndFiniteEnergy}\,(ii). Hence Theorem~\ref{TheoremRemovalOfSingularities} implies that the map~$u$ is of class $W^{1,p}$ on all of $B$. The lemma now follows from Lemma~\ref{LemmaLimitVortexEquationsOnSigma}.
\end{proof}

We close with two results concerning the regularity of the limit pair $(A,u)$ and of the pairs $(\Ahat_{\nu},\uhat_{\nu})$.

\begin{lemma} \label{LemmaEllipticRegularity}
\begin{enumerate}[itemsep=0.3ex, itemindent=0cm, labelsep=1ex, leftmargin=1cm]
	\item The limit pair $(A,u)$ is smooth on $\Sig$.
	\item For every $\nu$, the pair $(\Ahat_{\nu},\uhat_{\nu})$ is smooth on $\Sig$.
\end{enumerate}
\end{lemma}

\begin{proof}
	Proof of (i): By Lemma \ref{LemmaLimitVortexEquationsOnSigma_0}, the pair $(A,u)$ is a $W^{1,p}$-solution of the vortex equations~(\ref{EquationVortexEquations}) on all of $\Sig$. Moreover, by (C1) the connection $A$ is in Coulomb gauge relative to the smooth connection $A_{0}$. Hence assertion (i) follows by elliptic bootstrapping as in the proof of Lemma~\ref{LemmaSmoothnessAndFiniteEnergy}\,(i) above (see also the proof of \cite[Thm.\,3.1]{Cieliebak/The-symplectic-vortex-equations-and-invariants-of-Hamiltonian-group-actions}).

\smallskip

\noindent Proof of (ii): By gauge invariance of the vortex equations, for every $\nu$ the pair $(\Ahat_{\nu},\uhat_{\nu})$ is a $W^{1,p}$-solution of the vortex equations (\ref{EquationVortexEquations}) on $\Sig$. Moreover, by (C2,\,c) the connection $\Ahat_{\nu}$ is in Coulomb gauge relative to the connection~$A$. Since $A$ is smooth on $\Sig$ by part (i) above, assertion~(ii) now follows by elliptic bootstrapping as in (i).
\end{proof}

\subsection{Proof of Theorem \ref{TheoremConvergenceModuloBubbling}}
\label{SubSectionProofOfTheoremConvergenceModuloBubbling}

Let $Z = \{z_{1},\ldots,z_{N}\}$ be as in Lemma \ref{LemmaSingularPoints}. Fix $p>2$, and let the pair $(A,u)$ and the sequence of gauge transformations $g_{\nu} \in \G^{2,p}(P)$ be as in Lemma \ref{LemmaWeakLimitCompactification}. We will see below that the gauge transformations $g_{\nu}$ are actually smooth.

By Lemma~\ref{LemmaLimitVortexEquationsOnSigma_0} and Lemma \ref{LemmaEllipticRegularity}\,(i), the pair $(A,u)$ is a smooth vortex.

Recall that we abbreviated
\[
\Ahat_{\nu} \deq g^{\ast}_{\nu} A_{\nu} \quad \text{and} \quad \uhat_{\nu} \deq g_{\nu}^{-1} u_{\nu}.
\]
Assertion (i) of Theorem \ref{TheoremConvergenceModuloBubbling} then holds by (C2,\,a), while assertion (ii) follows from Lemma~\ref{LemmaSmoothnessAndFiniteEnergy}\,(i). Moreover, by Lemma \ref{LemmaEllipticRegularity}\,(ii), for every $\nu$ the connection
\[
g_{\nu}^{\ast} A_{\nu} = g_{\nu}^{-1} A_{\nu} g_{\nu} + g_{\nu}^{-1} \dop\!g_{\nu}
\]
is smooth. Since $A_{\nu}$ is smooth by assumption, a standard bootstrapping argument shows that the gauge transformations $g_{\nu}$ are actually smooth for every $\nu$.

It remains to prove assertions (iii) and (iv). Following the proof of \cite[Thm.\,4.6.1]{McDuff/J-holomorphic-curves-and-symplectic-topology}, we fix numbers $\veps_{j}>0$ for $j=1,\ldots,N$ such that the disks $B_{\veps_{j}}(z_{j})$ are pairwise disjoint. Then, after passing to a subsequence, the limits
\[
m_{\veps_{j}}(z_{j}) \deq \lim_{\nu\to\infty} E\bigl( A_{\nu},u_{\nu};B_{\veps_{j}}(z_{j}) \bigr)
\]
exist, and the function $\veps \mapsto m_{\veps}(z_{j})$ is continuous for $0<\veps \le \veps_{j}$. By Lemma~\ref{LemmaLowerEnergyBoundForBubbles},
\[
\liminf_{\nu\to\infty} E\bigl( A_{\nu},u_{\nu};B_{\veps_{j}}(z_{j}) \bigr) \ge \hbar,
\]
whence
\[
m(z_{j}) \deq \lim_{\veps\to 0} m_{\veps}(z_{j}) \ge \hbar.
\]
This proves (iii). To prove (iv), fix a number $\veps\le\min_{j}\veps_{j}$ and note that
\[
\begin{split}
	E\!\left( A,u;K \setminus \bigcup_{j=1}^{N} B_{\veps}(z_{j}) \right) &= \lim_{\nu\to\infty} E\bigl( A_{\nu},u_{\nu};K \bigr) - \sum_{j=1}^{N} \lim_{\nu\to\infty} E\bigl( A_{\nu},u_{\nu};B_{\veps}(z_{j}) \bigr) \\
	&= \lim_{\nu\to\infty} E\bigl( A_{\nu},u_{\nu};K \bigr) - \sum_{j=1}^{N} m_{\veps}(z_{j}).
\end{split}
\]
Taking the limit $\veps\to 0$, we get
\[
E(A,u;K) = \lim_{\nu\to\infty} E\bigl( A_{\nu},u_{\nu};K \bigr) - \sum_{j=1}^{N} m(z_{j}).
\]
This proves (iv) and completes the proof of Theorem~\ref{TheoremConvergenceModuloBubbling}.

\section{Gromov compactness}
\label{SectionGromovCompactness}

The aim of this section is to prove Theorem~\ref{TheoremGromovCompactness}. Following the approach of Mundet i Riera \cite{Mundet-i-Riera/Hamiltonian-Gromov-Witten-invariants}, our strategy will be to reduce the compactification problem for vortices to Gromov compactness for pseudoholomorphic curves. To this end, we shall apply Gromov's graph construction in order to transform vortices into pseudoholomorphic sections of the associated symplectic fiber bundle $P(\X) = P \xop_{G} \X$ over $\Sig$. We then deduce a mean value inequality for these sections from the a priori estimate for vortices proved in Section~\ref{SectionAPrioriEstimate}. All this will be explained in Section~\ref{SubSectionVorticesVersusPseudoholomorphicMaps}. In Section \ref{SubSectionBubblesConnectRevisited}, we generalize the bubbling analysis from McDuff and Salamon \cite[Sec.\,4.7]{McDuff/J-holomorphic-curves-and-symplectic-topology} in such a way that it also applies to pseudoholomorphic sections of $P(\X)$ induced by vortices. The actual proof of Theorem~\ref{TheoremGromovCompactness} is then given in Section~\ref{SubSectionProofOfGromovCompactness}, where we assemble the results previously obtained in Sections \ref{SectionConvergenceModuloBubbling} and \ref{SectionGromovCompactness}. We keep the notation introduced in Section~\ref{SectionIntroductionAndMainResults}.

\subsection{Vortices vs.\,pseudoholomorphic curves}
\label{SubSectionVorticesVersusPseudoholomorphicMaps}

We explain how vortices naturally occur as pseudoholomorphic sections of the bundle $P(\X) \to \Sig$, and prove a mean value inequality for such sections. This may be regarded as a global version of the graph construction from Section~\ref{SectionRemovalOfSingularities}. The main results are collected in Lemma \ref{LemmaVorticesAsPseudoholomorphicMaps} and Proposition \ref{PropositionVorticesAsPseudoholomorphicMaps} below.

\smallskip

Let us begin by explaining how the total space $P(\X) = P \xop_{G} \X$ naturally inherits the structure of an almost complex symplectic manifold. Fix an arbitrary smooth connection~$A$ on the $G$-bundle $P \to \Sig$. It is a well-known fact (see \cite{Mundet-i-Riera/Hamiltonian-Gromov-Witten-invariants, Cieliebak/J-holomorphic-curves-moment-maps-and-invariants-of-Hamiltonian-group-actions, Gaio/Gromov-Witten-invariants-of-symplectic-quotients-and-adiabatic-limits, Guillemin/Supersymmetry-and-equivariant-de-Rham-theory}) that $A$, together with the symplectic form $\om$ on $\X$, the almost complex structure $J$ on $\X$, and the complex structure~$j_{\Sig}$ on~$\Sig$, gives rise to a symplectic form and an almost complex structure on $P(\X)$. For later reference, we briefly review these constructions.

First, we define a symplectic form $\om_{A}$ on $P(\X)$. Let us denote by $\map{p_{1}}{P \xop \X}{P}$ and $\map{p_{2}}{P \xop \X}{\X}$ the canonical projections, and consider the 2-form
\[
\sigtilde_{A} \deq \om - \dop \l A,\mu \r = p_{2}^{\ast}\,\om - \dop \bigl\l p_{1}^{\ast}A,\mu \circ p_{2} \bigr\r_{\gfr}
\]
on $P \xop \X$. It descends to a closed 2-form $\sig_{A}$ on $P(\X)$, called the coupling form (see \cite{Guillemin/Supersymmetry-and-equivariant-de-Rham-theory}). Note that $\sig_{A}$ may be degenerate in the horizontal direction. We make it into a symplectic form by adding on a sufficiently large multiple of the pull-back of the area form $\dvol_{\Sig}$ along the bundle projection $\map{p}{P(\X)}{\Sig}$. This leads us to define the symplectic form~$\om_{A}$ by
\begin{eqnarray} \label{EquationSymplecticFormOnP(X)}
	\om_{A} \deq (1 + c_{A,\mu}) \cdot p^{\ast}\!\dvol_{\Sig} + \, \sig_{A},
\end{eqnarray}
where $c_{A,\mu} > 0$ is a sufficiently large constant. It will later be convenient to choose $c_{A,\mu}$ in such a way that
\begin{eqnarray} \label{EstimateForTwoFormOnP(X)}
	\Abs{\l F_{A}(v_{1},v_{2}),\mu \r_{\gfr}} \le c_{A,\mu} \cdot \Abs{\dop\!\pi(v_{1})} \cdot \Abs{\dop\!\pi(v_{2})}
\end{eqnarray}
for all $v_{1},v_{2} \in TP$, where $\map{\pi}{P}{\Sig}$ denotes the bundle projection. Note that such a constant~$c_{A,\mu}$ exists since $F_{A}$ is horizontal and $\X$ is compact.

Second, we define an almost complex structure $J_{A}$ on $P(\X)$. For that purpose, we consider the splitting of the tangent bundle $TP(\X)$ induced by the connection $A$. More precisely, recall that we denote the points of $P(\X)$ by $[p,x]$, where $p\in P$ and $x\in\X$. The tangent space $T_{[p,x]}P(\X)$ is given by
\[
T_{[p,x]}P(\X) = \bigl( T_{p}P \xop T_{x}\X \bigr) \big/ \bigl\{ (p.\xi,-X_{\xi}(x)) \,|\, \xi\in\gfr \bigr\},
\]
where $p.\xi$ and $X_{\xi}(x)$ denote the infinitesimal action of $\xi \in \gfr$ on $P$ at $p$ and on $\X$ at~$x$, respectively. Its elements will be denoted by $[v,w]$, where $v\in T_{p}P$ and $w\in T_{x}\X$. The connection~$A$ then gives rise to a splitting $TP \cong TP^{\,\hor} \oplus TP^{\,\ver}$ into horizontal and vertical subbundles, denoted by $v = v^{\hor} + p.A_{p}(v)$ for $v \in T_{p}P$. It further induces a splitting $TP(\X) \cong TP(\X)^{\hor} \oplus TP(\X)^{\ver}$, and any tangent vector $[v,w] \in T_{[p,x]}P(\X)$ may then be written as
\begin{eqnarray} \label{EquationHorizontalDecompositionInTP(X)}
	[v,w] = \bigl[ v^{\hor}, w + X_{A_{p}(v)}(x) \bigr].
\end{eqnarray}
The almost complex structure $J_{A}$ is now defined in terms of the complex structure $j_{\Sig}$ on $\Sig$ and the almost complex structure $J$ on $\X$ by the formula
\begin{eqnarray} \label{EquationAlmostComplexStructureOnP(X)}
	J_{A} [v,w] \deq \Bigl[ (\pi^{\ast}j_{\Sig})_{p} \, v^{\hor},J \bigl( w + X_{A_{p}(v)}(x) \bigr) \Bigr],
\end{eqnarray}
where we denote by $\pi^{\ast}j_{\Sig}$ the $G$-equivariant lift of $j_{\Sig}$ to $TP^{\hor}$. A straightforward computation shows that $J_{A}$ satisfies $J_{A}^{2}[v,w] = -[v,w]$.

\medskip

We are now in a position to state the main results of this subsection. The key observation is the next lemma, see \cite{Mundet-i-Riera/Hamiltonian-Gromov-Witten-invariants, Cieliebak/J-holomorphic-curves-moment-maps-and-invariants-of-Hamiltonian-group-actions}. It explains how vortices give rise to pseudoholomorphic sections of the bundle $P(\X)$.

\pagebreak

\begin{lemma} \label{LemmaVorticesAsPseudoholomorphicMaps}
	Fix a smooth connection $A$ on $P$, with corresponding almost complex structure~$J_{A}$ on $P(\X)$ defined by formula (\ref{EquationAlmostComplexStructureOnP(X)}). Let $\map{u}{P}{\X}$ be a smooth $G$-equivariant map, and denote by $\map{\utilde}{\Sig}{P(\X)}$ the corresponding section as in Remark \ref{RemarkMapVersusSection}. Then $u$ satisfies the first vortex equation
\[
\delbar_{J,A}(u) = \frac{1}{2} \, \bigl( \dop_{A}\!u + J(u) \circ \dop_{A}\!u \circ j_{\Sig} \bigr) = 0
\]
if and only if $\utilde$ is $(j_{\Sig},J_{A})$-holomorphic, that is,
\[
\delbar_{J_{A}}(\utilde) = \frac{1}{2} \bigl( \dop\!\utilde + J_{A}(\utilde) \circ \dop\!\utilde \circ j_{\Sig} \bigr) = 0.
\]
\end{lemma}

\begin{proof}
	Recall from Remark \ref{RemarkMapVersusSection} the definition of the section $\map{\utilde}{\Sig}{P(\X)}$. Let $z\in \Sig$ and $v\in T_{z}\Sig$. By formula (\ref{EquationTwistedDerivative}) we have
\begin{eqnarray} \label{EquationDerivativeOfutilde}
	\dop\!\utilde (v) = \bigl[ \vtilde,\dop\!u (\vtilde) \bigr] = \bigl[ \vtilde,\dop_{A}\!u (\vtilde) \bigr],
\end{eqnarray}
where $\vtilde\in T_{p}P$ denotes the $A$-horizontal lift of $v$, for some $p\in P$ such that $\pi(p)=z$. Then we have
\[
\begin{split}
	\delbar_{J_{A}}(\utilde)(v) &= \frac{1}{2} \Big( \dop\!\utilde(v) + \bigl( J_{A}(\utilde) \circ \dop\!\utilde \circ j_{\Sig} \bigr)(v) \Big) \\
	&= \frac{1}{2} \Big( \bigl[ \vtilde,\dop_{A}\!u(\vtilde) \bigr] + J_{A}(\utilde) \bigl[ (\pi^{\ast}j_{\Sig}) \vtilde,\dop_{A}\!u \bigl( (\pi^{\ast}j_{\Sig}) \vtilde \bigr) \bigr] \Big) \\
	&= \frac{1}{2} \Big( \bigl[ \vtilde,\dop_{A}\!u(\vtilde) \bigr] + \bigl[ (\pi^{\ast}j_{\Sig})^{2} \vtilde,\bigl( J(u) \circ \dop_{A}\!u  \circ \pi^{\ast}j_{\Sig} \bigr) (\vtilde) \bigr] \Big) \\
	&= \frac{1}{2} \Big( \bigl[ \vtilde,\dop_{A}\!u(\vtilde) \bigr] + \bigl[ -\vtilde,\bigl( J(u) \circ \dop_{A}\!u  \circ \pi^{\ast}j_{\Sig} \bigr) (\vtilde) \bigr] \Big) \\
	&= \left[ 0, \frac{1}{2} \Big( \dop_{A}\!u(\vtilde)  + \bigl( J(u) \circ \dop_{A}\!u  \circ \pi^{\ast}j_{\Sig} \bigr) (\vtilde) \Big) \right] = \bigl[ 0,\delbar_{J,A}(u)(\vtilde) \bigr],
\end{split}
\]
and the lemma follows.
\end{proof}

The next proposition provides a mean value inequality for the pseudoholomorphic sections that are associated to vortices as in the previous lemma.

\begin{proposition} \label{PropositionVorticesAsPseudoholomorphicMaps}
	Fix a smooth reference connection $A_{0}$ on $P$, with corresponding symplectic form $\om_{A_{0}}$ and almost complex structure~$J_{A_{0}}$ on $P(\X)$ defined by formulas (\ref{EquationSymplecticFormOnP(X)}) and (\ref{EquationAlmostComplexStructureOnP(X)}), respectively. Then there exist constants $c, c^{\prime} > 0$, $r_{0} > 0$, and $\de, C > 0$ such that for all connections $A$ satisfying
\[
\Norm{A-A_{0}}_{C^{0}(\Sig)} \le c
\]
the following holds.
\begin{enumerate}[topsep=1ex, itemsep=0.3ex, itemindent=0cm, labelsep=1ex, leftmargin=1cm]
	\item The almost complex structure $J_{A}$ on $P(\X)$ defined by formula (\ref{EquationAlmostComplexStructureOnP(X)}) is tamed by the symplectic form $\om_{A_{0}}$.
\end{enumerate}
Let moreover $\map{u}{P}{\X}$ be a smooth $G$-equivariant map. Suppose that $(A,u)$ is a vortex, and denote by $\map{\utilde}{\Sig}{P(\X)}$ the $J_{A}$-holomorphic section of $P(\X)$ induced by $u$ as in Lemma \ref{LemmaVorticesAsPseudoholomorphicMaps}. Denote by
\[
\l\cdot\,,\cdot\r_{J_{A}} \deq \frac{1}{2} \bigl( \om_{A_{0}}(\cdot\,,J_{A}\,\cdot) - \om_{A_{0}}(J_{A}\,\cdot,\,\cdot) \bigr)
\]
the Riemannian metric on $P(\X)$ determined by $\om_{A_{0}}$ and $J_{A}$, which is well-defined by (i) above. Recall from \cite[Sec.\,2.2]{McDuff/J-holomorphic-curves-and-symplectic-topology} that the energy of $\utilde$ is given by
\[
E_{J_{A}}(\utilde) \deq \frac{1}{2} \int_{\Sig} \Abs{\dop\!\utilde}_{J_{A}}^{2} \, \dvol_{\Sig},
\]
where the norm $\Abs{\dop\!\utilde}_{J_{A}}$ is understood with respect to the metric $\l\cdot\,,\cdot\r_{J_{A}}$ on $P(\X)$ and the metric~$\l\cdot\,,\cdot\r_{\Sig}$ on $\Sig$.
\begin{enumerate}[topsep=1ex, itemsep=0.3ex, itemindent=0cm, labelsep=1ex, leftmargin=1cm]
	\item[(ii)] The energy of the section $\utilde$ and the Yang-Mills-Higgs energy of the vortex $(A,u)$ are related by
\[
E_{J_{A}}(\utilde) \le c^{\prime} \cdot \bigl( E(A,u) + \volume(\Sig) \bigr),
\]
where $\volume(\Sig)$ denotes the area of $\Sig$ with respect to $\dvol_{\Sig}$.
	\item[(iii)] For all $z_{0} \in \Sig$ and all $0 < r < r_{0}$, the section $\utilde$ satisfies a mean value inequality
\[
\hspace{1cm} E_{J_{A}} \bigl( \utilde;B_{r}(z_{0}) \bigr) < \de \quad \Longrightarrow \quad \Abs{\dop\!\utilde(z_{0})}_{J_{A}}^{2} \le \frac{C}{r^{2}} \cdot E_{J_{A}}\bigl( \utilde;B_{r}(z_{0}) \bigr) + C.
\]
\end{enumerate}
\end{proposition}

\bigskip

The proof of Proposition~\ref{PropositionVorticesAsPseudoholomorphicMaps} will occupy the remainder of this subsection.

\smallskip

Let us fix a smooth reference connection $A_{0}$ on $P$. It gives rise to a symplectic form $\om_{A_{0}}$ on the total space $P(\X)$ by formula (\ref{EquationSymplecticFormOnP(X)}). Let $A$ be a smooth connection on~$P$.

\medskip

\noindent{\textbf{Proof of (i):}}
	Let $[v,w]\in TP(\X)$ such that $[v,w] \neq [0,0]$. By formula~(\ref{EquationHorizontalDecompositionInTP(X)}) we may without loss of generality assume that $v$ is $A$-horizontal. Then formula (\ref{EquationAlmostComplexStructureOnP(X)}) becomes
\[
J_{A} [v,w] = \bigl[ (\pi^{\ast}j_{\Sig}) v,Jw \bigr].
\]
Combining this with formula (\ref{EquationSymplecticFormOnP(X)}) we obtain
\begin{multline} \label{EquationFirstCalculationTaming}
	\om_{A_{0}}\bigl( [v,w],J_{A} [v,w] \bigr) = \om\bigl( w + X_{A_{0}(v)},Jw + X_{A_{0}((\pi^{\ast}j_{\Sig}) v)} \bigr) \\ - \, \bigl\l F_{A_{0}}\bigl( v,(\pi^{\ast}j_{\Sig}) v \bigr),\mu \bigr\r + \, (1 + c_{A_{0},\mu}) \cdot \dvol_{\Sig}\bigl( \dop\!\pi(v),j_{\Sig} \dop\!\pi(v) \bigr).
\end{multline}
In order to estimate the first term on the right-hand side, we write it as
\begin{multline*}
	\om\bigl( w + X_{A_{0}(v)},Jw + X_{A_{0}((\pi^{\ast}j_{\Sig}) v)} \bigr) = \om\bigl( w,Jw \bigr) + \om\bigl( X_{(A - A_{0})(\dop\!\pi(v))},J w \bigr) \\ + \om\bigl( w,X_{(A-A_{0})(j_{\Sig} \dop\!\pi(v))} \bigr) + \om\bigl( X_{(A - A_{0})(\dop\!\pi(v))},X_{(A-A_{0})(j_{\Sig} \dop\!\pi(v))} \bigr).
\end{multline*}
Here we used that $A-A_{0}$ is horizontal and hence descends to $\Sig$, and that $A(v)=0$ since $v$ is $A$-horizontal by assumption. Applying the inequalities of Cauchy-Schwarz and Young it follows that there exists a constant $c_{1}>0$, not depending on $A$, such that
\[
\Abs{\om\bigl( w + X_{A_{0}(v)},Jw + X_{A_{0}((\pi^{\ast}j_{\Sig}) v)} \bigr)} \ge \frac{1}{2} \Abs{w}_{J}^{2} - c_{1} \cdot \Norm{A-A_{0}}_{C^{0}(\Sig)}^{2} \cdot \abs{\dop\!\pi(v)}^{2}.
\]
Furthermore, by inequality~(\ref{EstimateForTwoFormOnP(X)}) the last two terms on the right-hand side of (\ref{EquationFirstCalculationTaming}) may be estimated by
\[
- \bigl\l F_{A_{0}}\bigl( v,(\pi^{\ast}j_{\Sig}) v \bigr),\mu \bigr\r + (1 + c_{A_{0},\mu}) \cdot \dvol_{\Sig}\bigl( \dop\!\pi(v),j_{\Sig} \dop\!\pi(v) \bigr) \\ \ge \Abs{\dop\!\pi(v)}^{2}.
\]
Hence we conclude that
\[
\om_{A_{0}}\bigl( [v,w],J_{A} [v,w] \bigr) \ge \frac{1}{2} \, \abs{w}_{J}^{2} + \left( 1 - c_{1} \cdot \Norm{A-A_{0}}_{C^{0}(\Sig)}^{2} \right) \cdot \Abs{\dop\!\pi(v)}^{2} > 0
\]
whenever $\norm{A-A_{0}}_{C^{0}(\Sig)}$ is sufficiently small. This proves (i).

\bigskip

Let now $\map{u}{P}{\X}$ be a smooth $G$-equivariant map such that $(A,u)$ is a vortex. Denote by $\map{\utilde}{\Sig}{P(\X)}$ the $J_{A}$-holomorphic section of $P(\X)$ induced by $u$ as in Lemma \ref{LemmaVorticesAsPseudoholomorphicMaps}. Write
\[
\l\cdot\,,\cdot\r_{J_{A}} \deq \frac{1}{2} \bigl( \om_{A_{0}}(\cdot\,,J_{A}\,\cdot) - \om_{A_{0}}(J_{A}\,\cdot,\,\cdot) \bigr)
\]
for the Riemannian metric on $P(\X)$ determined by $\om_{A_{0}}$ and $J_{A}$, which is well-defined by (i) above. Recall further that the energy of $\utilde$ is given by
\[
E_{J_{A}}(\utilde) \deq \frac{1}{2} \int_{\Sig} \Abs{\dop\!\utilde}_{J_{A}}^{2} \, \dvol_{\Sig}.
\]
Before we turn to the proof of assertions (ii) and (iii) of Proposition~\ref{PropositionVorticesAsPseudoholomorphicMaps}, we prove the following technical lemma.

\begin{lemma} \label{LemmaComparisonOfMetricsOnP(X)}
	There exists a constant $C_{A_{0}} > 0$, not depending on $(A,u)$, such that the following holds. Whenever $\norm{A-A_{0}}_{C^{0}(\Sig)}$ is sufficiently small, we have
\[
\frac{1}{2} \Bigl( 2 + \Abs{\dop_{A}\!u}^{2}_{J} \Bigr) \le \Abs{\dop\!\utilde}^{2}_{J_{A}} \le C_{A_{0}} \cdot \Bigl( 2 + \Abs{\dop_{A}\!u}^{2}_{J} \Bigr).
\]
\end{lemma}

\begin{proof}
	Let $[v,w]\in TP(\X)$, and note that
\[
\Abs{[v,w]}_{J_{A}}^{2} = \om_{A_{0}}\bigl( [v,w],J_{A} [v,w] \bigr).
\]
The computation in the proof of part (i) of Proposition \ref{PropositionVorticesAsPseudoholomorphicMaps} above hence shows that
\[
\Abs{[v,w]}_{J_{A}}^{2} \ge \frac{1}{2} \Abs{w}_{J}^{2} + \Bigl( 1 - c_{1} \cdot \Norm{A-A_{0}}_{C^{0}(\Sig)}^{2} \Bigr) \cdot \Abs{\dop\!\pi(v)}^{2}
\]
for some constant $c_{1}>0$, not depending on $A$. A similar computation yields
\[
\Abs{[v,w]}_{J_{A}}^{2} \le 2 \Abs{w}_{J}^{2} + c_{2} \cdot \Bigl( 1 + \Norm{A-A_{0}}_{C^{0}(\Sig)}^{2} \Bigr) \cdot \Abs{\dop\!\pi(v)}^{2}
\]
for some constant $c_{2}>0$, not depending on $A$. Hence there exists a constant $C_{A_{0}}>0$, not depending on $A$, such that
\[
\frac{1}{2} \, \Bigl( \Abs{\dop\!\pi(v)}^{2} + \Abs{w}^{2}_{J} \Bigr) \le \Abs{[v,w]}_{J_{A}}^{2} \le C_{A_{0}} \cdot \Bigl( \Abs{\dop\!\pi(v)}^{2} + \Abs{w}^{2}_{J} \Bigr)
\]
whenever $\norm{A-A_{0}}_{C^{0}(\Sig)}$ is sufficiently small. By formula (\ref{EquationDerivativeOfutilde}), the claimed inequality follows.
\end{proof}

\bigskip

\noindent{\textbf{Proof of (ii):}}
	Using Lemma \ref{LemmaComparisonOfMetricsOnP(X)} and formula (\ref{EquationYMHEnergyGlobal}), we obtain
\begin{eqnarray*}
	E_{J_{A}}(\utilde) &=& \frac{1}{2} \, \int_{\Sig} \Abs{\dop\!\utilde}_{J_{A}}^{2} \dvol_{\Sig} \,\le\, \frac{C_{A_{0}}}{2} \cdot \int_{\Sig} \Bigl( 2 + \Abs{\dop_{A}\!u}^{2}_{J} \Bigr) \dvol_{\Sig} \\
	&=& C_{A_{0}} \cdot \int_{\Sig} \left( \frac{1}{2} \Abs{\dop_{A}\!u}^{2}_{J} + \Abs{\mu(u)}^{2} \right) \dvol_{\Sig} \,+\, C_{A_{0}} \cdot \volume(\Sig) - C_{A_{0}} \cdot \int_{\Sig} \Abs{\mu(u)}^{2} \dvol_{\Sig} \\
	&\le& C_{A_{0}} \cdot \Bigl( E(A,u) + \volume(\Sig) \Bigr)
\end{eqnarray*}
whenever $\norm{A-A_{0}}_{C^{0}(\Sig)}$ is sufficiently small. This proves (ii).

\medskip

\noindent{\textbf{Proof of (iii):}} Let $z_{0} \in \Sig$. By Corollary~\ref{CorollaryAPrioriEstimateOnSurface} there exist constants $\hbar > 0$, $C^{\prime} > 0$ and $R > 0$, not depending on $(A,u)$, such that for all $0 < r < R$
\begin{equation} \label{InequalityAPrioriEstimateVorticesVsPseudoholomorphicCurves}
	E\bigl( A,u;B_{r}(z_{0}) \bigr) < \hbar \quad \Longrightarrow \quad
\frac{1}{2} \Abs{\dop_{A}\!u(z_{0})}_{J}^{2} + \Abs{\mu(u(z_{0}))}^{2} \le \frac{C^{\prime}}{r^{2}} \cdot E\bigl( A,u;B_{r}(z_{0}) \bigr).
\end{equation}
Define constants
\[
K \deq \norm{\mu}_{C^{0}(M)}^{2}
\]
and
\[
\de \deq \frac{\hbar}{4}, \quad C \deq 4\,C_{A_{0}} \cdot \bigl( 1 + C^{\prime} \bigr) \cdot \left( 1 + K \cdot \sup_{0<r<R} \frac{\volume(B_{r}(z_{0}))}{r^{2}} \right),
\]
where $C_{A_{0}}$ is the constant from Lemma~\ref{LemmaComparisonOfMetricsOnP(X)} and $\volume( B_{r}(z_{0}) )$ denotes the area of $B_{r}(z_{0})$ with respect to $\dvol_{\Sig}$. Choose a positive constant $r_{0} < R$ such that
\begin{eqnarray} \label{InequalityDefinitionOfr_0VorticesVsPseudoholomorphicCurves}
	\volume\bigl( B_{r}(z_{0}) \bigr) \le \frac{\hbar}{2K}
\end{eqnarray}
for all $0 < r < r_{0}$. Assume now that
\begin{eqnarray} \label{InequalityAssumptionBoundOnDirichletEnergyGromorvConvergence}
	r < r_{0} \quad \text{and} \quad E_{J_{A}}\bigl( \utilde;B_{r}(z_{0}) \bigr) < \de.
\end{eqnarray}
Using Lemma~\ref{LemmaComparisonOfMetricsOnP(X)} and formula (\ref{EquationYMHEnergyGlobal}) we then obtain
\begin{eqnarray*}
	&& E_{J_{A}}\bigl( \utilde;B_{r}(z_{0}) \bigr) \\
	&=& \frac{1}{2} \int_{B_{r}(z_{0})} \Abs{\dop\!\utilde}_{J_{A}}^{2} \dvol_{\Sig} \,\ge\, \frac{1}{4} \int_{B_{r}(z_{0})} \Bigl( 2 + \Abs{\dop_{A}\!u}^{2}_{J} \Bigr) \dvol_{\Sig} \\
	&=& \frac{1}{2} \int_{B_{r}(z_{0})} \left( \frac{1}{2} \Abs{\dop_{A}\!u}^{2}_{J} + \Abs{\mu(u)}^{2} \right) \dvol_{\Sig} \,-\, \frac{1}{2} \int_{B_{r}(z_{0})} \Abs{\mu(u)}^{2} \dvol_{\Sig} + \frac{1}{2} \volume\bigl( B_{r}(z_{0}) \bigr) \\
	&\ge& \frac{1}{2} \, E\bigl( A,u;B_{r}(z_{0}) \bigr) - \frac{1}{2} \, K \cdot \volume\bigl( B_{r}(z_{0}) \bigr),
\end{eqnarray*}
whence
\begin{eqnarray} \label{InequalityComparisonYMHAndDirichletEnergyGromovConvergence}
	E\bigl( A,u;B_{r}(z_{0}) \bigr) \le 2 \, E_{J_{A}}\bigl( \utilde;B_{r}(z_{0}) \bigr) + K \cdot \volume\bigl( B_{r}(z_{0}) \bigr).
\end{eqnarray}
Using inequalities (\ref{InequalityDefinitionOfr_0VorticesVsPseudoholomorphicCurves}) and (\ref{InequalityAssumptionBoundOnDirichletEnergyGromorvConvergence}), it follows from this that
\begin{eqnarray*}
	E\bigl( A,u;B_{r}(z_{0}) \bigr) < \frac{\hbar}{2} + \frac{\hbar}{2} = \hbar.
\end{eqnarray*}
Hence the a priori estimate (\ref{InequalityAPrioriEstimateVorticesVsPseudoholomorphicCurves}) implies that
\begin{eqnarray} \label{InequalityYMHEnergyDensityGromovConvergence}
	\frac{1}{2} \Abs{\dop_{A}\!u(z_{0})}_{J}^{2} + \Abs{\mu(u(z_{0}))}^{2} \le \frac{C^{\prime}}{r^{2}} \cdot E\bigl( A,u;B_{r}(z_{0}) \bigr).
\end{eqnarray}
Using Lemma~\ref{LemmaComparisonOfMetricsOnP(X)} and formula (\ref{EquationYMHEnergyGlobal}), it follows that
\begin{eqnarray*}
	\Abs{\dop\!\utilde(z_{0})}_{J_{A}}^{2}
	&\le& 2\,C_{A_{0}} \cdot \left( \frac{1}{2} \Abs{\dop_{A}\!u(z_{0})}^{2}_{J} + \Abs{\mu(u(z_{0}))}^{2} \right) + 2 \, C_{A_{0}} \\
	&\le& \frac{2\,C^{\prime}\,C_{A_{0}}}{r^{2}} \cdot E\bigl( A,u;B_{r}(z_{0}) \bigr) + 2 \, C_{A_{0}}.
\end{eqnarray*}
Applying inequality~(\ref{InequalityComparisonYMHAndDirichletEnergyGromovConvergence}) again, we finally obtain
\begin{eqnarray*}
	\Abs{\dop\!\utilde(z_{0})}_{J_{A}}^{2} &\le& \frac{4\,C^{\prime}\,C_{A_{0}}}{r^{2}} \cdot E_{J_{A}}\bigl( \utilde;B_{r}(z_{0}) \bigr) + 2\,C_{A_{0}} \cdot \left( C^{\prime} \cdot K \cdot \frac{\volume(B_{r}(z_{0}))}{r^{2}} + 1 \right) \\
	&\le& \frac{C}{r^{2}} \cdot E_{J_{A}}\bigl( \utilde;B_{r}(z_{0}) \bigr) + C.
\end{eqnarray*}
This proves (iii), and completes the proof of Proposition~\ref{PropositionVorticesAsPseudoholomorphicMaps}.

\subsection{Bubbles connect revisited}
\label{SubSectionBubblesConnectRevisited}

We prove Proposition~\ref{PropositionBubblesConnect} below, which provides preliminary results that will be needed to carry out the bubbling analysis in the proof of Theorem~\ref{TheoremGromovCompactness} in Section~\ref{SubSectionProofOfGromovCompactness}. It is adapted from \mbox{McDuff} and Salamon \cite[Prop.\,4.7.1 and Prop.\,4.7.2]{McDuff/J-holomorphic-curves-and-symplectic-topology}. We keep the notation introduced in Section \ref{SubSectionVorticesVersusPseudoholomorphicMaps}. Before stating the proposition, let us explain the set-up and fix some more notation.

We shall consider a sequence $A_{\nu}$ of smooth connections on $P$ that converges to a smooth connection $A$ on $P$ weakly in $W^{1,p}$ on $\Sig$, for some fixed $p>2$. As we have seen in Section~\ref{SubSectionVorticesVersusPseudoholomorphicMaps}, $A$ gives rise to a symplectic form $\om_{A}$ on the total space $P(\X) = P \xop_{G} \X$, defined by formula (\ref{EquationSymplecticFormOnP(X)}); moreover, $A$ and $A_{\nu}$ give rise to almost complex structures $J_{A}$ and $J_{A_{\nu}}$ on $P(\X)$, defined by formula (\ref{EquationAlmostComplexStructureOnP(X)}). Now by the Sobolev embedding theorem and by Rellich's theorem it follows that, after passing to a subsequence, $A_{\nu}$ converges to $A$ strongly in $C^{0}$ on $\Sig$. Hence we infer from Proposition~\ref{PropositionVorticesAsPseudoholomorphicMaps}\,(i), taking $A$ as reference connection, that both $J_{A}$ and $J_{A_{\nu}}$, for $\nu$ sufficiently large, are tamed by $\om_{A}$.

More generally, for any $\om_{A}$-tame almost complex structure $\Jtilde$ on $P(\X)$ we denote by
\[
\l\cdot,\cdot\r_{\Jtilde} \deq \frac{1}{2} \bigl( \om_{A}(\cdot,\Jtilde\cdot) - \om_{A}(\Jtilde\cdot,\cdot) \bigr)
\]
the Riemannian metric on $\X$ determined by $\om_{A}$ and $\Jtilde$. For $z_{0} \in \C$ and $r>0$ we denote by $B_{r}(z_{0}) \subset \C$ the closed disk of radius $r$ centered at~$z_{0}$. Recall from \cite[Sec.\,2.2]{McDuff/J-holomorphic-curves-and-symplectic-topology} that the energy of a $\Jtilde$-holomorphic curve $\map{\utilde}{B_{r}(z_{0})}{P(\X)}$ is then given by
\[
E_{\Jtilde}\bigl( \utilde,B_{r}(z_{0}) \bigr) \deq \frac{1}{2} \int_{B_{r}(z_{0})} \Abs{\dop\!\utilde}_{\Jtilde}^{2},
\]
where the norm $\Abs{\dop\!\utilde}_{\Jtilde}$ is understood with respect to the metric $\l\cdot,\cdot\r_{\Jtilde}$ on $P(\X)$ and the Euclidean metric on $\C$. Let $B \subset \C$ denote the closed unit disk.

\medskip

The main result of this subsection is the following proposition.

\begin{proposition} \label{PropositionBubblesConnect}
	Fix a holomorphic coordinate chart $\map{\vphi}{B}{\Sig}$, a point $z_{0}\in\C$, and a real number $r_{0} > 0$. Let $A$ be a smooth connection on $P$, and let $A_{\nu}$ be a sequence of smooth connections on $P$ that converges to $A$ weakly in $W^{1,p}$ on $\Sig$, for some fixed $p>2$. Suppose moreover that
\begin{itemize}[itemsep=0.3ex, itemindent=0cm, labelsep=0.3cm, leftmargin=0.7cm]
	\item $\map{u_{\nu}}{\Sig}{P(\X)}$ is a sequence of $J_{A_{\nu}}$-holomorphic sections;
	\item $\mapin{\phi_{\nu}}{B_{r_{0}}(z_{0})}{B}$ is a sequence of injective holomorphic maps;
	\item $\map{\utilde}{B_{r_{0}}(z_{0})}{P(\X)}$ is a $J_{A}$-holomorphic curve
\end{itemize}
such that the following holds.
\begin{enumerate}[topsep=1ex, itemsep=0.3ex, itemindent=0cm, labelsep=1ex, leftmargin=1cm]
	\item[(a)] The sequence $\phi_{\nu}$ is uniformly bounded in $W^{2,\infty}$ on $B_{r_{0}}(z_{0})$.
	\item[(b)] The sequence $\utilde_{\nu} \deq u_{\nu} \circ \vphi \circ \phi_{\nu}$ converges to $\utilde$ in $C^{1}$ on compact subsets of $B_{r_{0}}(z_{0}) \setminus \{z_{0}\}$.
	\item[(c)] The limit
\[
m_{0} \deq \lim_{\veps\to 0}\lim_{\nu\to\infty} E_{J_{A_{\nu}}}\bigl( \utilde_{\nu}; B_{\veps}(z_{0}) \bigr)
\]
exists and is positive.
	\item[(d)] There exist constants $r_{0} > 0$ and $\de,C>0$ such that for every~$\nu$ the section $u_{\nu}$ satisfies a mean value inequality of the following form: For all $z_{0} \in \pc$ and all $0 < r < r_{0}$,
\[
E_{J_{A_{\nu}}} \bigl( u_{\nu};B_{r}(z_{0}) \bigr) < \de \quad \Longrightarrow \quad \Abs{\dop\!u_{\nu}(z_{0})}_{J_{A_{\nu}}}^{2} \le \frac{C}{r^{2}} \cdot E_{J_{A_{\nu}}}\bigl( u_{\nu};B_{r}(z_{0}) \bigr) + C.
\]
\end{enumerate}
Then there exist a sequence of M\"obius transformations $\psi_{\nu}\in\Aut(\proj{1}) \cong \PSL{2}{\C}$, a $J$-holomorphic sphere $\map{v}{\proj{1}}{P(\X)_{\vphi(0)}} \cong \X$ in the fiber of $P(\X)$ over the point $\vphi(0)$, and finitely many distinct points $z_{1},\ldots,z_{\ell}$, $z_{\infty}$ on $\proj{1}$ such that, after passing to a subsequence, the following holds.
\begin{enumerate}[topsep=1ex, itemsep=0.3ex, itemindent=0cm, labelsep=1ex, leftmargin=1cm]
	\item The sequence $\psi_{\nu}$ converges to $z_{0}$ in $C^{\infty}$ on compact subsets of $\proj{1}\setminus\{z_{\infty}\} \cong \C$.
	\item The sequence $v_{\nu} \deq \utilde_{\nu} \circ \psi_{\nu}$ converges to $v$ in $C^{1}$ on compact subsets of
\[
\proj{1}\setminus\{z_{1},\ldots,z_{\ell},z_{\infty}\} \subset \C,\]
and the limits
\[
m_{j} \deq \lim_{\veps\to 0}\lim_{\nu\to\infty} E_{J_{A_{\nu}}}\bigl( v_{\nu};B_{\veps}(z_{j}) \bigr)
\]
exist and are positive for $j=1,\ldots,\ell$.
	\item No energy gets lost in the limit, that is,
\[
E_{J}(v) + \sum_{j=1}^{\ell} m_{j} = m_{0}.
\]
	\item If $v$ is constant then $\ell \ge 2$.
\end{enumerate}
Moreover, bubbles connect in the sense that
\[
\utilde(z_{0}) = v(z_{\infty}),
\]
and, for every $\eps>0$, there exist constants $\de_{0}>0$ and $\nu_{0}$ such that
\[
\dop(z,z_{0}) + \dop\bigl( (\psi_{\nu})^{-1}(z),z_{\infty} \bigr) < \de_{0} \quad \Longrightarrow \quad \dop_{J_{A}}\bigl( \utilde_{\nu}(z),\utilde(z_{0}) \bigr) < \eps
\]
for every $\nu \ge \nu_{0}$ and every $z\in\proj{1}$.
\end{proposition}

\medskip

The remainder of this subsection is devoted to the proof of Proposition~\ref{PropositionBubblesConnect}. It is largely the same as the proof of \cite[Prop.\,4.7.1 and Prop.\,4.7.2]{McDuff/J-holomorphic-curves-and-symplectic-topology} except for certain modifications resulting from the fact that the assumption in Proposition~\ref{PropositionBubblesConnect} on convergence of the almost complex structures is weaker than the respective assumption in \cite[Prop.\,4.7.1 and Prop.\,4.7.2]{McDuff/J-holomorphic-curves-and-symplectic-topology}.

More concretely, we are assuming that the sequence of connections $A_{\nu}$ converges to $A$ weakly in~$W^{1,p}$ for some $p>2$. Hence, after passing to a subsequence, $A_{\nu}$ converges to $A$ strongly in~$C^{0}$. Therefore, we see from formula (\ref{EquationAlmostComplexStructureOnP(X)}) that the almost complex structures $J_{A_{\nu}}$ will in general converge to $J_{A}$ only in $C^{0}$, in contrast to \cite[Prop.\,4.7.1 and Prop.\,4.7.2]{McDuff/J-holomorphic-curves-and-symplectic-topology}, where the sequence~$J_{\nu}$ is assumed to converge to $J$ in $C^{\infty}$. We thus conclude that those arguments in the proofs of \cite[Prop.\,4.7.1 and Prop.\,4.7.2]{McDuff/J-holomorphic-curves-and-symplectic-topology} that rely on uniform estimates involving the derivatives of the almost complex structures~$J_{\nu}$ will not carry over to our situation without modification. There are basically two types of such arguments: elliptic bootstrapping for rescaled $J_{\nu}$-holomorphic curves on the one hand, and any argument involving a uniform mean value inequality for sequences of $J_{\nu}$-holomorphic curves based on \cite[Lemma 4.3.1]{McDuff/J-holomorphic-curves-and-symplectic-topology} on the other hand. In fact, a careful examination of the proof of \cite[Lemma~4.3.1]{McDuff/J-holomorphic-curves-and-symplectic-topology} reveals that the constant $\de$ in the statement of this lemma depends on the first and second derivatives of the almost complex structure (see also the comments after \cite[Lemma 4.7.3]{McDuff/J-holomorphic-curves-and-symplectic-topology}).

We now discuss in detail how to modify those critical arguments in order to make them work under our assumptions as well.

\smallskip

First, we note that elliptic bootstrapping for rescaled $J_{\nu}$-holomorphic curves enters precisely into Steps 2 and 3 of the proof of \cite[Prop.\,4.7.1]{McDuff/J-holomorphic-curves-and-symplectic-topology}. More precisely, it enters via \cite[Lemma~4.6.5 and Thm.\,4.6.1]{McDuff/J-holomorphic-curves-and-symplectic-topology}, the proofs of which are in turn based on the basic compactness theorem \cite[Thm.\,4.1.1]{McDuff/J-holomorphic-curves-and-symplectic-topology}. When adapted to our situation this argument essentially boils down to elliptic bootstrapping for the sequence of rescaled $J_{A_{\nu}}$-holomorphic curves
\[
v_{\nu} \deq \utilde_{\nu} \circ \psi_{\nu}, \quad \psi_{\nu}(z) = \de^{\nu} z, \quad \de^{\nu} \to 0
\]
in Step 2 and
\[
w_{\nu}(z) \deq \utilde_{\nu}(\veps^{\nu} z), \quad \veps^{\nu} \to 0
\]
in Step 3 of the proof of \cite[Prop.\,4.7.1]{McDuff/J-holomorphic-curves-and-symplectic-topology}---here we assume that $z_{0}=0$ by Step 1 of that proof. The key idea now is to exploit the fact that the curves $\utilde_{\nu} = u_{\nu} \circ \vphi \circ \phi_{\nu}$ factor through the $J_{A_{\nu}}$-holomorphic sections $\map{u_{\nu}}{\Sig}{P(\X)}$. This will eventually provide us with certain perturbed $J$-holomorphic curve equations for the curves $v_{\nu}$ and $w_{\nu}$ to which standard elliptic bootstrapping arguments apply. Basically, we will follow the bubbling argument for vortices from the proof of \cite[Thm.\,3.4]{Cieliebak/The-symplectic-vortex-equations-and-invariants-of-Hamiltonian-group-actions}.

To start with, we note that by Lemma \ref{LemmaVorticesAsPseudoholomorphicMaps} the $J_{A_{\nu}}$-holomorphic sections~$u_{\nu}$ of the bundle~$P(\X)$ satisfy the first vortex equation
\[
\delbar_{J,A_{\nu}}(u_{\nu}) = \frac{1}{2} \, \bigl( \dop_{A_{\nu}}\!u_{\nu} + J(u_{\nu}) \circ \dop_{A_{\nu}}\!u_{\nu} \circ j_{\Sig} \bigr) = 0
\]
when considered as $G$-equivariant maps $\map{u_{\nu}}{P}{\X}$ as in Remark \ref{RemarkMapVersusSection}. By Remark \ref{RemarkLocalFromGlobal}, locally in the chart $\map{\vphi}{B}{\Sig}$ this equation takes the form
\begin{eqnarray*} \label{EquationFirstVortexEquationBubbling}
	\del_{s}\!u_{\nu}^{\loc}  + J\bigl( u_{\nu}^{\loc} \bigr) \del_{t}\!u_{\nu}^{\loc} = - X_{\Phi_{\nu}}\bigl( u_{\nu}^{\loc} \bigr) - J\bigl( u_{\nu}^{\loc} \bigr) X_{\Psi_{\nu}}\bigl( u_{\nu}^{\loc} \bigr).
\end{eqnarray*}
Here $\vphitilde^{\,\ast}A_{\nu} = \Phi_{\nu} \dop\!s + \Psi_{\nu} \dop\!t$ and $u_{\nu}^{\loc} \deq u_{\nu} \circ \vphitilde$, where $\map{\vphitilde}{B}{P}$ is some lift of $\vphi$. A straightforward calculation as in \cite[App.\,B.2]{Ziltener/Symplectic-vortices-on-the-complex-plane-and-quantum-cohomology} then shows that the rescaled curves $v_{\nu} = \utilde_{\nu} \circ \psi_{\nu}$ satisfy the equation
\begin{equation} \label{EquationRescaledFirstVortexEquation}
	\hspace{1cm} \del_{s}\!v_{\nu} + J(v_{\nu}) \del_{t}\!v_{\nu} \\ = - \de^{\nu} \, \overline{\phi_{\nu}^{\prime}} \cdot \bigl( X_{\Phi_{\nu} \circ \phi_{\nu} \circ \psi_{\nu}}(v_{\nu}) + J(v_{\nu}) X_{\Psi_{\nu} \circ \phi_{\nu} \circ \psi_{\nu}}(v_{\nu}) \bigr).
\end{equation}
Here $\overline{\phi_{\nu}^{\prime}}$ denotes the complex conjugate of the derivative $\partial_{z}\phi_{\nu}$ of the holomorphic map $\phi_{\nu}$, and the product on the right-hand side is defined by
\[
(s + it) \cdot w \deq s \cdot w + t \cdot J(x)w
\]
for all numbers $s + it \in \C$ and all tangent vectors $w \in T_{x}\X$ for $x \in \X$.

The elliptic bootstrapping for the sequence $v_{\nu}$ is then based on equation (\ref{EquationRescaledFirstVortexEquation}), as follows. Assumption (a) of Proposition~\ref{PropositionBubblesConnect} provides a uniform $W^{2,\infty}$-bound for the sequence $\phi_{\nu}$, and $\de^{\nu} \to 0$, so the first factor on the right-hand side of equation (\ref{EquationRescaledFirstVortexEquation}) is uniformly bounded in~$W^{1,\infty}$. Likewise, the assumption of Proposition~\ref{PropositionBubblesConnect} provides a uniform $W^{1,p}$-bound, $p>2$, for the sequence of connections $A_{\nu}$, whence the functions $\Phi_{\nu}$ and $\Psi_{\nu}$ are both uniformly bounded in~$W^{1,p}$ on $B$. Again by assumption (a) of Proposition~\ref{PropositionBubblesConnect} it follows that the functions $\Phi_{\nu} \circ \phi_{\nu} \circ \psi_{\nu}$ and $\Psi_{\nu} \circ \phi_{\nu} \circ \psi_{\nu}$ are uniformly bounded in $W^{1,p}$ on compact subsets of $\C$. By construction, the sequence $v_{\nu}$ is uniformly bounded in $W^{1,\infty}$ on a certain compact subset $K \subset \C$ depending on whether we are considering the proof of \cite[Lemma 4.6.5]{McDuff/J-holomorphic-curves-and-symplectic-topology} or \cite[Thm.\,4.6.1]{McDuff/J-holomorphic-curves-and-symplectic-topology}. Hence the second factor on the right-hand side of equation (\ref{EquationRescaledFirstVortexEquation}) satisfies a uniform $W^{1,p}$-bound on $K$. We conclude that the right-hand side of equation (\ref{EquationRescaledFirstVortexEquation}) is uniformly bounded in $W^{1,p}$ on $K$. Hence elliptic regularity implies that the sequence $v_{\nu}$ is uniformly bounded in $W^{2,p}$ on $K$ (see \cite[App.\,B.4]{McDuff/J-holomorphic-curves-and-symplectic-topology}). By the Banach-Alaoglu theorem, the Sobolev embedding theorem, and Rellich's theorem it follows that, after passing to a subsequence, the sequence $v_{\nu}$ converges weakly in~$W^{2,p}$ and strongly in $C^{1}$ on $K$ to a $J$-holomorphic curve $\map{v}{K}{\X}$ satisfying the equation
\[
\del_{s}\!v + J(v) \del_{t}\!v = 0,
\]
which is obtained from (\ref{EquationRescaledFirstVortexEquation}) in the limit $\nu \to \infty$.

The argument for the curves $w_{\nu}$ is identical.

Observe that, in general, we cannot expect better convergence than $C^{1}$ for $v_{\nu}$ and $w_{\nu}$ since the sequence $A_{\nu}$ is only assumed to be bounded in $W^{1,p}$ and hence the bootstrapping terminates after just one step. However, this is sufficient for all subsequent arguments in the proof of \cite[Prop.\,4.7.1]{McDuff/J-holomorphic-curves-and-symplectic-topology}.

Note also that the above argument shows that the sequence of curves $v_{\nu} = \utilde_{\nu} \circ \psi_{\nu}$ converges to a $J$-holomorphic sphere $\map{v}{\proj{1}}{P(\X)_{\vphi(0)} \cong \X}$ in the fiber of $P(\X)$ over the bubbling point $\vphi(0)$.

\smallskip

Second, we investigate all arguments in the proof of \cite[Prop.\,4.7.1 and Prop.\,4.7.2]{McDuff/J-holomorphic-curves-and-symplectic-topology} that rely on a uniform mean value inequality for sequences of $J_{\nu}$-holomorphic curves based on \cite[Lemma 4.3.1]{McDuff/J-holomorphic-curves-and-symplectic-topology}. Our strategy will be to deduce all those mean value inequalities not from the mean value inequality provided by \cite[Lemma 4.3.1]{McDuff/J-holomorphic-curves-and-symplectic-topology} but from the mean value inequality for the sections~$u_{\nu}$ provided by assumption (d) of Proposition~\ref{PropositionBubblesConnect}. Note that this mean value inequality is slightly weaker than the mean value inequality of \cite[Lemma~4.3.1]{McDuff/J-holomorphic-curves-and-symplectic-topology} since it contains an additive constant~$C$.

Now the only step in the proof of \cite[Prop.\,4.7.1 and Prop.\,4.7.2]{McDuff/J-holomorphic-curves-and-symplectic-topology} where \cite[Lemma~4.3.1]{McDuff/J-holomorphic-curves-and-symplectic-topology} is used is in the proof of \cite[Lemma~4.7.3]{McDuff/J-holomorphic-curves-and-symplectic-topology}. In order to complete the proof of Proposition~\ref{PropositionBubblesConnect} we will therefore first prove a variant of \cite[Lemma~4.7.3]{McDuff/J-holomorphic-curves-and-symplectic-topology}, see Lemma \ref{LemmaAnnulusLemma} below, that relies on the mean value inequality from assumption (d) instead of the mean value inequality from \cite[Lemma~4.3.1]{McDuff/J-holomorphic-curves-and-symplectic-topology}. We will state this lemma more generally for any closed symplectic manifold, which we will denote by $(\X,\om)$ by abuse of notation. Moreover, for $r < R$ we denote by $A(r,R) \deq \{ z\in\C \,|\, r \le \Abs{z} \le R \}$ the closed annulus in $\C$ of inner radius $r$ and outer radius $R$ centered at the origin.

\begin{lemma} \label{LemmaAnnulusLemma}
	Let $(\X,\om)$ be a closed symplectic manifold and assume that $J$ is an $\om$-tame almost complex structure on $\X$. Fix constants $\de,C>0$. Then, for every $0<\mu<1$, there exist constants $R_{0}>0$, $\de_{0} \deq \de_{0}(\de,C,\mu) > 0$ and $c \deq c(C,\mu) > 0$ such that the following holds.

Suppose that $0 < r < R < R_{0}$ with $R/r \ge 4 e^{2}$, and that $\map{u}{A(r,R)}{\X}$ is a $J$-holomorphic curve that satisfies a mean value inequality of the following form: For all $z\in A(r,R)$ and all~$\rho>0$ such that $B_{\rho}(z) \subset A(r,R)$,
\begin{equation} \label{InequalityMeanValueInequalityAnnulusLemma}
	E_{J}\bigl( u;B_{\rho}(z) \bigr) < \de \quad \Longrightarrow \quad \frac{1}{2} \Abs{\dop\!u(z)}^{2}_{J} \le \frac{C}{\rho^{2}} \cdot E_{J}\bigl( u;B_{\rho}(z) \bigr) + C.
\end{equation}
Here the norm $\Abs{\dop\!u}_{J}$ is understood with respect to the Riemannian metric $\l\cdot,\cdot\r_{J}$ on $\X$ determined by $\om$ and $J$, and the Euclidean metric on $\C$. Then, if the energy of $u$ is sufficiently small in the sense that
\[
E_{J}(u) \deq E_{J}\bigl( u;A(r,R) \bigr) < \de_{0},
\]
we have estimates
\begin{eqnarray} \label{InequalityFirstEstimateAnnulusLemma}
	E_{J}\bigl( u;A(e^{T}r,e^{-T}R) \bigr) \le c \cdot e^{-2\mu T} \cdot E_{J}(u)
\end{eqnarray}
and
\begin{eqnarray} \label{InequalitySecondEstimateAnnulusLemma}
	\sup_{z_{1},z_{2}\in A(e^{T}r,e^{-T}R)} \dop_{J}\bigl( u(z_{1}),u(z_{2}) \bigr) \le c \cdot \Bigl( e^{-\mu T} \cdot \sqrt{E_{J}(u)} + R \Bigr)
\end{eqnarray}
for all $T$ such that $\log2 \le T \le \log\sqrt{R/r}$. Here $\dop_{J}$ denotes the distance function on $\X$ induced by the metric $\l\cdot,\cdot\r_{J}$.
\end{lemma}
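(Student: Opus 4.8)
The plan is to mirror the proof of Lemma~4.7.3 in \cite{McDuff/J-holomorphic-curves-and-symplectic-topology}, substituting the Mean Value Inequality~(\ref{InequalityMeanValueInequalityAnnulusLemma}) for the interior gradient bound of Lemma~4.3.1 there (which uses two derivatives of the almost complex structure) and carefully tracking the additive constant $C$. First I would pass to logarithmic coordinates $z = e^{\zet}$, $\zet = \sig + \mathrm{i}\th$, under which $A(r,R)$ becomes the cylinder $[\log r,\log R]\xop S^{1}$; since both the energy and the condition of being $I$-holomorphic are conformally invariant, $u$ becomes an $I$-holomorphic cylinder of the same energy. Recentering so that the cylinder is $[-L,L]\xop S^{1}$ with $L \deq \tfrac12\log(R/r)$, the shrunk annulus $A(e^{T}r,e^{-T}R)$ corresponds to the subcylinder $\abs{\sig}\le L-T$. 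The hypotheses $\log 2\le T\le\log\sqrt{R/r}=L$ guarantee, respectively, that the balls used below fit inside $A(r,R)$ and that this subcylinder is nonempty.

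The exponential decay is driven by the local symplectic action and, crucially, does not involve $C$. For each $\sig$ let $\ga_{\sig}(\th)\deq u(\sig,\th)$ denote the loop around the cylinder, with length $\ell(\sig)$ and local symplectic action $a(\sig)$ in the sense of \cite{McDuff/J-holomorphic-curves-and-symplectic-topology}, Section~4.4. Differentiating the action and using the Cauchy--Riemann equation $\del_{\sig}u + I(u)\del_{\th}u = 0$ gives the pointwise identity $a'(\sig) = \int_{0}^{2\pi}\abs{\del_{\th}u}^{2}_{I}\,\dop\th \ge 0$, so $a$ is nondecreasing and the energy of $u$ over any subcylinder is comparable, with constants depending only on the taming, to the corresponding increment of $a$. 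On the other hand Cauchy--Schwarz yields $\ell(\sig)^{2}\le 2\pi\,a'(\sig)$, and once the loops are known to be short the isoperimetric inequality of \cite{McDuff/J-holomorphic-curves-and-symplectic-topology}, Theorem~4.4.1 — valid for merely continuous tame $I$, as already noted in Section~\ref{SectionRemovalOfSingularities} — gives $\abs{a(\sig)}\le c\,\ell(\sig)^{2}$ for any prescribed $c>1/4\pi$. Choosing $c\deq 1/(4\pi\mu)$, which exceeds $1/4\pi$ precisely because $\mu<1$, these combine into the differential inequality $\abs{a(\sig)}\le \tfrac{1}{2\mu}\,a'(\sig)$. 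Since $a$ is nondecreasing, this forces $\abs{a(\sig)}$ to decay at rate $2\mu$ as $\sig$ moves away from either end. Starting the decay from a circle within unit cylinder-distance of each end on which $a'$, and hence by the isoperimetric bound also $\abs{a}$, is at most a fixed multiple of $E_{I}(u)$ — such a circle exists by averaging $a'$ over a unit interval — I obtain $\abs{a(\pm(L-T))}\lesssim e^{-2\mu T}E_{I}(u)$. As the energy of $u$ over $\abs{\sig}\le L-T$ is comparable to $a(L-T)-a(-(L-T))\le\abs{a(L-T)}+\abs{a(-(L-T))}$, the first estimate~(\ref{InequalityFirstEstimateAnnulusLemma}) follows, with no additive error.

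The Mean Value Inequality~(\ref{InequalityMeanValueInequalityAnnulusLemma}) is needed only at two points, and this is where $C$ and the factor $R$ enter. To license the isoperimetric inequality I must first know $\ell(\sig)<\ell_{0}$ uniformly. Applying~(\ref{InequalityMeanValueInequalityAnnulusLemma}) on a ball $B_{\rho}(z)$ whose radius $\rho$ is a fixed fraction of $\abs{z}$ — such a ball lies in $A(r,R)$ because $T\ge\log 2$ — together with the global smallness $E_{I}(u)<\de_{0}\le\de$, produces the pointwise bound $\abs{\dop u(z)}_{I}\lesssim \sqrt{E_{I}(u)}/\abs{z}+\sqrt{C}$. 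Integrating around the circle of radius $\abs{z}=e^{\sig}$, the first summand contributes $\lesssim\sqrt{E_{I}(u)}$ and the additive summand contributes $\lesssim\sqrt{C}\,\abs{z}\le\sqrt{C}\,R$, so $\ell(\sig)\lesssim\sqrt{E_{I}(u)}+\sqrt{C}\,R$, which is below $\ell_{0}$ once $\de_{0}$ and $R_{0}$ are small. The same pointwise bound, now fed the exponential energy decay just established, gives the diameter estimate: integrating $\abs{\dop u}_{I}$ along radial and circular paths inside $A(e^{T}r,e^{-T}R)$, the terms $\sqrt{E_{I}(u)}/\abs{z}$ sum geometrically to $\lesssim e^{-\mu T}\sqrt{E_{I}(u)}$ while the additive $\sqrt{C}$ terms integrate to $\lesssim R$, which is the second estimate~(\ref{InequalitySecondEstimateAnnulusLemma}).

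The one genuine difficulty, as opposed to bookkeeping, is exactly this separation of the two roles of $C$: I must check that the additive constant enters only through the verification that the loops are short and through the diameter estimate — where it is absorbed into the harmless $O(R)$ error permitted by~(\ref{InequalitySecondEstimateAnnulusLemma}) and by the smallness of $R<R_{0}$ — while leaving untouched the action--energy--isoperimetric chain that produces the clean, additive-free exponential decay~(\ref{InequalityFirstEstimateAnnulusLemma}). A secondary point requiring care is that $I$ is only $\om$-tame and of class $C^{0}$: the norms $\abs{\del_{\sig}u}_{I}$ and $\abs{\del_{\th}u}_{I}$ then agree only up to uniform constants, so every step above must be phrased through the conformally invariant energy, the differential action identity, and the $C^{0}$-robust isoperimetric inequality, rather than through any pointwise elliptic estimate that would demand more regularity of $I$.
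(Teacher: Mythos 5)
Your proposal is correct and follows essentially the same route as the paper's proof: the $C^{0}$-robust isoperimetric inequality combined with the exact action--energy identity for tamed structures yields the clean differential inequality $\dot\veps \le -2\mu\,\veps$ for the exponential decay, while the hypothesized mean value inequality is used only to verify that the loops are shorter than $\ell_{0}$ (absorbing the additive constant via $R<R_{0}$) and to produce the pointwise gradient bounds that integrate to the diameter estimate with its $O(R)$ error. The passage to cylindrical coordinates and the direct differentiation of the action, in place of the paper's two-ended energy function $\veps(t)$ and its filled-sphere Stokes argument, are cosmetic variants of the same computation.
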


\begin{proof}
	The proof is adapted from the proof of \cite[Lemma 4.7.3]{McDuff/J-holomorphic-curves-and-symplectic-topology}. Fix constants $\de,C > 0$ and $0<\mu<1$, and define $c^{\prime} \deq c^{\prime}(\mu) \deq 1/4\pi\mu$.

We first recall some notation from \cite[Sec.\,4.4]{McDuff/J-holomorphic-curves-and-symplectic-topology}. For any smooth loop $\map{\ga}{\del\!B}{\X}$, where~$B$ denotes the closed unit disk in $\C$, we denote by $\ell(\ga)$ its length with respect to the metric $\l\cdot,\cdot\r_{J}$. If $\ell(\ga)$ is smaller than the injectivity radius of $\X$, then $\ga$ admits a smooth local extension $\map{u_{\ga}}{B}{\X}$ such that $u_{\ga}\bigl( e^{i\th} \bigr) = \ga(\th)$ for every $\th\in[0,2\pi]$ and the image of $u_{\ga}$ is contained in a geodesic ball of radius not greater than half the injectivity radius. In this case the local symplectic action of $\ga$ is given by
\begin{eqnarray} \label{EqualityLocalSymplecticActionLemma}
	a(\ga) \deq - \int_{B} u_{\ga}^{\ast}\,\om.
\end{eqnarray}
Note that it does not depend on the choice of the extension $u_{\ga}$. Recall that $c^{\prime}>1/4\pi$ by assumption. Hence by the isoperimetric inequality from \cite[Thm.\,4.4.1]{McDuff/J-holomorphic-curves-and-symplectic-topology} there exists a constant $\de_{0} \deq \de_{0}(\de,C,\mu) > 0$ such that
\begin{eqnarray} \label{InequalityAssumptionOnDelta_0}
	\de_{0} \le \de
\end{eqnarray}
and
\begin{eqnarray} \label{InequalityIsoperimetricInequalityBubblesConnect}
	\ell(\ga) < 4\pi \sqrt{2\,C \de_{0}} \quad \Longrightarrow \quad \Abs{a(\ga)} \le c^{\prime} \cdot \ell(\ga)^{2}
\end{eqnarray}
for every smooth loop $\map{\ga}{\del\!B}{\X}$. We are now ready for the actual proof of the lemma.

\smallskip

Assume that $E_{J}(u) \deq E_{J}\bigl( u;A(r,R) \bigr) < \de_{0}$.

\smallskip

\noindent{Proof of (\ref{InequalityFirstEstimateAnnulusLemma}):} For $r \le \rho \le R$ let $\map{\ga_{\rho}}{\del\!B}{\X}$ be the loop defined by $\ga_{\rho}(\th) \deq u(\rho\,e^{i\th})$ for $\th\in[0,2\pi]$. Furthermore, for $\log2 \le t \le \log\sqrt{R/r}$ we define a smooth function $t \mapsto \veps(t)$ by
\begin{eqnarray} \label{EqualityFunctionEpsilon}
	\veps(t) \deq E_{J}\Bigl( u;A\bigl( e^{t}r,e^{-t}R \bigr) \Bigr) = \frac{1}{2} \int_{A(e^{t}r,e^{-t}R)} \Abs{\dop\!u}_{J}^{2}.
\end{eqnarray}
It will be useful to keep in mind that the condition $\log2 \le t \le \log\sqrt{R/r}$ is equivalent to $2r \le e^{t}r \le e^{-t}R \le R/2$. Fix a number $T$ such that
\[
\log2 \le T \le \log\sqrt{R/r}
\]
and let $\rho$ such that $2r \le e^{T}r \le \rho \le e^{-T}R \le R/2$. Then, for any $\th\in[0,2\pi]$, the disk $B_{\rho/2}(\rho e^{i\th})$ is contained in the annulus $A(r,R)$. Since $E_{J}(u) < \de_{0} \le \de$ by the assumption of the lemma and by (\ref{InequalityAssumptionOnDelta_0}), the mean value inequality (\ref{InequalityMeanValueInequalityAnnulusLemma}) yields
\begin{eqnarray}\label{InequalityAPrioriEstimateAnnulusLemmaBubblesConnectRevisited}
	\hspace{1cm} \frac{1}{2} \Abs{\dop\!u\bigl( \rho e^{i\th} \bigr)}^{2}_{J} \le \frac{4\,C}{\rho^{2}} \cdot E_{J}(u;B_{\rho/2}\bigl(\rho e^{i\th}\bigr) \bigr) + C \le \frac{4\,C}{\rho^{2}} \cdot E_{J}(u) + C.
\end{eqnarray}
Hence
\[
\Abs{\dot{\ga_{\rho}}(\th)}_{J} = \frac{\rho}{\sqrt{2}} \cdot \Abs{\dop\!u\bigl( \rho e^{i\th} \bigr)}_{J} \le 2 \sqrt{C (E_{J}(u) + \rho^{2})} < 2 \sqrt{C (\de_{0} + \rho^{2})}.
\]
Now define $R_{0} \deq \sqrt{\de_{0}}$ and assume for the remainder of this proof that $R < R_{0}$. Then $\rho^{2} < \de_{0}$, and the previous estimate implies that
\begin{eqnarray*}
	\ell(\ga_{\rho}) = \int_{0}^{2\pi} \Abs{\dot{\ga_{\rho}}(\th)}_{J} \dop\!\th < 4\pi \sqrt{2\,C \de_{0}}.
\end{eqnarray*}
It then follows from the isoperimetric inequality~(\ref{InequalityIsoperimetricInequalityBubblesConnect}) that
\begin{eqnarray} \label{InequalityConclusionOfIsoperimetricInequality}
	\Abs{a(\ga_{\rho})} \le c^{\prime} \cdot \ell(\ga_{\rho})^{2}.
\end{eqnarray}
As in \cite[Rmk.\,4.4.2]{McDuff/J-holomorphic-curves-and-symplectic-topology} we denote by $\map{u_{\rho}}{B}{\X}$ the local extension of the loop $\ga_{\rho}$ defined by the formula $u_{\rho}(\rho^{\prime} e^{i\th}) \deq \exp_{\ga_{\rho}(0)}(\rho^{\prime} \, \xi(\th))$ for $0 < \rho^{\prime} < \rho$ and $\th\in[0,2\pi]$, where the map $\map{\xi}{[0,2\pi]}{T_{\ga_{\rho}(0)}\X}$ is determined by the condition $\exp_{\ga_{\rho}(0)}(\xi(\th)) = \ga_{\rho}(\th)$. For $\log2 \le t \le \log\sqrt{R/r}$ consider the sphere $\map{v_{t}}{S^{2}}{\X}$ that is obtained from the restriction of the map~$u$ to the annulus $A(e^{t}r,e^{-t}R)$ by filling in the boundary circles $\ga_{e^{t}r}$ and $\ga_{e^{-t}R}$ with the local extensions $u_{e^{t}r}$ and $u_{e^{-t}R}$. The sphere $\map{v_{t}}{S^{2}}{\X}$ is contractible because it is the boundary of the 3-ball consisting of the union of the disks $\map{u_{\rho^{\prime}}}{B}{\X}$ for $e^{t}r \le \rho^{\prime} \le e^{-t}R$, whence
\[
0 = \int_{S^{2}} v_{t}^{\ast}\om = \int_{A(e^{t}r,e^{-t}R)} u^{\ast}\om - \int_{B} u_{e^{t}r}^{\ast}\,\om + \int_{B} u_{e^{-t}R}^{\ast}\,\om.
\]
Using the energy identity \cite[Lemma 2.2.1]{McDuff/J-holomorphic-curves-and-symplectic-topology}, we may write this equality in terms of the function~(\ref{EqualityFunctionEpsilon}) and the local symplectic action (\ref{EqualityLocalSymplecticActionLemma}) as
\[
\veps(t) = - a(\ga_{e^{t}r}) + a(\ga_{e^{-t}R}).
\]

\noindent Thus, applying inequality~(\ref{InequalityConclusionOfIsoperimetricInequality}) and H\"older's inequality we obtain
\begin{eqnarray*}
	\veps(t) &\le& c^{\prime} \cdot \ell(\ga_{e^{t}r})^{2} + c^{\prime} \cdot \ell(\ga_{e^{-t}R})^{2} \\
	&=& c^{\prime} \cdot \left( \int_{0}^{2\pi} \Abs{\dot{\ga}_{e^{t}r}(\th)}_{J} \dop\!\th \right)^{2} + c^{\prime} \cdot \left( \int_{0}^{2\pi} \Abs{\dot{\ga}_{e^{-t}R}(\th)}_{J} \dop\!\th \right)^{2} \\
	&=& \frac{c^{\prime} \bigl(e^{t}r\bigr)^{2}}{2} \cdot \left( \int_{0}^{2\pi} \Abs{\dop\!u\bigl(e^{t}r e^{i\th}\bigr)}_{J} \dop\!\th \right)^{2} + \frac{c^{\prime} \bigl(e^{-t}R\bigr)^{2}}{2} \cdot \left( \int_{0}^{2\pi} \Abs{\dop\!u\bigl(e^{-t}R e^{i\th}\bigr)}_{J} \dop\!\th \right)^{2} \\
	&\le& 2\pi c^{\prime} \cdot \Bigg( \frac{1}{2} \bigl(e^{t}r\bigr)^{2} \cdot \int_{0}^{2\pi} \Abs{\dop\!u\bigl(e^{t}r e^{i\th}\bigr)}_{J}^{2} \dop\!\th + \frac{1}{2} \bigl(e^{-t}R\bigr)^{2} \cdot \int_{0}^{2\pi} \Abs{\dop\!u\bigl(e^{-t}R e^{i\th}\bigr)}_{J}^{2} \dop\!\th \Bigg).
\end{eqnarray*}
To estimate this further, recall that
\[
\veps(t) = \frac{1}{2} \int_{A(e^{t}r,e^{-t}R)} \Abs{\dop\!u}_{J}^{2} = \frac{1}{2} \int_{e^{t}r}^{e^{-t}R} \rho \int_{0}^{2\pi} \Abs{\dop\!u\bigl( \rho e^{i\th} \bigr)}_{J}^{2} \dop\!\th \dop\!\rho,
\]
whence
\[
\dot\veps(t) = -\frac{1}{2} (e^{t}r)^{2} \int_{0}^{2\pi} \Abs{\dop\!u\bigl( e^{t}r e^{i\th} \bigr)}_{J}^{2} \dop\!\th - \frac{1}{2} (e^{-t}R)^{2} \int_{0}^{2\pi} \Abs{\dop\!u\bigl( e^{-t}R e^{i\th} \bigr)}_{J}^{2} \dop\!\th.
\]
We conclude that
\[
\veps(t) \le -2\pi c^{\prime} \cdot \dot{\veps}(t),
\]
which implies
\[
\dot{\veps}(t) \le -2\mu \cdot \veps(t) < 0.
\]
Integrating this differential inequality from $\log2$ to $T$ yields
\begin{eqnarray} \label{InequalityFirstInequalityAnnulusLemma}
	\veps(T) \le e^{-2\mu (T-\log2)} \cdot \veps(\log2) \le e^{-2\mu T} \cdot e^{2\mu} \cdot E_{J}(u).
\end{eqnarray}
Since $\mu > 0$, inequality (\ref{InequalityFirstEstimateAnnulusLemma}) follows.

\smallskip

\noindent{Proof of (\ref{InequalitySecondEstimateAnnulusLemma}):} Let us denote $\rho_{0} \deq \sqrt{rR}$. The assumption $R/r \ge 4 e^{2}$ then implies that $2r \le \rho_{0} \le R/2$. We begin with the following observation.

\begin{claim*}
	The map $u$ satisfies the following estimates, where $\rho_{0} \deq \sqrt{rR}$.
\begin{enumerate}[topsep=1ex, itemsep=0.3ex, itemindent=0cm, labelsep=1ex, leftmargin=1cm]
	\item If $2r \le \rho \le \rho_{0}$, then
\begin{eqnarray*}
	\frac{1}{2} \Abs{\dop\!u\bigl( \rho e^{i\th} \bigr)}^{2}_{J} \le \frac{36\,C\,e^{2\mu}}{\rho^{2}} \cdot \left( \frac{r}{\rho} \right)^{2\mu} \cdot E_{J}(u) + C.
\end{eqnarray*}
	\item If $\rho_{0} \le \rho \le R/2$, then
\begin{eqnarray*}
	\frac{1}{2} \Abs{\dop\!u\bigl( \rho e^{i\th} \bigr)}^{2}_{J} \le \frac{36\,C\,e^{2\mu}}{\rho^{2}} \cdot \left( \frac{\rho}{R} \right)^{2\mu} \cdot E_{J}(u) + C.
\end{eqnarray*}
\end{enumerate}
\end{claim*}

\begin{proof}[Proof of Claim]
	First of all, note that the assumption $R/r \ge 4 e^{2}$ yields the finer estimate $2r \le 2e \, r \le \rho_{0} \le R/2e \le R/2$.

In order to prove (i) assume that $2r \le \rho \le \rho_{0}$.  We may then distinguish two cases.

\smallskip

\fall{1}{$2r \le \rho \le 2e \, r$} Then $B_{\rho/2}(\rho e^{i\th}) \subset A(r,R)$. Since $E_{J}(u) < \de_{0} \le \de$ by the assumption of the lemma and by (\ref{InequalityAssumptionOnDelta_0}), the mean value inequality (\ref{InequalityMeanValueInequalityAnnulusLemma}) yields
\[
\frac{1}{2} \Abs{\dop\!u\bigl( \rho e^{i\th} \bigr)}^{2}_{J} \le \frac{4\,C}{\rho^{2}} \cdot \left( \frac{\rho}{r} \right)^{2\mu} \cdot \left( \frac{r}{\rho} \right)^{2\mu} \cdot E_{J}(u) + C \le \frac{16\,C\,e^{2\mu}}{\rho^{2}} \cdot \left( \frac{r}{\rho} \right)^{2\mu} \cdot E_{J}(u) + C.
\]
In the second inequality we used that $\rho/r \le 2e$ and hence $(\rho/r)^{2\mu} \le 4e^{2\mu}$.

\smallskip

\fall{2}{$2e \, r \le \rho \le \rho_{0}$} Then $B_{\rho/2}(\rho e^{i\th}) \subset A(\rho/e,e\rho) \subset A(r,R)$. Again inequality (\ref{InequalityMeanValueInequalityAnnulusLemma}) yields
\[
\frac{1}{2} \Abs{\dop\!u\bigl( \rho e^{i\th} \bigr)}^{2}_{J} \le \frac{4\,C}{\rho^{2}} \cdot E_{J}\bigl( u; A(\rho/e,e\rho) \bigr) + C.
\]
Applying inequality~(\ref{InequalityFirstInequalityAnnulusLemma}) to the annulus $A(e^{\log(\rho/r)-1} \cdot r, e^{-\log(\rho/r)+1} \cdot R) \supset A(\rho/e,e\rho)$ we get
\begin{multline*}
	E_{J}\bigl( u; A(\rho/e,e\rho) \bigr) \le E_{J}\Bigl( u;A\bigl( e^{\log(\rho/r)-1} \cdot r, e^{-\log(\rho/r)+1} \cdot R \bigr) \Bigr) \\ \le e^{2\mu} \cdot e^{-2\mu (\log(\rho/r)-1)} \cdot E_{J}(u) \le 9 \, e^{2\mu} \cdot \left( \frac{r}{\rho} \right)^{2\mu} \cdot E_{J}(u).
\end{multline*}
Hence
\[
\frac{1}{2} \Abs{\dop\!u\bigl( \rho e^{i\th} \bigr)}^{2}_{J} \le \frac{36\,C\,e^{2\mu}}{\rho^{2}} \cdot \left( \frac{r}{\rho} \right)^{2\mu} \cdot E_{J}(u) + C.
\]
This proves (i). The proof of (ii) is similar and will therefore be omitted.
\end{proof}

\medskip

Continuing with the proof of inequality (\ref{InequalitySecondEstimateAnnulusLemma}), fix $T$ such that $\log2 \le T \le \log\sqrt{R/r}$. Note that this implies $2r \le e^{T}r \le \rho_{0} \le e^{-T}R \le R/2$. Suppose that $z_{1},z_{2}\in A(e^{T}r,e^{-T}R)$. Then we have
\begin{eqnarray} \label{InequalityLengthEstimateAnnulusLemma}
	\dop_{J}\bigl( u(z_{1}),u(z_{2}) \bigr) \le \dop_{J}\bigl( u(z_{1}),u(\rho_{0}) \bigr) + \dop_{J}\bigl( u(\rho_{0})),u(z_{2}) \bigr).
\end{eqnarray}
In order to estimate the terms on the right-hand side of this inequality we write $z_{j} = \rho_{j} e^{i \th_{j}}$, $j=1,2$, with $e^{T}r \le \rho_{j} \le e^{-T}R$ and $\th_{j}\in[0,2\pi]$. We distinguish two cases.

\smallskip

\fall{1}{$e^{T}r \le \rho_{j} \le \rho_{0}$} To start with we estimate
\[
\dop_{J}\bigl( u(z_{j}),u(\rho_{0}) \bigr) \le \int_{\rho_{j}}^{\rho_{0}} \Abs{\del_{\rho}\!u(\rho)}_{J} \dop\!\rho \,+\, \int_{0}^{\th_{j}} \Abs{\del_{\th}\!u\bigl( \rho_{j}e^{i \th} \bigr)}_{J} \dop\!\th.
\]
Note that
\[
\Abs{\del_{\rho}\!u(\rho)}_{J} = \frac{1}{\sqrt{2}} \Abs{\dop\!u(\rho)}_{J} \quad \text{and} \quad \Abs{\del_{\th}\!u\bigl( \rho_{j}e^{i \th} \bigr)}_{J} = \frac{\rho_{j}}{\sqrt{2}} \cdot \Abs{\dop\!u\bigl( \rho_{j}e^{i \th} \bigr)}_{J}.
\]
Furthermore, since $\mu < 1$ we have $e^{2\mu} \le 9$, so it follows from assertion (i) of the Claim above that
\[
\frac{1}{\sqrt{2}} \Abs{\dop\!u\bigl( \rho e^{i\th} \bigr)}_{J} \le 18\,\sqrt{C} \cdot \frac{r^{\mu}}{\rho^{\,\mu+1}} \cdot \sqrt{E_{J}(u)} + \sqrt{C}
\]
for $2r \le \rho \le \rho_{0}$. Hence we obtain
\begin{multline*}
	\dop_{J}\bigl( u(z_{j}),u(\rho_{0}) \bigr) \le 18 \, \sqrt{C} \cdot \left( \int_{\rho_{j}}^{\rho_{0}} \frac{r^{\mu}}{\rho^{\,\mu+1}} \dop\!\rho + \int_{0}^{\th_{j}} \left( \frac{r}{\rho_{j}} \right)^{\mu} \dop\!\th \right) \cdot \sqrt{E_{J}(u)} \\ + \bigl( \rho_{0} - \rho_{j} + \rho_{j}\th_{j} \bigr) \cdot \sqrt{C}.
\end{multline*}
Using $2r \le e^{T}r \le \rho_{j} \le \rho_{0} \le R/2$ we may estimate the terms on the right-hand side by
\[
\int_{\rho_{j}}^{\rho_{0}} \frac{r^{\mu}}{\rho^{\,\mu+1}} \dop\!\rho \le \int_{e^{T}r}^{\rho_{0}} \frac{r^{\mu}}{\rho^{\,\mu+1}} \dop\!\rho = -\frac{1}{\mu} \cdot \left( \frac{r}{\rho_{0}} \right)^{\mu} + \frac{r^{\mu}}{\mu \cdot (e^{T}r)^{\,\mu}} \le \frac{1}{\mu} \cdot e^{-\mu T}
\]
and
\[
\int_{0}^{\th_{j}} \left( \frac{r}{\rho_{j}} \right)^{\mu} \dop\!\th \le \int_{0}^{2\pi} \left( \frac{r}{e^{T}r} \right)^{\mu} \dop\!\th = 2\pi \cdot e^{-\mu T}
\]
and
\[
\rho_{0} - \rho_{j} + \rho_{j}\th_{j} \le (1 + 2\pi) \, R,
\]
which finally yields
\begin{equation} \label{InequalityFirstTermSecondPartOfAnnulusLemma}
	\dop_{J}\bigl( u(z_{j}),u(\rho_{0}) \bigr) \le 18\,\sqrt{C} \left( \frac{1}{\mu} + 2\pi \right) \cdot e^{-\mu T} \cdot \sqrt{E_{J}(u)} + \sqrt{C} \, (1 + 2\pi) \, R.
\end{equation}

\smallskip

\fall{2}{$\rho_{0} \le \rho_{j} \le e^{-T}R$} A similar argument, which uses assertion (ii) of the Claim above, leads to the same estimate as in (\ref{InequalityFirstTermSecondPartOfAnnulusLemma}).

\smallskip

Plugging inequality (\ref{InequalityFirstTermSecondPartOfAnnulusLemma}) into estimate (\ref{InequalityLengthEstimateAnnulusLemma}), inequality (\ref{InequalitySecondEstimateAnnulusLemma}) follows. This completes the proof of Lemma \ref{LemmaAnnulusLemma}.
\end{proof}

Note that Lemma~\ref{LemmaAnnulusLemma} differs from \cite[Lemma 4.7.3]{McDuff/J-holomorphic-curves-and-symplectic-topology}. In particular, the constant $\de_{0}$ appearing in Lemma~\ref{LemmaAnnulusLemma} does not depend on the almost complex structure $J$. This will be crucial in the subsequent applications.

\smallskip

In order to complete the proof of Proposition~\ref{PropositionBubblesConnect} we shall now modify those parts of the proofs of \cite[Prop.\,4.7.1 and Prop.\,4.7.2]{McDuff/J-holomorphic-curves-and-symplectic-topology} that rely on \cite[Lemma 4.7.3]{McDuff/J-holomorphic-curves-and-symplectic-topology}. We revert to the notation we were using in the proof of Proposition~\ref{PropositionBubblesConnect} before stating Lemma~\ref{LemmaAnnulusLemma}. Throughout we will use without explicit mention that $J_{A_{\nu}}$ converges to $J_{A}$ in $C^{0}$ on $P(\X)$, as follows from formula (\ref{EquationAlmostComplexStructureOnP(X)}) since $A_{\nu}$ converges to $A$ in $C^{0}$.

\smallskip

Let us examine the proof of \cite[Prop.\,4.7.2]{McDuff/J-holomorphic-curves-and-symplectic-topology} first. This proof is based on \cite[Lemma~4.7.4]{McDuff/J-holomorphic-curves-and-symplectic-topology}, and \cite[Lemma 4.7.3]{McDuff/J-holomorphic-curves-and-symplectic-topology} enters via the proof of \cite[Lemma~4.7.4]{McDuff/J-holomorphic-curves-and-symplectic-topology}. We will therefore explain the modifications to the proof of \cite[Lemma~4.7.4]{McDuff/J-holomorphic-curves-and-symplectic-topology} that are required when \cite[Lemma 4.7.3]{McDuff/J-holomorphic-curves-and-symplectic-topology} is replaced by Lemma~\ref{LemmaAnnulusLemma}.

We need to apply Lemma~\ref{LemmaAnnulusLemma} to the $J_{A_{\nu}}$-holomorphic curves
\begin{eqnarray} \label{MapRescaledCurvesOnAnnulus}
	\map{w_{\nu} \deq \utilde_{\nu} \circ \bigl( \rho_{0}^{\nu} \bigr)^{-1}}{A(\de^{\nu}/\rho,\rho)}{P(\X)},
\end{eqnarray}
where $\utilde_{\nu} = u_{\nu} \circ \vphi \circ \phi_{\nu}$ and $\rho_{0}^{\nu}$ denotes the sequence of conformal maps introduced in Step 2 in the proof of \cite[Prop.\,4.7.2]{McDuff/J-holomorphic-curves-and-symplectic-topology}. Before explaining the required modifications to the proof of \cite[Lemma~4.7.4]{McDuff/J-holomorphic-curves-and-symplectic-topology} we check that the assumptions of Lemma~\ref{LemmaAnnulusLemma} are satisfied.

In fact, we are considering the limit $\rho \to 0$, so we may without loss of generality assume that~$\rho < R_{0}$ for some constant $R_{0}>0$. Moreover, for fixed $\rho$ the ratio $\rho / (\de^{\nu}/\rho)$ gets arbitrarily large since $\de^{\nu} \to 0$. It remains to show that the curves $w_{\nu}$ satisfy a uniform mean value inequality of the following form:

There exist constants $\de^{\prime},C^{\prime}>0$ such that for all $\nu$ the following holds. For all $z\in A(\de^{\nu}/\rho,\rho)$ and all $\rho^{\prime}>0$ such that $B_{\rho^{\prime}}(z) \subset A(\de^{\nu}/\rho,\rho)$,
\begin{equation} \label{InequalityMeanValueInequalityFirstApplicationOfAnnulusLemma}
	E_{J_{A_{\nu}}}\bigl( w_{\nu};B_{\rho^{\prime}}(z) \bigr) < \de^{\prime} \quad \Longrightarrow \quad \frac{1}{2} \Abs{\dop\!w_{\nu}(z)}^{2}_{J_{A_{\nu}}} \le \frac{C^{\prime}}{{\rho^{\prime}}^{2}} \cdot E_{J_{A_{\nu}}}\bigl( w_{\nu};B_{\rho^{\prime}}(z) \bigr) + C^{\prime}.
\end{equation}
This uniform mean value inequality, however, follows from the mean value inequality contained in assumption (d) of Proposition~\ref{PropositionBubblesConnect} by conformal invariance of the energy, by assumption (a) of Proposition~\ref{PropositionBubblesConnect}, and since the sequence of conformal maps $\rho_{0}^{\nu}$ appearing in the definition of $w_{\nu}$ converges uniformly to the identity by Step 2 in the proof of \cite[Prop.\,4.7.2]{McDuff/J-holomorphic-curves-and-symplectic-topology}. Thus Lemma~\ref{LemmaAnnulusLemma} applies to the curves (\ref{MapRescaledCurvesOnAnnulus}).

The modifications to the proof of \cite[Lemma~4.7.4]{McDuff/J-holomorphic-curves-and-symplectic-topology} are as follows. Write
\[
E^{\nu}(\rho) \deq E_{J_{A_{\nu}}}\bigl( w_{\nu}; A(\de^{\nu}/\rho,\rho) \bigr) \quad \text{and} \quad E(\rho) \deq \lim_{\nu \to \infty} E^{\nu}(\rho).
\]
Note that if $2\rho \le r$ then $A(\de^{\nu}/2\rho,2\rho) \subset A(\de^{\nu}/r,r)$. We may now apply Lemma~\ref{LemmaAnnulusLemma} to the curves (\ref{MapRescaledCurvesOnAnnulus}) for some fixed number $0 < \mu < 1$, obtaining constants $\de_{0}^{\prime} \deq \de_{0}^{\prime}(\de^{\prime},C^{\prime},\mu)>0$ and $c^{\prime} \deq c^{\prime}(C^{\prime},\mu)>0$. Here $\de^{\prime}$ and $C^{\prime}$ are the constants from the uniform mean value inequality~(\ref{InequalityMeanValueInequalityFirstApplicationOfAnnulusLemma}). Then by inequality~(\ref{InequalitySecondEstimateAnnulusLemma}) with $T=\log2$ we further obtain
\[
E^{\nu}(2\rho) < \de_{0}^{\prime} \quad \Longrightarrow \quad \sup_{z_{1},z_{2}\in A(\de^{\nu}/\rho,\rho)} \dop_{J_{A_{\nu}}}\bigl( w_{\nu}(z_{1}),w_{\nu}(z_{2}) \bigr) \le c^{\prime} \cdot \left( \sqrt{E^{\nu}(2\rho)} + 2\rho \right).
\]
Taking the limit $\nu\to\infty$ we therefore get
\[
E(2\rho) < \de_{0}^{\prime} \quad \Longrightarrow \quad \dop_{J_{A}}\bigl( \utilde(\rho),v(1/\rho) \bigr) = \lim_{\nu\to\infty} \dop_{J_{A_{\nu}}}\bigl( w_{\nu}(\rho),w_{\nu}(\de^{\nu}/\rho) \bigr) \le c^{\prime} \cdot \left( \sqrt{E(2\rho)} + 2\rho \right).
\]
Letting $\rho\to 0$ we obtain $\utilde(0) = v(\infty)$. The remaining parts of the proof of \cite[Lemma~4.7.4]{McDuff/J-holomorphic-curves-and-symplectic-topology} carry over to our situation without modification.

\smallskip

Lastly, we discuss the proof of \cite[Prop.\,4.7.1]{McDuff/J-holomorphic-curves-and-symplectic-topology}. In Step 3 of this proof we need to apply Lemma~\ref{LemmaAnnulusLemma} to the $J_{A_{\nu}}$-holomorphic curves
\begin{eqnarray} \label{MapCurvesOnAnnulus}
	\map{\utilde_{\nu}}{A(\de^{\nu},\veps^{\nu})}{P(\X)},
\end{eqnarray}
where $\de^{\nu}$ and $\veps^{\nu}$ both converge to zero such that $\de^{\nu}/\veps^{\nu} \to 0$. Again, before explaining the required modifications to this proof we verify that the assumptions of Lemma~\ref{LemmaAnnulusLemma} are satisfied.

Since $\veps^{\nu} \to 0$ we may without loss of generality assume that $\veps^{\nu} < R_{0}$ for some constant~$R_{0}>0$. Moreover, since $\de^{\nu}/\veps^{\nu} \to 0$ the ratio $\veps^{\nu} / \de^{\nu}$ gets arbitrarily large. It remains to show that the curves $\utilde_{\nu}$ satisfy a uniform mean value inequality of the following form:

There exist constants $\de^{\prime},C^{\prime}>0$ such that for all $\nu$ the following holds. For all $z\in A(\de^{\nu},\veps^{\nu})$ and all $\rho^{\prime}>0$ such that $B_{\rho^{\prime}}(z) \subset A(\de^{\nu},\veps^{\nu})$,
\begin{equation} \label{InequalityMeanValueInequalitySecondApplicationOfAnnulusLemma}
	E_{J_{A_{\nu}}}\bigl( \utilde_{\nu};B_{\rho^{\prime}}(z) \bigr) < \de^{\prime} \quad \Longrightarrow \quad \frac{1}{2} \Abs{\dop\!\utilde_{\nu}(z)}^{2}_{J_{A_{\nu}}} \le \frac{C^{\prime}}{{\rho^{\prime}}^{2}} \cdot E_{J_{A_{\nu}}}\bigl( \utilde_{\nu};B_{\rho^{\prime}}(z) \bigr) + C^{\prime}.
\end{equation}
As above, this uniform mean value inequality follows from the mean value inequality contained in assumption (d) of Proposition~\ref{PropositionBubblesConnect} by conformal invariance of the energy and by assumption~(a) of Proposition~\ref{PropositionBubblesConnect}. Thus Lemma~\ref{LemmaAnnulusLemma} applies to the curves (\ref{MapCurvesOnAnnulus}).

The modifications to Step 3 in the proof of \cite[Prop.\,4.7.1]{McDuff/J-holomorphic-curves-and-symplectic-topology} are as follows. We apply Lemma~\ref{LemmaAnnulusLemma} to the curves (\ref{MapCurvesOnAnnulus}) for $\mu \deq 1/2$, obtaining constants $\de_{0}^{\prime} \deq \de_{0}^{\prime}(\de^{\prime},C^{\prime},\mu)>0$ and $c^{\prime} \deq c^{\prime}(C^{\prime},\mu)>0$. Here $\de^{\prime}$ and $C^{\prime}$ are the constants from the uniform mean value inequality (\ref{InequalityMeanValueInequalitySecondApplicationOfAnnulusLemma}). By inequality~(\ref{InequalityFirstEstimateAnnulusLemma}) we finally get
\[
E_{J_{A_{\nu}}}\bigl( \utilde_{\nu}; A\bigl( e^{T}\de^{\nu},e^{-T}\veps^{\nu} \bigr) \bigr) \le c^{\prime} \cdot e^{-T} \cdot E_{J_{A_{\nu}}}\bigl( \utilde_{\nu}; A(\de^{\nu},\veps^{\nu}) \bigr).
\]
The remaining parts of the proof of \cite[Prop.\,4.7.1]{McDuff/J-holomorphic-curves-and-symplectic-topology} then carry over to our situation without modification.

\smallskip

This finishes the proof of Proposition~\ref{PropositionBubblesConnect}.

\subsection{Proof of Gromov compactness}
\label{SubSectionProofOfGromovCompactness}

We are now ready to prove Theorem~\ref{TheoremGromovCompactness}. Our strategy is to adapt the proof of \cite[Thm.\,5.3.1]{McDuff/J-holomorphic-curves-and-symplectic-topology} on Gromov compactness for pseudoholomorphic curves, replacing the statements of \cite[Thm.\,4.6.1, Prop.\,4.7.1 and Prop.\,4.7.2]{McDuff/J-holomorphic-curves-and-symplectic-topology} with the corresponding statements of Theorem~\ref{TheoremConvergenceModuloBubbling} and Proposition~\ref{PropositionBubblesConnect}.

\smallskip

Fix a nonnegative integer $n$, a $G$-invariant $\om$-compatible almost complex structure~$J$ on $\X$, and a complex structure $j_{\Sig}$ and an area form $\dvol_{\Sig}$ on~$\Sig$. Consider a sequence $(A_{\nu},u_{\nu},\textbf{z}_{\nu})$ of $n$-marked vortices whose Yang-Mills-Higgs energy satisfies a uniform bound
\[
\sup_{\nu} E(A_{\nu},u_{\nu}) < \infty.
\]
Our goal is to construct a rooted $n$-labeled tree $T=(V=\{\rv\} \sqcup V_{S},E,\Lam)$ and a polystable vortex
\[
\big( \sv \big) = \big( (A,u_{\rv}),\{u_{\al}\}_{\al\,\in\,V_{S}},\{z_{\al\be}\}_{\edge{\al}{\be}},\{\al_{i},z_{i}\}_{1 \le i \le n} \big)
\]
of combinatorial type $T$ such that the sequence $(A_{\nu},u_{\nu},\textbf{z}_{\nu})$ Gromov converges to $(\sv)$ in the sense of Definition~\ref{DefinitionGromovConvergence}.

\smallskip

We shall proceed in eight steps.

\medskip

\schritt{1} We fix a root vertex $0$ and assign to it the principal component $\pc \deq \Sig$. In this way,~$\pc$ inherits a fixed complex structure $j_{\pc} \deq j_{\Sig}$ and a fixed area form $\dvol_{\pc} \deq \dvol_{\Sig}$, with corresponding K\"ahler metric $\l\cdot,\cdot\r_{\pc} \deq \dvol_{\pc}(\,\cdot, j_{\pc}\,\cdot)$.

\medskip

\schritt{2} We apply Theorem~\ref{TheoremConvergenceModuloBubbling} to the sequence of vortices $(A_{\nu},u_{\nu})$. The conclusion is that there exists a smooth vortex $(A,u_{\rv})$, a sequence of smooth gauge transformations $g_{\nu} \in \G(P)$, a real number $p>2$, and a finite set $Z_{\rv} = \{\zeta_{1},\ldots,\zeta_{N}\}$ of distinct points on $\pc$ such that, after passing to a subsequence,
\begin{enumerate}[itemsep=0.3ex, itemindent=0cm, labelsep=1ex, leftmargin=1.1cm]
	\item the sequence $g^{\ast}_{\nu} A_{\nu}$ converges to $A$ weakly in $W^{1,p}$ and strongly in $C^{0}$ on $\pc$;
	\item the sequence $g^{-1}_{\nu} u_{\nu}$ converges to $u_{\rv}$ in $C^{\infty}$ on compact subsets of $\pc \setminus Z_{\rv}$;
	\item for every $j\in\{1,\ldots,N\}$ and every $\veps>0$ such that $B_{\veps}(\zeta_{j}) \cap Z_{\rv} = \{\zeta_{j}\}$ the limit
\[
m_{\veps}(\zeta_{j}) \deq \lim_{\nu\to\infty} E\bigl( g_{\nu}^{\ast} A_{\nu},g_{\nu}^{-1} u_{\nu};B_{\veps}(\zeta_{j}) \bigr)
\]
exists and is a continuous function of $\veps$, and
\[
m(\zeta_{j}) \deq \lim_{\veps\to 0}m_{\veps}(\zeta_{j}) \ge \hbar,
\]
where $\hbar$ is the constant of Corollary \ref{CorollaryAPrioriEstimateOnSurface};
	\item for every compact subset $K \subset \pc$ such that $Z_{\rv}$ is contained in the interior of $K$,
\[
E\bigl( A,u_{\rv};K \bigr) + \sum_{j=1}^{N} m(\zeta_{j}) = \lim_{\nu\to\infty} E\bigl( g_{\nu}^{\ast} A_{\nu},g_{\nu}^{-1} u_{\nu};K \bigr).
\]
\end{enumerate}

\medskip

\schritt{3} As described in Section \ref{SubSectionVorticesVersusPseudoholomorphicMaps}, the connection $A$ gives rise to a symplectic form~$\om_{A}$ and an almost complex structure $J_{A}$ on the total space of the bundle $P(\X) = P \xop_{G} \X$ over $\pc$, defined by formulas (\ref{EquationSymplecticFormOnP(X)}) and (\ref{EquationAlmostComplexStructureOnP(X)}), respectively. By Proposition~\ref{PropositionVorticesAsPseudoholomorphicMaps}\,(i), taking $A$ as reference connection, the almost complex structure $J_{A}$ is tamed by $\om_{A}$. In particular, we have a Riemannian metric
\[
\l\cdot\,,\cdot\r_{J_{A}} \deq \frac{1}{2} \bigl( \om_{A}(\cdot\,,J_{A}\,\cdot) - \om_{A}(J_{A}\,\cdot,\,\cdot) \bigr)
\]
on $P(\X)$ determined by $\om_{A}$ and $J_{A}$. For later use, we recall from \cite[Sec.\,2.2]{McDuff/J-holomorphic-curves-and-symplectic-topology} that the energy of any $J_{A}$-holomorphic section $\map{u}{\pc}{P(\X)}$ is then given by
\[
E_{J_{A}}(u) \deq \frac{1}{2} \int_{\pc} \Abs{\dop\!u}_{J_{A}}^{2} \, \dvol_{\pc},
\]
where the norm $\Abs{\dop\!u}_{J_{A}}$ is understood with respect to the metric $\l\cdot\,,\cdot\r_{J_{A}}$ on the bundle $P(\X)$ and the metric~$\l\cdot\,,\cdot\r_{\pc}$ on $\pc$.

As in Remark \ref{RemarkMapVersusSection}, we regard the map $\map{u_{\rv}}{P}{\X}$ from Step 2 as a section $\map{u_{\rv}}{\pc}{P(\X)}$. Since $(A,u_{\rv})$ is a vortex, it follows from Lemma \ref{LemmaVorticesAsPseudoholomorphicMaps} that this section~$u_{\rv}$ is $J_{A}$-holomorphic.

\medskip

\schritt{4} By assertion (i) in Step 2, after passing to a subsequence, Proposition~\ref{PropositionVorticesAsPseudoholomorphicMaps} applies to the sequence of vortices $(g_{\nu}^{\ast}A_{\nu},g_{\nu}^{-1}u_{\nu})$.

More precisely, the connections $g_{\nu}^{\ast}A_{\nu}$ give rise to almost complex structures $J_{\nu} \deq J_{g_{\nu}^{\ast}A_{\nu}}$ on the bundle $P(\X)$, defined by formula (\ref{EquationAlmostComplexStructureOnP(X)}), which by Proposition~\ref{PropositionVorticesAsPseudoholomorphicMaps}\,(i) are all tamed by the symplectic form $\om_{A}$. Moreover, we see from formula (\ref{EquationAlmostComplexStructureOnP(X)}) that $J_{\nu}$ converges to~$J_{A}$ in $C^{0}$. As before, we have Riemannian metrics
\[
\l\cdot\,,\cdot\r_{J_{A_{\nu}}} \deq \frac{1}{2} \bigl( \om_{A}(\cdot\,,J_{\nu}\,\cdot) - \om_{A}(J_{\nu}\,\cdot,\,\cdot) \bigr)
\]
on $P(\X)$ determined by $\om_{A}$ and $J_{\nu}$, which converge to the metric $\l\cdot\,,\cdot\r_{J_{A}}$ in~$C^{0}$. The energy of any $J_{\nu}$-holomorphic section $\map{u}{\pc}{P(\X)}$ is then given by
\[
E_{J_{\nu}}(u) \deq \frac{1}{2} \int_{\pc} \Abs{\dop\!u}_{J_{\nu}}^{2} \, \dvol_{\pc},
\]
where the norm $\Abs{\dop\!u}_{J_{\nu}}$ is understood with respect to the metric $\l\cdot\,,\cdot\r_{J_{\nu}}$ on~$P(\X)$ and the metric~$\l\cdot\,,\cdot\r_{\pc}$ on $\pc$.

As in Remark \ref{RemarkMapVersusSection}, we regard the maps $\map{g_{\nu}^{-1}u_{\nu}}{P}{\X}$ as sections of~$P(\X)$. By Lemma \ref{LemmaVorticesAsPseudoholomorphicMaps} it follows that
\begin{enumerate}[topsep=1ex, itemsep=0.3ex, itemindent=0cm, labelsep=1ex, leftmargin=1.1cm]
	\item[(v)] the sections $\map{g_{\nu}^{-1}u_{\nu}}{\pc}{P(\X)}$ are $J_{\nu}$-holomorphic.
\end{enumerate}
Furthermore, Proposition~\ref{PropositionVorticesAsPseudoholomorphicMaps}\,(ii--iii) implies that
\begin{enumerate}[topsep=1ex, itemsep=0.3ex, itemindent=0cm, labelsep=1ex, leftmargin=1.1cm]
	\item[(vi)] the energy of the sections $g_{\nu}^{-1}u_{\nu}$ satisfies a uniform bound
\[
\sup_{\nu} E_{J_{\nu}}\bigl( g_{\nu}^{-1}u_{\nu} \bigr) < \infty;
\]
	\item[(vii)] there exist constants $r_{0} > 0$ and $\de,C>0$ such that for every~$\nu$ the section $g_{\nu}^{-1}u_{\nu}$ satisfies a mean value inequality of the following form: For all $z_{0} \in \pc$ and all $0 < r < r_{0}$,
\[
E_{J_{\nu}} \bigl( g_{\nu}^{-1}u_{\nu};B_{r}(z_{0}) \bigr) < \de \quad \Longrightarrow \quad \Abs{\dop\bigl( g_{\nu}^{-1}u_{\nu} \bigr)(z_{0})}_{J_{\nu}}^{2} \le \frac{C}{r^{2}} \cdot E_{J_{\nu}}\bigl( g_{\nu}^{-1}u_{\nu};B_{r}(z_{0}) \bigr) + C.
\]
\end{enumerate}
Note that in order to obtain (vi) we use that $\sup_{\nu} E(A_{\nu},u_{\nu}) < \infty$ by assumption, in combination with gauge invariance of the Yang-Mills-Higgs energy.

\medskip

\schritt{5} We rephrase assertions (iii) and (iv) in Step 2 in terms of the energy of the sections~$g_{\nu}^{-1}u_{\nu}$. More precisely, we claim that
\begin{enumerate}[topsep=1ex, itemsep=0.3ex, itemindent=0cm, labelsep=1ex, leftmargin=1.1cm]
	\item[(iii')] for every $j\in\{1,\ldots,N\}$ and every $\veps>0$ such that $B_{\veps}(\zeta_{j}) \cap Z_{\rv} = \{\zeta_{j}\}$ the limit
\[
m_{\veps}^{\prime}(\zeta_{j}) \deq \lim_{\nu\to\infty} E_{J_{\nu}}\bigl( g_{\nu}^{-1} u_{\nu};B_{\veps}(\zeta_{j}) \bigr)
\]
exists and is a continuous function of $\veps$, and
\[
m^{\prime}(\zeta_{j}) \deq \lim_{\veps\to 0} m_{\veps}^{\prime}(\zeta_{j}) \ge \hbar;
\]
	\item[(iv')] for every compact subset $K \subset \pc$ such that $Z_{\rv}$ is contained in the interior of $K$,
\[
E\bigl( A,u_{\rv};K \bigr) + \sum_{j=1}^{N} m^{\prime}(\zeta_{j}) = \lim_{\nu\to\infty} E\bigl( g_{\nu}^{\ast} A_{\nu},g_{\nu}^{-1} u_{\nu};K \bigr).
\]
\end{enumerate}
To see this, we first recall from (vi) in Step 4 that
\[
\sup_{\nu} E_{J_{\nu}}\bigl( g_{\nu}^{-1} u_{\nu};B_{\veps}(\zeta_{j}) \bigr) < \infty.
\]
As in the proof of \cite[Thm.\,4.6.1]{McDuff/J-holomorphic-curves-and-symplectic-topology} we conclude that, after passing to a subsequence, the limit~$m_{\veps}^{\prime}(z_{j})$ exists; moreover, it is a continuous function of $\veps$ for every~$j$ by (ii) in Step 2. Assertions (iii') and (iv') then follow from (iii) and (iv) in Step~2 once we show that
\begin{eqnarray} \label{EquationClaimStep5}
	m^{\prime}(\zeta_{j}) = m(\zeta_{j})
\end{eqnarray}
for every $j$. To prove this, we abbreviate $\Ahat_{\nu} \deq g_{\nu}^{\ast}A_{\nu}$ and $\uhat_{\nu} \deq g_{\nu}^{-1}u_{\nu}$. Recall from Step 4 that the energy of the section $\map{\uhat_{\nu}}{\pc}{P(\X)}$ on $B_{\veps}(\zeta_{j})$ is given by
\[
E_{J_{\nu}}\bigl( \uhat_{\nu};B_{\veps}(\zeta_{j}) \bigr) = \frac{1}{2} \int_{B_{\veps}(\zeta_{j})} \Abs{\dop\!\uhat_{\nu}}^{2}_{J_{\nu}} \dvol_{\pc},
\]
and recall from formula (\ref{EquationYMHEnergyGlobal}) that the Yang-Mills-Higgs energy of the vortex $(\Ahat_{\nu},\uhat_{\nu})$ on $B_{\veps}(\zeta_{j})$ is given by
\[
E\bigl( \Ahat_{\nu},\uhat_{\nu};B_{\veps}(\zeta_{j}) \bigr) = \frac{1}{2} \int_{B_{\veps}(\zeta_{j})} \Abs{\dop_{\Ahat_{\nu}}\!\uhat_{\nu}}^{2}_{J} \dvol_{\pc} + \int_{B_{\veps}(\zeta_{j})} \Abs{\mu(\uhat_{\nu})}^{2} \dvol_{\pc}.
\]
We may hence estimate
\begin{multline} \label{InequalityEstimateStep5}
	\Abs{ m_{\veps}^{\prime}(\zeta_{j}) - m_{\veps}(\zeta_{j}) } \le \frac{1}{2} \int_{B_{\veps}(\zeta_{j})} \lim_{\nu\to\infty} \left( \Abs{ \Abs{\dop\!\uhat_{\nu}}^{2}_{J_{\nu}} - \Abs{\dop_{\Ahat_{\nu}}\!\uhat_{\nu}}^{2}_{J} } \right) \dvol_{\pc} \\ + \norm{\mu}_{C^{0}(M)}^{2} \cdot \volume(B_{\veps}(\zeta_{j})).
\end{multline}
A computation as in the proof of part (i) of Lemma~\ref{LemmaTamingOnE} then yields
\begin{multline*}
	\Abs{\dop\!\uhat_{\nu}(v)}^{2}_{J_{\nu}} - \Abs{\dop_{\Ahat_{\nu}}\!\uhat_{\nu}(v)}_{J}^{2} = \om\bigl( X_{(\Ahat_{\nu}-A)(v)},J \dop_{\Ahat_{\nu}}\!\uhat_{\nu}(v) \bigr) \\ +\, \om\bigl( \dop_{\Ahat_{\nu}}\!\uhat_{\nu}(v),X_{(\Ahat_{\nu}-A)(j_{\pc} v)} \bigr) + \om\bigl( X_{(\Ahat_{\nu}-A)(v)},X_{(\Ahat_{\nu}-A)(j_{\pc} v)} \bigr) \\ -\, \bigl\l F_{A}(v,j_{\pc} v),\mu \bigr\r + (1 + c_{A,\mu}) \cdot \Abs{v}^{2}
\end{multline*}
for all $v\in T\pc$. Recall that
\[
\Abs{\l F_{A}(v,j_{\pc} v),\mu \r_{\gfr}} \le c_{A,\mu} \cdot \Abs{v}^{2}
\]
by definition of $c_{A,\mu}$ in (\ref{EstimateForTwoFormOnP(X)}), and that $\Ahat_{\nu}$ converges to $A$ in $C^{0}$ on $\pc$ by (i) in Step 2. It thus follows that
\[
\lim_{\nu\to\infty} \left( \Abs{ \Abs{\dop\!\uhat_{\nu}}^{2}_{J_{\nu}} - \Abs{\dop_{\Ahat_{\nu}}\!\uhat_{\nu}}^{2}_{J} } \right) \dvol_{\pc} \le c \, (1 + 2\,c_{A,\mu})
\]
for some constant $c>0$. Plugging this into inequality (\ref{InequalityEstimateStep5}) above we get
\[
\Abs{ m_{\veps}^{\prime}(\zeta_{j}) - m_{\veps}(\zeta_{j}) } \le \left( \frac{c}{2}\,(1+2\,c_{A,\mu}) + \norm{\mu}_{C^{0}(M)}^{2} \right) \cdot \volume(B_{\veps}(\zeta_{j})).
\]
Letting $\veps\to 0$, (\ref{EquationClaimStep5}) follows. This completes the proof of (iii') and (iv').

\medskip

\schritt{6} We prove that the sequence of $n$-marked $J_{\nu}$-holomorphic sections $(g_{\nu}^{-1}u_{\nu},\textbf{z}_{\nu})$ Gromov converges to a stable map
\[
(\textbf{u},\textbf{z}) = \bigl( u_{\rv},\{u_{\al}\}_{\al\,\in\,V_{S}},\{z_{\al\be}\}_{\edge{\al}{\be}}, \{ \al_{i},z_{i} \}_{1 \le i \le n} \bigr)
\]
in $P(\X)$ of combinatorial type $T=(V=\{\rv\} \sqcup V_{S},E,\Lam)$ in the sense of \cite[Def.\,5.2.1]{McDuff/J-holomorphic-curves-and-symplectic-topology}, where~$T$ is an $n$-labeled tree, with the modifications that
\begin{itemize}[itemsep=0.3ex, itemindent=0cm, labelsep=1ex, leftmargin=0.7cm]
	\item $\pc$ is of arbitrary genus but does not admit any automorphisms other than the identity;
	\item the (Energy) axiom will be formulated in a different way in Step 8 below.
\end{itemize}
The proof is basically the same as that of \cite[Thm.\,5.3.1]{McDuff/J-holomorphic-curves-and-symplectic-topology}, which is in turn based on \cite[Thm.\,4.6.1, Prop.\,4.7.1 and Prop.\,4.7.2]{McDuff/J-holomorphic-curves-and-symplectic-topology}. Except for certain alterations to be discussed below, the arguments from the proof of \cite[Thm.\,5.3.1]{McDuff/J-holomorphic-curves-and-symplectic-topology} will hence carry over to our situation if we replace the assertions of \cite[Thm.\,4.6.1]{McDuff/J-holomorphic-curves-and-symplectic-topology} with the corresponding assertions (ii), (iii') and (iv') in Step 2 and Step 5 above, and the assertions of \cite[Prop.\,4.7.1 and Prop.\,4.7.2]{McDuff/J-holomorphic-curves-and-symplectic-topology} with the respective assertions of Proposition~\ref{PropositionBubblesConnect}.

More precisely, we do not admit non-trivial automorphisms of the principal component $\pc$; hence $\pc$ will be a distinguished component of the stable map~$\textbf{u}$. We therefore need to apply an induction argument as in the proof of \cite[Thm.\,5.3.1]{McDuff/J-holomorphic-curves-and-symplectic-topology} in order to construct a separate bubble tree at each point $\zeta_{j} \in Z_{0}$, where $Z_{0}$ is the set of singular points obtained in Step 2. Technically, this is achieved by modifying the base step in the induction in the proof of \cite[Thm.\,5.3.1]{McDuff/J-holomorphic-curves-and-symplectic-topology} in the following way. We define the set $Z_{1}$ in that proof to be the set $Z_{\rv}$. Then there exists $r>0$ sufficiently small such that for every $j$ there exists a holomorphic chart
\[
\map{\vphi_{j}}{B}{B_{r}(\zeta_{j})},
\]
where $B \subset \C$ denotes the closed unit disk, such that
\[
\vphi_{j}(0)=\zeta_{j} \quad \text{and} \quad B_{r}(\zeta_{j}) \cap Z_{1} = \{\zeta_{j}\}.
\]
Moreover, we define the sequence of M\"obius transformations $\phi^{\nu}_{1}$ in the proof of \cite[Thm.\,5.3.1]{McDuff/J-holomorphic-curves-and-symplectic-topology} to be trivial, that is, $\phi^{\nu}_{1} \deq \id_{\C}$ for all $\nu$. Then it follows from (ii) in Step 2 that the sequence
\begin{eqnarray} \label{MapLocalSequenceGromovConvergence}
	\map{\bigl( g_{\nu}^{-1}u_{\nu} \bigr) \circ \vphi_{j} = \bigl( g_{\nu}^{-1}u_{\nu} \bigr) \circ \vphi_{j} \circ \phi^{\nu}_{1}}{B}{P(\X)}
\end{eqnarray}
converges to
\[
\map{u_{\rv} \circ \vphi_{j}}{B}{P(\X)}
\]
in $C^{\infty}$ on compact subsets of the punctured disk $B \setminus \{0\}$ for every~$j$. We may then apply Proposition~\ref{PropositionBubblesConnect} to the sequence (\ref{MapLocalSequenceGromovConvergence}).

We need to check that assumptions (a--d) of Proposition~\ref{PropositionBubblesConnect} are satisfied. In fact, (a) is satisfied by construction of $\phi^{\nu}_{1}$, (b) is satisfied by (ii) in Step~2, (c) is satisfied by (iii') in Step~5, and (d) is satisfied by (vii) in Step 4.

The induction then carries on as in the proof of \cite[Thm.\,5.3.1]{McDuff/J-holomorphic-curves-and-symplectic-topology}. Note the following: By construction, the rescalings $\phi_{j}^{\nu}$ from that proof all satisfy assumption (a) of Proposition~\ref{PropositionBubblesConnect}. Moreover, part (ii) of Proposition~\ref{PropositionBubblesConnect} only asserts $C^{1}$-convergence for the rescaled maps $v_{\nu}$. Hence we only get $C^{1}$-convergence for the rescaled maps $u_{\al}^{\nu}$ appearing in the (Map) axiom in \cite[Def.\,5.2.1]{McDuff/J-holomorphic-curves-and-symplectic-topology}.

\medskip

\schritt{7} We claim that the tuple
\[
\big( \sv \big) = \big( (A,u_{\rv}),\{u_{\al}\}_{\al\,\in\,V_{S}},\{z_{\al\be}\}_{\edge{\al}{\be}},\{\al_{i},z_{i}\}_{1 \le i \le n} \big)
\]
consisting of the vortex $(A,u_{\rv})$ obtained in Step 2 and the stable map $(\textbf{u},\textbf{z})$ obtained in Step~6, is a polystable vortex in the sense of Definition \ref{DefinitionPolystableVortex}. In fact, this follows from \cite[Def.\,5.1.1]{McDuff/J-holomorphic-curves-and-symplectic-topology} since by Proposition~\ref{PropositionBubblesConnect} the bubbles $\map{u_{\al}}{\proj{1}}{P(\X)}$, $\al \in V_{S}$, all map into the fiber $P(\X)_{z_{\rv\al}}$ of the bundle $P(\X)$ over the nodal point $z_{\rv\al} \in \pc$. At this point, recall from Section~\ref{SectionIntroductionAndMainResults} that $z_{\rv\al}$ denotes the nodal point on the principal component $\pc$ at which the bubble tree containing the spherical component $\bub$ is attached.

\medskip

\schritt{8} Combining (i) and (ii) in Step 2 with Step 6 above, we see that the sequence of marked vortices $(\vnu)$ Gromov converges against the polystable vortex $(\sv)$ in the sense of Definition~\ref{DefinitionGromovConvergence}, except for the (Energy) axiom.

It remains to check that the (Energy) axiom in Definition~\ref{DefinitionGromovConvergence} is satisfied. In fact, since the sequence $(g_{\nu}^{-1}u_{\nu},\textbf{z}_{\nu})$ Gromov converges against the stable map $(\textbf{u},\textbf{z})$ by Step 6, it follows from the (Energy) axiom in \cite[Def.\,5.2.1]{McDuff/J-holomorphic-curves-and-symplectic-topology} that
\[
\sum_{\ga\,\in\,T_{\rv\al}} E_{J_{A}}(u_{\ga}) = \lim_{\veps\to 0} \lim_{\nu\to\infty} E_{J_{\nu}}\bigl( g_{\nu}^{-1}u_{\nu};B_{\veps}(z_{\rv\al}) \bigr)
\]
for every $\al\in V_{S}$ such that $\edge{\rv}{\al}$. Here $T_{\rv\al}$ denotes the subtree of $T$ containing $\al$ after removing the edge connecting $\rv$ and $\al$ as in \cite[Sec.\,5.1]{McDuff/J-holomorphic-curves-and-symplectic-topology}. Note that we used $u^{\nu}_{\rv} = (g_{\nu}^{-1}u_{\nu}) \circ \phi^{\nu}_{\rv} = g_{\nu}^{-1}u_{\nu}$ by Step 6. On the other hand, taking $K = \pc$ it follows from (iv') in Step 5 that
\[
E(A,u_{\rv}) + \sum_{\al\in V_{S},\edge{\rv}{\al}} \lim_{\veps\to 0} \lim_{\nu\to\infty} E_{J_{\nu}}\bigl( g_{\nu}^{-1}u_{\nu};B_{\veps}(z_{\rv\al}) \bigr) = \lim_{\nu\to\infty} E\bigl( g_{\nu}^{\ast} A_{\nu},g_{\nu}^{-1} u_{\nu} \bigr).
\]
Hence by gauge invariance of the Yang-Mills-Higgs energy we get
\[
\lim_{\nu\to\infty} E(A_{\nu},u_{\nu}) = E(A,u_{\rv}) + \sum_{\al\in V_{S}} E_{J_{A}}(u_{\al}).
\]
Now we see from the definition of the almost complex structure $J_{A}$ in formula~(\ref{EquationAlmostComplexStructureOnP(X)}) that $J_{A}$ and~$J$ agree on the fibers of $P(\X)$. Since $u_{\al}$ maps into the fiber $P(\X)_{z_{\rv\al}}$ over the point $z_{\rv\al} \in \pc$ by Step 7, it follows that $E_{J_{A}}(u_{\al}) = E_{J}(u_{\al})$. We conclude that
\[
\lim_{\nu\to\infty} E(A_{\nu},u_{\nu}) = E(A,u_{\rv}) + \sum_{\al\,\in\,V_{S}} E_{J}(u_{\al}) = E(A,\textbf{u}).
\]
Hence the (Energy) axiom is satisfied.

\bigskip

The proof of Theorem~\ref{TheoremGromovCompactness} is now complete.

\bibliographystyle{amsplain}

\begin{thebibliography}{10}

\bibitem{Cieliebak/The-symplectic-vortex-equations-and-invariants-of-Hamiltonian-group-actions}
K.~Cieliebak, A.~R. Gaio, I.~{Mundet i Riera}, and D.~A. Salamon, \emph{The
  symplectic vortex equations and invariants of {H}amiltonian group actions},
  J. Symplectic Geom. \textbf{1} (2002), no.~3, 543--645.

\bibitem{Cieliebak/J-holomorphic-curves-moment-maps-and-invariants-of-Hamiltonian-group-actions}
K.~Cieliebak, A.~R. Gaio, and D.~A. Salamon, \emph{{$J$}-holomorphic curves,
  moment maps, and invariants of {H}amiltonian group actions}, Internat. Math.
  Res. Notices (2000), no.~16, 831--882.

\bibitem{Frauenfelder/Floer-homology-of-symplectic-quotients-and-the-Arnold-Givental-conjecture}
U.~Frauenfelder, \emph{Floer homology of symplectic quotients and the
  {A}rnold-{G}ivental conjecture}, Ph.D. thesis, ETH Z\"urich, 2003.

\bibitem{Gaio/Gromov-Witten-invariants-of-symplectic-quotients-and-adiabatic-limits}
A.~R. Gaio and D.~A. Salamon, \emph{Gromov-{W}itten invariants of symplectic
  quotients and adiabatic limits}, J. Symplectic Geom. \textbf{3} (2005),
  no.~1, 55--159.

\bibitem{Gonzalez/Symplectic-vortices-with-fixed-holonomy-at-infinity}
E.~Gonz\'alez, A.~Ott, C.~Woodward, and F.~Ziltener, \emph{Symplectic vortices
  with fixed holonomy at infinity}, Preprint, 2012.

\bibitem{Gonzalez/Area-dependence-in-gauged-Gromov-Witten-theory}
E.~Gonz\'alez and C.~Woodward, \emph{Area dependence in gauged
  {G}romov-{W}itten theory}, {\tt arXiv:0811.3358v2}.

\bibitem{Gromov/Pseudoholomorphic-curves-in-symplectic-manifolds}
M.~Gromov, \emph{Pseudoholomorphic curves in symplectic manifolds}, Invent.
  Math. \textbf{82} (1985), no.~2, 307--347.

\bibitem{Guillemin/Supersymmetry-and-equivariant-de-Rham-theory}
V.~Guillemin and S.~Sternberg, \emph{Supersymmetry and equivariant de {R}ham
  theory}, Mathematics Past and Present, Springer-Verlag, Berlin, 1999.

\bibitem{Hofer/Floer-homology-and-Novikov-rings}
H.~Hofer and D.~A. Salamon, \emph{Floer homology and {N}ovikov rings}, The
  {F}loer memorial volume, Progr. Math., vol. 133, Birkh\"auser, Basel, 1995,
  pp.~483--524.

\bibitem{Hummel/Gromovs-compactness-theorem-for-pseudo-holomorphic-curves}
C.~Hummel, \emph{Gromov's compactness theorem for pseudo-holomorphic curves},
  Progress in Mathematics, vol. 151, Birkh\"auser Verlag, Basel, 1997.

\bibitem{Ivashkovich/Gromov-compactness-theorem-for-J-complex-curves-with-boundary}
S.~Ivashkovich and V.~Shevchishin, \emph{Gromov compactness theorem for
  {$J$}-complex curves with boundary}, Internat. Math. Res. Notices (2000),
  no.~22, 1167--1206.

\bibitem{Kontsevich/Enumeration-of-rational-curves-via-torus-actions}
M.~Kontsevich, \emph{Enumeration of rational curves via torus actions}, The
  moduli space of curves ({T}exel {I}sland, 1994), Progr. Math., vol. 129,
  Birkh\"auser Boston, Boston, MA, 1995, pp.~335--368.

\bibitem{McDuff/J-holomorphic-curves-and-symplectic-topology}
D.~McDuff and D.~A. Salamon, \emph{{$J$}-holomorphic curves and symplectic
  topology}, American Mathematical Society Colloquium Publications, vol.~52,
  American Mathematical Society, Providence, RI, 2004.

\bibitem{Mundet-i-Riera/A-Hitchin-Kobayashi-correspondence-for-Kahler-fibrations}
I.~{Mundet i Riera}, \emph{A {H}itchin-{K}obayashi correspondence for
  {K}\"ahler fibrations}, J. Reine Angew. Math. \textbf{528} (2000), 41--80.

\bibitem{Mundet-i-Riera/Hamiltonian-Gromov-Witten-invariants}
\bysame, \emph{Hamiltonian {G}romov-{W}itten invariants}, Topology \textbf{42}
  (2003), no.~3, 525--553.

\bibitem{Mundet-i-Riera/A-compactification-of-the-moduli-space-of-twisted-holomorphic-maps}
I.~Mundet~i Riera and G.~Tian, \emph{A compactification of the moduli space of
  twisted holomorphic maps}, Adv. Math. \textbf{222} (2009), no.~4, 1117--1196.

\bibitem{Nguyen/Morphisms-of-cohomological-field-theory-algebras-and-quantization-of-the-Kirwan-map}
K.~Nguyen, C.~Woodward, and F.~Ziltener, \emph{Morphisms of cohomological field
  theory algebras and quantization of the {K}irwan map}, Preprint (2011).

\bibitem{Ott/The-non-local-symplectic-vortex-equations-and-gauged-Gromov-Witten-invariants}
A.~Ott, \emph{The non-local symplectic vortex equations and gauged
  {G}romov-{W}itten invariants}, Ph.D. thesis, ETH Z\"urich, 2010.

\bibitem{Parker/Pseudo-holomorphic-maps-and-bubble-trees}
T.~Parker and J.~Wolfson, \emph{Pseudo-holomorphic maps and bubble trees}, J.
  Geom. Anal. \textbf{3} (1993), no.~1, 63--98.

\bibitem{Uhlenbeck/Connections-with-Lp-bounds-on-curvature}
K.~Uhlenbeck, \emph{Connections with {$L\sp{p}$} bounds on curvature}, Comm. Math. Phys. \textbf{83} (1982), no.~1, 31--42.

\bibitem{Wehrheim/Uhlenbeck-compactness}
K.~Wehrheim, \emph{Uhlenbeck compactness}, EMS Series of Lectures in
  Mathematics, European Mathematical Society (EMS), Z\"urich, 2004.

\bibitem{Wehrheim/Energy-quantization-and-mean-value-inequalities-for-nonlinear-boundary-value-problems}
\bysame, \emph{Energy quantization and mean value inequalities for nonlinear
  boundary value problems}, J. Eur. Math. Soc. (JEMS) \textbf{7} (2005), no.~3,
  305--318.

\bibitem{Woodward/Quantum-Kirwan-morphism-and-Gromov-Witten-invariants-of-quotients}
C.~Woodward, \emph{Quantum {K}irwan morphism and {G}romov-{W}itten invariants
  of quotients}, {\tt arXiv:1105.4323v1}.

\bibitem{Woodward/Gauged-Floer-theory-of-toric-moment-fibers}
\bysame, \emph{Gauged {F}loer theory of toric moment fibers}, Geom. Funct.
  Anal. \textbf{21} (2011), no.~3, 680--749.

\bibitem{Ye/Gromovs-compactness-theorem-for-pseudo-holomorphic-curves}
R.~Ye, \emph{Gromov's compactness theorem for pseudo holomorphic curves},
  Trans. Amer. Math. Soc. \textbf{342} (1994), no.~2, 671--694.

\bibitem{Ziltener/Symplectic-vortices-on-the-complex-plane-and-quantum-cohomology}
F.~Ziltener, \emph{Symplectic vortices on the complex plane and quantum
  cohomology}, Ph.D. thesis, ETH Z\"urich, 2006.

\bibitem{Ziltener/The-invariant-symplectic-action-and-decay-for-vortices}
\bysame, \emph{The invariant symplectic action and decay for vortices}, J.
  Symplectic Geom. \textbf{7} (2009), no.~3, 357--376.

\bibitem{Ziltener/A-quantum-Kirwan-map:-Bubbling-and-Fredholm-theory-for-symplectic-vortices-over-the-plane}
\bysame, \emph{A quantum Kirwan map: Bubbling and Fredholm theory for symplectic vortices over the plane}, to appear in Mem.\,Amer.\,Math.\,Soc.

\end{thebibliography}

\end{document}